\documentclass[11pt,a4paper]{article}
\usepackage{amssymb,amsmath,amsthm}
\usepackage{color}
\usepackage{marginnote}
\usepackage{comment}
\usepackage{dutchcal}
\usepackage{enumitem}
\usepackage{bm}

\topmargin=0cm
\oddsidemargin0mm
\textheight23.5cm
\textwidth16cm
\headsep0mm
\headheight0mm
\parskip 2pt

\numberwithin{equation}{section}
\newtheorem{thm}{Theorem}[section]
\newtheorem{df}[thm]{Definition}
\newtheorem{prop}[thm]{Proposition}
\newtheorem{lem}[thm]{Lemma}
\newtheorem{cor}[thm]{Corollary}
\newtheorem{rem}[thm]{Remark}

\newtheorem{ex}[thm]{Example}

\newtheorem{hyp}[thm]{Hypotheses}
\let\oldproofname=\proofname
\renewcommand{\proofname}{\rm\bf{\oldproofname}}

\newcommand{\N}{\mathbb{N}}
\newcommand{\Z}{\mathbb{Z}}

\newcommand{\R}{\mathbb{R}}
\newcommand{\C}{\mathbb{C}}
\newcommand{\B}{\mathbb{B}}

\newcommand{\cA}{\mathcal{A}}
\newcommand{\cB}{\mathcal{B}}
\newcommand{\cC}{\mathcal{C}}

\newcommand{\cE}{\mathcal{E}}
\newcommand{\cF}{\mathcal{F}}
\newcommand{\cG}{\mathcal{G}}

\newcommand{\cK}{\mathcal{K}}
\newcommand{\cL}{\mathcal{L}}

\newcommand{\cO}{\mathcal{O}}
\newcommand{\cP}{\mathcal{P}}
\newcommand{\cQ}{\mathcal{Q}}
\newcommand{\cR}{\mathcal{R}}

\newcommand{\cX}{\mathcal{X}}

\renewcommand{\Re}{\mathop{\mathrm{Re}}}

\newcommand{\dd}{\,{\rm d}}
\newcommand{\D}{{\rm d}}
\renewcommand{\div}{\mathop{\mathrm{div}}\nolimits}
\newcommand{\curl}{\mathop{\mathrm{curl}}}

\newcommand{\1}{\mathbf{1}}
\newcommand{\rs}{\mathrm{rs}}
\newcommand{\spec}{\mathrm{spec}}
\newcommand{\wtB}{\widetilde{B}}
\newcommand{\wtL}{\widetilde{L}}
\newcommand{\TS}{\textstyle}
\newcommand{\DS}{\displaystyle}
\newcommand{\QED}{\mbox{}\hfill$\Box$}
\renewcommand{\:}{\thinspace :}

\newcommand{\vf}{\varphi}
\newcommand{\ve}{\varepsilon}

\newcommand{\om}{\omega}
\newcommand{\Xc}{X_c}
\newcommand{\TXc}{T_{\bar x} \Xc}
\newcommand{\irt}{\int_{\R^2}}
\newcommand{\ch}{\mathcal{h}}
\newcommand{\ck}{\mathcal{k}}

\begin{document}

\title{Arnold's variational principle and its application\\ to the 
stability of planar vortices}

\author{
{\bf Thierry Gallay and Vladim\'ir \v{S}ver\'ak}}


\maketitle

\begin{abstract}
We consider variational principles related to V.~I.~Arnold's stability
criteria for steady-state solutions of the two-dimensional incompressible 
Euler equation. Our goal is to investigate under which conditions 
the quadratic forms defined by the second variation of the associated
functionals can be used in the stability analysis, both for the Euler 
evolution and for the the Navier-Stokes equation at low viscosity.
In particular, we revisit the classical example of Oseen's vortex, 
providing a new stability proof with stronger geometric flavor.
Our analysis involves a fairly detailed functional-analytic study of the 
inviscid case, which may be of independent interest, and a careful 
investigation of the influence of the viscous term in the particular 
example of the Gaussian vortex.
\end{abstract}

\section{Introduction}\label{sec1}
In this paper we investigate the applicability of V.~I.~Arnold's geometric
methods to certain stability problems related to Navier-Stokes vortices at high
Reynolds number. Our main goal is a ``proof of concept'' that such applications
are possible, at least in simple cases, even though much of the geometric
structure behind the inviscid stability analysis does not survive the addition
of the viscosity term. In particular, we give a new proof of a known result
concerning the stability of Oseen's vortex as a steady state of the
Navier-Stokes equation in self-similar variables. We expect that the approach we
advertise here will be useful to tackle stability problems involving solutions
that are less symmetric and less explicit than the classical Oseen vortex. In
such cases one may not have good alternative methods for proving stability in
the presence of viscosity. Our investigation leads to a detailed study of the
quadratic forms naturally arising in Arnold's approach. Some of their
functional-analytic properties, which are established in the course of our
analysis, may be of independent interest.

\subsection{A finite-dimensional model}\label{ssec11}

Following V.~I.~Arnold's seminal paper~\cite{Arn1}, we first illustrate the issues we 
want to address in a model situation where the ``phase space'' is finite-dimensional. 
We consider the ordinary differential equation
\begin{equation}\label{I1}
  \dot x \,=\, b(x)\,, \qquad x \in \R^n\,,
\end{equation}
where $b$ is a smooth vector field in $\R^n$. Let us assume that $f,g_1,\dots, g_m
: \R^n \to \R$ are (sufficiently smooth) conserved quantities for the evolution~\eqref{I1}, 
with $m < n$. This means
\begin{equation}\label{I2}
  f'(x)b(x) \,=\, 0 \quad \hbox{and}\quad g_j'(x)b(x) \,=\,0\,\,,\quad x\in\R^n\,,
  \quad j=1,\dots, m\,,
\end{equation}
where we adopt the standard notation $f'(x)$ for the linear form given by
the first derivative of $f$ at $x$.  The situation we have ultimately in mind is
somewhat more specific: we can assume that the phase space $\R^n$ is equipped
with a Poisson structure, that system~\eqref{I1} is of the form
\begin{equation}\label{I3}
    \dot x \,=\, \{f,x\}\,,
\end{equation}
where $\{\,\cdot\,,\cdot\,\}$ denotes the Poisson bracket, and that $g_1,\dots,g_m$
are Casimir functions. This additional structure may of course be important in
many respects, but for our arguments here it will not play a big role. We can
therefore proceed in the context of~\eqref{I1} and~\eqref{I2}.

For any $c = (c_1,\dots,c_m) \in \R^m$, let us denote $\Xc = \{x\in\R^n\,;\,g_1(x)=c_1, 
\dots,g_m(x)=c_m\}$. We assume that, for some $c \in \R^m$, the function $f$ attains 
a {\em non-degenerate local maximum} on $\Xc$ at some point $\bar x\in\Xc$, and that 
the derivatives $g_1'(\bar x), \dots, g_m'(\bar x)$ are linearly independent. The 
stationarity condition at $\bar x$ gives the linear relation
\begin{equation}\label{I4}
  f'(\bar x) - \lambda_1g'_1(\bar x) - \ldots -\lambda_m g_m'(\bar x) \,=\, 0\,,
\end{equation}
for some Lagrange multipliers $\lambda_1,\dots,\lambda_m \in \R$. Moreover, 
the second order differential\footnote{We recall that the second order differential
of a function on a manifold is intrinsically defined at the points where the first
order differential vanishes.} of the function $f|_{\Xc}$ (the restriction of $f$ to 
$\Xc$) at $\bar x$ is given by the restriction to the tangent space $\TXc$ of the 
quadratic form
\begin{equation}\label{I5}
  \cQ \,=\, f''(\bar x) - \lambda_1g''_j(\bar x) - \ldots - \lambda_m g_m''(\bar x)\,,
\end{equation}
where we denote by $f''(\bar x)$ the quadratic form given by the Hessian of $f$
at $\bar x$. Our non-degeneracy assumption means that the restriction of the
form $\cQ$ to $\TXc$ is strictly negative definite. Now, let $B = b'(\bar x)$ be 
the $n \times n$ matrix corresponding to the linearization of~\eqref{I1} at 
the point $\bar x$, which is a steady state by construction. If we differentiate 
twice the relations~\eqref{I2} and use~\eqref{I4} together with $b(\bar x)=0$, 
we see that the evolution defined by the linearized equation $\dot\xi = B\xi$ 
leaves the form $\cQ$ invariant. In other words,
\begin{equation}\label{I7}
  \frac{\D}{\D t} \cQ(\xi,\xi) \,=\, \cQ(B\xi,\xi) + \cQ(\xi,B\xi) \,=\, 0\,,
  \qquad \forall\,\xi \in \R^n\,.
\end{equation}

The above structure\footnote{Pointed out in \cite{Arn1} in the form we use here,
  although in the finite-dimensional case these ideas go back to the founders of
  the analytical mechanics.} gives various options for the stability analysis of
the equilibrium $\bar x$ of~\eqref{I1}, depending on the index of the quadratic
form $\cQ$ in \eqref{I5}. Our assumptions readily imply that $\bar x$ is stable
in the sense of Lyapunov with respect to perturbations on the invariant
submanifold $\Xc$.  Moreover, since a neighborhood of $\bar x$ in $\R^n$ is
foliated by submanifolds of this form for nearby values of the parameter
$c = (c_1,\dots,c_m)$, one can show that $\bar x$ is in fact Lyapunov stable
with respect to small {\em unconstrained} perturbations. The perspective changes
qualitatively if we add to the vector field $b$ in \eqref{I1} a small
``dissipative'' perturbation, with the effect that the quantities $f$ and
$g_1, \dots,g_m$ are no longer exactly conserved under the perturbed
evolution. This is in the spirit of what we intend to do in the
infinite-dimensional case, when we consider the Navier-Stokes equation as a
perturbation of the Euler equation. Since the evolution no longer takes place on
the manifolds $\Xc$, the argument above leading to unconstrained Lyapunov
stability is not applicable anymore. However, in good situations, stability can
still be obtained if the quadratic form $\cQ$ in \eqref{I5} happens to be
negative definite not just on $\TXc$, but on larger subspaces as well, for
instance on the whole space $\R^n$. This is, roughly speaking, the idea we shall
pursue in the infinite-dimensional case, to study the stability of vortex-like
solutions of the Navier-Stokes equation.

To conclude with the (unperturbed) evolution \eqref{I1}, we emphasize that the
problem of determining the index of the form~\eqref{I5} is also very natural
from the viewpoint of the usual constrained maximization/minimization.  Clearly,
the ``Lagrange function''
\begin{equation}\label{I9}
  \cL(x) \,=\, f(x) -\lambda_1g_1(x) -\ldots-\lambda_m g_m(x)\,, \qquad 
  x \in \R^n\,,
\end{equation}
when considered on the whole space $\R^n$, has a critical point at $\bar x$ (and
a local maximum at $\bar x$ when restricted to $\Xc$). The form $\cQ$
will be strictly negative definite\footnote{Our use of the terms ``positive
  definite'' and ``negative definite'' allows for vanishing along some
  directions. When this is not the case, we speak of strictly positive definite 
  or strictly negative definite forms.} in the whole space $\R^n$ if and only 
if $\cL$ has a non-degenerate {\em unconstrained} maximum at $\bar x$. As is 
explained in Section~\ref{ssec24}, this is related to the concavity of the function
\begin{equation}\label{Mf}
  (c_1,\dots,c_m) \,\longmapsto\, M(c_1,\dots, c_m) ~:=~ \sup_{x \in \Xc} f(x)\,.
\end{equation}

\subsection{Arnold's geometric view of the 2d incompressible Euler equation}
\label{ssec12}

V.~I.~Arnold~\cite{Arn2,Arn3,Arnold-Khesin} carried out the analogue of the above
calculations in an infinite-dimensional setting to handle in particular the 2d
incompressible Euler equation $\partial_t \omega + u\cdot\nabla \omega = 0$,
where $u$ denotes the velocity of the fluid and $\omega = \curl u$ is the
associated vorticity. In this case the evolution is generated by the Hamiltonian
function, which represents the kinetic energy of the fluid, and the constraints
are given by the Casimir functionals
\begin{equation}\label{I10}
  \cC_\Phi(\om) \,=\, \int_{\Omega}\Phi(\om(x))\dd x\,,
\end{equation}
where $\Omega \subset \R^2$ is the fluid domain and $\Phi$ is an ``arbitrary''
function on $\R$. The idea of maximizing or minimizing the energy on the set of
vorticities satisfying suitable constraints has been widely used since then to
study the stability of steady-state solutions of the 2d Euler equations and
related fluid models, see \cite{Arnold-Khesin,Burton} and the references 
therein. 

Let us briefly recall the setup relevant for our goals here, making the
similarities with the finite-dimensional case as transparent as possible. 
Our main objects will be the following:

\begin{itemize}[leftmargin=*]

\item[(i)]
The {\em phase space} $\cP=\{\om\colon\R^2\to(0,\infty)\,;\,\hbox{$\om$ is smooth 
and decays ``sufficiently fast" at $\infty$}\}$. This is our infinite-dimensional
replacement for the manifold $\R^n$ in the finite-dimensional model. We restrict
ourselves to positive vorticity distributions defined on $\Omega = \R^2$,
because this is the appropriate framework to study the stability of radially
symmetric vortices in the whole plane. Admittedly, the definition above is somewhat
vague, but it serves only as a motivation and our results will be independent of
the vague parts of the definitions. There is a natural Poisson structure on
$\cP$ that is relevant for the Euler equation, see Section~\ref{ssecA5},
but here we only need some of its Casimir functionals (to be specified now).

\item[(ii)] 
The {\em Casimir functionals}, which play the role of the constraints $g_j$ 
in the finite-dimensional example. These are linear combinations of elementary 
functionals of the form 
\begin{equation}\label{ni1}
  h(a,\om) \,=\, |\{\om>a\}| \,=\, \irt \chi\bigl(\om(x)-a\bigr)\dd x\,,
  \qquad a > 0\,,
\end{equation}
where $\chi = \1_{(0,\infty)}$ is the indicator function of $(0,\infty)$.  Here
and in what follows, we denote by $|Z|$ the Lebesgue measure of any (Borel) set
$Z \subset \R^2$. Due to our assumptions on the vorticities in $\cP$, the
functions $a\mapsto h(a,\om)$ are finite and decreasing on $(0,\infty)$. In
general, they do not have to be continuous in $a$ but they will have this
property in the examples considered later. Similarly, the functionals
$\om\mapsto h(a,\om)$ may in general not be differentiable in every direction,
but they will be in our examples. It is useful to single out the quantity
\begin{equation}\label{M0}
  M_0(\om) \,=\, \irt \om(x)\,\dd x \,=\, \int_0^\infty h(a,\om)\dd a\,,
\end{equation}
which will be referred to as the ``mass'' of the vorticity distribution $\omega
\in \cP$. 

\item[(iii)] The {\em orbits} defined for any $\bar\om \in \cP$ by
\begin{equation}\label{ni2}
  \cO_{\bar\om} \,=\, \bigl\{\om\in\cP\,;\, h(a,\om) = h(a,\bar \om) \hbox{ for all }
  a\in(0,\infty)\bigr\}\,. 
\end{equation}
These subsets of the phase space are the analogues of the manifolds $\Xc$ defined 
by the constraints, and can be considered as a measure-theoretical replacement for 
the symplectic leaves
\[
  \cO_{\bar \om}^{\rm SDiff} \,=\, \bigl\{\om\in\cP\,;\,\om=\bar \om \circ \phi
  \hbox{ for some }\phi\in{\rm SDiff}\bigr\} \,\subset\, \cO_{\bar\om}\,,
\]
where ${\rm SDiff}$ denotes the group of area-preserving diffeomorphisms in $\R^2$. 
In contrast to $\cO_{\bar\om}^{\rm SDiff}$, the orbit $\cO_{\bar \om}$ does not carry 
any topological information about $\bar\om$, since $\om \in \cO_{\bar \om}$ as 
soon as $\om$ is a measure-preserving rearrangement of $\bar\om$. 

\item[(iv)]
The {\em Hamiltonian} (or energy functional) $E\colon\cP\to\R$, given by
\begin{equation}\label{Edef}
  E(\om) \,=\, -\frac 12\irt \psi(x)\om(x)\dd x \,=\, -\frac 1{4\pi} 
  \irt\irt\log|x-y|\,\om(x)\,\om(y)\dd x\dd y\,,
\end{equation}
where $\psi$ is the stream function defined by
\begin{equation}\label{psidef}
  \psi(x) \,=\, \frac{1}{2\pi} \int_{\R^2} \log|x-y|\,\omega(y)\dd y\,, \qquad 
  x \in \R\,.
\end{equation}
This is an analogue of the function $f$ in the finite-dimensional example. Note
that the usual kinetic energy defined by $\frac 12\irt|u|^2\dd x$, where
$u = \nabla^\perp\psi$, is infinite for $\om\in\cP$. However, both definitions
of the energy coincide when $\irt\om\dd x=0$, which is the case for instance if
$\omega$ is the difference of two vorticities in $\cP$ with the same mass. It
is also worth observing that the functional $E$ is not invariant under the
scaling transformation
$\om(x) \mapsto \om^{(\lambda)}(x) := \lambda^2\om(\lambda x)$ when
$M_0 = \irt\om\dd x\ne 0$. In fact, one can easily check that
\[
  E(\om^{(\lambda)}) \,=\, E(\om) + \frac {M_0^2}{4\pi}\,\log \lambda\,,
  \qquad \hbox{for all } \lambda > 0\,.
\]

\item[(v)]
The {\em conserved quantities} induced by Euclidean symmetries. These 
are the first order moments $M_1, M_2$ and the symmetric second order 
moment $I$ defined by 
\begin{equation}\label{MjIdef}
  M_j(\om) \,=\, \irt x_j\om(x)\dd x\,, \qquad j=1,2\,, \qquad
  I(\om) \,=\, \irt |x|^2\om(x)\dd x\,.
\end{equation}
Note that $M_1, M_2$ are associated to the translational symmetry, 
via Noether's theorem, and $I$ to the rotational symmetry.  
\end{itemize}

With these definitions, the Euler equation can be written in the form
$\partial_t \om = \bm{\{}E(\om),\om\bm{\}}$, where $\bm{\{}\,\cdot\,,\cdot\,
\bm{\}}$ denotes the Poisson bracket on $\cP$, see Section~\ref{ssecA5}.
Any steady state $\bar\om \in \cP$ is a critical point of the Hamiltonian $E$ on
the orbit $\cO_{\bar\om}$. Stability can be inferred when the restriction of the
energy $E$ to $\cO_{\bar\om}$ has a strict local extremum at $\bar\om$. In what
follows, we focus on the maximizers of the energy, which correspond to radially
symmetric vortices.

\subsection{The constrained maximization of the energy in $\cP$}
\label{ssec13}

Under our assumptions, it is easy to determine the maximizers of the Hamiltonian
$E$ under the constraints given by the functions $h(a,\om)$ for
$a\in(0,\infty)$. Indeed, for any $\om\in\cP$, the orbit $\cO_\om$ contains a
unique element $\om^*$ that is radially symmetric and nonincreasing in the
radial direction; this is the {\em symmetric decreasing rearrangement} of $\om$
\cite{LL}. The Riesz's rearrangement inequality then shows that $E(\om) \le 
E(\om^*)$ for all $\om \in \cO_{\om^*}$, with equality if and only if $\om$ is a 
translate of $\om^*$, see \cite[Lemma~2]{Carlen-Loss}. Of course $\om^*$
is a stationary solution of the Euler equation, which represents a radially
symmetric vortex with nonincreasing vorticity profile. Our main focus here will
be on the analogue of the quadratic form~\eqref{I5} for the steady state
$\bar\om = \om^*$.

First, the analogue of the Lagrange function~\eqref{I9} is
\[
  E(\om)-\int_0^\infty \Lambda(a)h(a,\om)\dd a \,=\, E(\om)-\int_0^\infty\Lambda(a)
  \left(\irt\chi\bigl(\om(x)-a\bigr)\dd x\right)\dd a\,,
\]
where the quantities $\Lambda(a)$ for $a\in(0,\infty)$ can be thought of as the 
Lagrange multipliers. The role of the discrete index $j$ in~\eqref{I9} is now played 
by the continuous parameter $a > 0$. Defining\footnote{The reason for the minus sign
  in~\eqref{ni5} will become clear later.}
\begin{equation}\label{ni5}
  \Phi(s) \,=\, -\int_0^\infty \Lambda(a)\chi(s-a)\dd a \,=\, -\int_0^s\Lambda(a)\dd a\,,
   \qquad s > 0\,,
\end{equation}
we see that the Lagrange function can also be expressed as
\begin{equation}\label{cEdef}
  F(\om) \,=\, E(\om)+\irt\Phi(\om(x))\dd x\,, \qquad \om \in \cP\,.
\end{equation}
This quantity will be referred to later as the ``free energy'' of the vorticity
distribution $\om$, a terminology that will be discussed in Section~\ref{ssec14}
below. 

Next, the analogue of the stationarity condition~\eqref{I4} at $\bar \om = \om^*$ is
$F'(\bar\omega) = 0$, where the linear form $\eta \mapsto F'(\bar\omega)\eta$ is 
defined for all $\eta \in T_{\bar\om}\cP$ by 
\[
  F'(\bar\om)\eta \,=\, \irt \Bigl(-\bar\psi(x) + \Phi'(\bar\om(x))\Bigr)\eta(x)\dd x\,,
  \qquad \bar\psi(x) \,=\, \frac1{2\pi}\irt\log|x-y|\,\bar\om(y)\dd y\,.
\]
Stationarity is thus equivalent to the relation $\bar\psi(x) = \Phi'(\bar\om(x))$ 
for all $x \in \R^2$. Finally the analogue of \eqref{I5} is the quadratic form 
$\eta \mapsto F''(\bar\om)[\eta,\eta]$, where
\[
  F''(\bar\om)[\eta,\eta] \,=\, \irt\left(-\vf\eta + \Phi''(\bar\om)\eta^2\right)\dd x\,,
  \qquad \vf(x) \,=\, \irt\frac1{2\pi}\log|x-y|\,\eta(y)\dd y\,.
\]
Using the relation $\nabla\bar\psi(x)=\Phi''(\bar\om(x))\nabla\bar\om(x)$, the second
variation can be rewritten in the form
\begin{equation}\label{ni9}
  F''(\bar\om)[\eta,\eta] \,=\, \irt\Bigl(-\vf\eta+\frac{\nabla\bar\psi}{\nabla\bar \om}
  \,\eta^2\Bigr)\dd x \,=\, 2E(\eta) + \irt \frac{\nabla\bar\psi}{\nabla\bar \om}
  \,\eta^2 \dd x\,,
\end{equation}
which is well-known from Arnold's work. Note that the ratio $\frac{\nabla\bar\psi}{
\nabla\bar \om}$ is meaningful only when the vector $\nabla\bar\om(x)$ is nonzero and 
collinear with $\nabla\bar\psi(x)$ for almost all $x\in\R^2$. This condition is 
obviously satisfied for all radially symmetric vortices with strictly decreasing 
vorticity profile.

\subsection{Overview of our results}\label{ssec14}

We are now able to describe more precisely the results of this paper. We
consider a general family of radially symmetric vortices $\bar\om \in \cP$ with
vorticity profile satisfying Hypotheses~\ref{omegahyp} below. Typical examples
are the ``algebraic vortex'' $\bar\om(x) = (1+|x|^2)^{-\kappa}$, where $\kappa > 1$
is a parameter, and the Oseen vortex for which $\bar\om(x) = e^{-|x|^2/4}$.

\subsubsection{Arnold's quadratic forms with and without constraints}

In Section~\ref{sec2}, we study in detail the quadratic form \eqref{ni9}
associated with the second variation of the Lagrange function \eqref{cEdef} at
the steady state $\bar\om \in \cP$, paying some attention to the
functional-analytic questions. First of all, while we know from the constrained
maximization result that the restriction of that form to the tangent space
$T_{\bar\om}\cO_{\bar\om}$ is negative, it is not clear if this restriction is
strictly negative definite, and if so in which function space. Our first main
result is Theorem~\ref{Jprop1}, where we show that, if two neutral
directions corresponding to translational symmetry are disregarded, the
restriction to $T_{\bar\om}\cO_{\bar\om}$ of the quadratic form \eqref{ni9} is
indeed strictly negative in an appropriate weighted $L^2$ space. The proof
ultimately relies on a variant of the Krein-Rutman theorem. 

We next investigate the index of the quadratic form \eqref{ni9} on a much larger
subspace, corresponding to perturbations $\eta \in T_{\bar\om}\cP$ satisfying
$\irt \eta(x)\dd x = 0$. In other words, we relax all constraints given by the
Casimir functions \eqref{ni1}, except for the mass $M_0$ defined in \eqref{M0},
which is still supposed to be constant. A priori there is no reason why the form
\eqref{ni9} should be negative definite in this larger sense, and indeed
Theorem~\ref{Jprop2} shows that this is not always the case. More precisely, we
show that negativity holds in the large sense if and only if the optimal
constant in some weighted Hardy inequality (where the weight function depends on
the vorticity profile $\bar\omega$) is smaller than $1$. While that condition is
not easy to check in general, we deduce from Corollary~\ref{Hardycor} that it is
fulfilled at least for the Oseen vortex, as well as for the algebraic vortex
$\bar\om(x) = (1+|x|^2)^{-\kappa}$ if $\kappa \ge 2$.

Although the mass constraint is rather natural, one may wonder if, for some
vorticity profiles, the quadratic form \eqref{ni9} can be negative definite for
all perturbations $\eta \in T_{\bar\om}\cP$; this question is briefly discussed
in Section~\ref{ssec23}. Finally, in Section~\ref{ssec24}, we give a fairly
explicit expression of the energy $E(\bar\om)$ in terms of the constraints
$h(a,\bar\om)$ for all $a > 0$, see Proposition~\ref{en-h}. One obtains in this
way an infinite-dimensional analogue of the quantity $M(c_1,\dots,c_n)$ defined
in \eqref{Mf}. Among other things, we justify our claim that the index of the
quadratic form \eqref{I5} is related to the concavity of the function \eqref{Mf}
(which is not new, of course), and we discuss a similar link in the
infinite-dimensional case.

As an aside, we mention that the stability of radially symmetric vortices for
the 2d Euler equations can also be studied using other conserved quantities,
such as the second order symmetric moment $I$ defined in \eqref{MjIdef}, see
e.g. \cite[Chapter~3]{MP}.

\subsubsection{The global maximizers of the free energy}

Let $\bar\psi$ be the stream function associated with the radially symmetric
vortex $\bar\om$. We have seen that the analogue of the Lagrange function is
given by the ``free energy'' \eqref{cEdef}, where the function $\Phi$ is defined, 
up to an additive constant, by the relation $\bar\psi(x) = \Phi'(\bar\om(x))$. 
The appellation ``free energy'' is partially justified by a (loose) analogy of
formula~\eqref{cEdef} with the classical thermodynamical expression for the
free energy
\begin{equation}\label{fe2}
  F \,=\, U-TS\,.
\end{equation}
Here $U$ is the internal energy (of a suitable system), $T$ is the temperature,
and $S$ is the entropy. In \eqref{cEdef}, the energy $E$ is analogous to $U$,
the integral $\int_{\R^2} \Phi(\om(x))\dd x$ is analogous to $S$, and one can
argue that it is reasonable to take $T = -1$. Of course, $T$ has nothing to do
with the real temperature of the fluid, but it should roughly be thought of as
the statistical mechanics temperature of our system in the sense of Onsager
\cite{Onsager}. We have not attempted to make this connexion rigorous, which
would take us in a different direction. 

In Section~\ref{sec3}, we consider vortices $\bar\om$ which are {\em global
  maximizers} of the free energy $F(\om)$ for all $\om \in \cP$ satisfying
$\irt \om\dd x = \irt\bar\om\dd x$. Such equilibria can be expected to have
strong stability properties, and may be useful for other purposes too.  Using a
direct approach, in the sense of the calculus of variations, we prove the
existence of global maximizers under fairly general assumptions on the function
$\Phi$, see Theorem~\ref{thm:globmax}. However, we do not have any efficient
method to determine if a given vortex $\bar\om$ is a global maximizer or not. A
necessary condition is of course that the quadratic form \eqref{ni9} be negative
on perturbations $\eta$ with zero mean, see Theorem~\ref{Jprop2}, but there is
no reason to believe that this is sufficient. Numerical evidence indicates that
the Oseen vortex is a global maximizer, and so are the algebraic vortices
$\bar\om(x) = (1+|x|^2)^{-\kappa}$ for $\kappa \ge 2$. In the particular case
$\kappa = 2$, maximality can be deduced from the logarithmic
Hardy-Littlewood-Sobolev inequality
\begin{equation}\label{onofri}
  \int_{\R^2}\int_{\R^2}\log\frac{1}{|x-y|}\,\om(x)\om(y)\dd x\dd y \,\le\, 
  \frac12\int_{\R^2} \om(x)\log(\om(x)) + \frac{1+\log(\pi)}{2}\,,
\end{equation}
which holds for all $\omega \in \cP$ with $M_0(\omega) = 1$, see \cite{Carlen-Loss}.
We mention that \eqref{onofri} is related to Onofri's sharp version of the
Moser-Trudinger inequality \cite{Onofri}. 

\subsubsection{The effect of viscosity --- application to Oseen vortices}

In Section~\ref{sec4}, we consider the stability of the Gaussian
vortex under the evolution defined by the Navier-Stokes equation $\partial_t
\omega + u\cdot\nabla \omega = \nu\Delta \omega$, where $\nu > 0$ is the
viscosity parameter. More precisely, we show that the quadratic form
\eqref{ni9} can be used to give an alternative proof of the local stability
results established in \cite{GW2}. We believe that a proof relying on the
second variation of the energy is of some interest, because the analogue of
the form \eqref{ni9} can be defined for more complicated vortex structures as
well, whereas the simpler approach in \cite{GW2} may be more difficult to
adapt.

The addition of the viscous term results in important new issues: the radial
vortices are no longer steady states and the orbits \eqref{ni2} are no longer
invariant under the evolution, so that much of the geometric picture underlying
the Euler equation is destroyed. The first problem is settled by introducing
self-similar variables and restricting ourselves to Oseen's vortex, which is a
stationary solution of the Navier-Stokes equation in these new coordinates.
Thanks to Theorem~\ref{Jprop2} and Corollary~\ref{Hardycor}, the quadratic form
\eqref{ni9} is positive definite for all perturbations with zero mean. 
This form is invariant under the evolution defined by the linearized Euler equation
at the vortex, but not under the Navier-Stokes evolution due to the viscous term and
the nonlinearity. The effect of viscosity is measured by a second quadratic
form, which happens to have a favorable sign, see Theorem~\ref{Qprop}. We do not
know if this is just a lucky coincidence, or if there are deeper reasons behind
that. In any event, this nice structure allows us to recover the local
stability result of \cite{GW2}, except for a slight difference in the choice
of the function space, see Theorem~\ref{Oseenprop}. Again, we emphasize that
the functional setting used in \cite{GW2} relies in an essential way on the radial
symmetry of Oseen's vortex, through the existence of conserved quantities such
as the moment $I$ in \eqref{MjIdef}, whereas our new approach can, at least in
principle, be adapted to more general situations, where other methods do not work.

\medskip\noindent{\bf Acknowledgments.} ThG is partially supported by the grant 
SingFlows ANR-18-CE40-0027 of the French National Research Agency (ANR). The 
research of VS is supported in part by grant DMS 1956092 from the National
Science Foundation.

\section{The second variation of the energy}\label{sec2}

In this section, we study the coercivity on various subspaces of the quadratic
form \eqref{ni9} which represents the second variation of the free energy
\eqref{cEdef} at a radially symmetric vortex $\bar\omega \in \cP$. We assume
that $\bar\omega(x) = \omega_*(|x|)$ for all $x \in \R^2$, and that the
vorticity profile $\omega_* : [0,+\infty) \to \R$ is a $C^2$ function with the
following properties\:

\begin{hyp}\label{omegahyp} The vorticity profile $\omega_* \in
C^2([0,+\infty))$ satisfies\\[1mm]
1) $\omega_*(0) > 0$, $\omega_*'(0) = 0$, and $\omega_*''(0) < 0$; \\[1mm]
2) $\omega_*'(r) < 0$ for all $r > 0$, and $\omega_*(r) \to 0$ as
$r \to +\infty$; \\[1mm]
3) there exists $C > 0$ and $\beta > 2$ such that $|\omega_*'(r)| \le C(1+r)^{-\beta-1}$ 
for all $r > 0$. 
\end{hyp}

\noindent
It follows in particular from 2), 3) that $\omega_*(r) = -\int_r^\infty 
\omega_*'(s)\dd s$, so that
\begin{equation}\label{om*prop}
  0 \,<\, \omega_*(r) \,\le\, \frac{C}{(1+r)^\beta} \quad
  \forall\, r > 0\,, \qquad \hbox{and}\quad
  0 \,<\, \int_0^\infty r\omega_*(r)\dd r \,<\, \infty\,. 
\end{equation}
Let $\bar\psi$ be the stream function associated with $\bar\omega$ as in
\eqref{psidef}. We have $\bar\psi(x) = \psi_*(|x|)$, where the stream profile
$\psi_* : [0,+\infty) \to \R$ satisfies
\begin{equation}\label{psi*def}
  \psi_*''(r) + \frac{1}{r}\,\psi_*'(r) \,=\, \omega_*(r)\,, \qquad 
  \hbox{hence}\quad  \psi_*'(r) \,=\, \frac{1}{r} \int_0^r s\omega_*(s) 
  \dd s\,, \quad \forall\, r > 0\,. 
\end{equation}
We introduce the weight function $A : [0,+\infty) \to \R$ defined by 
$A(0) = -\omega_*(0)/(2\omega_*''(0))$ and 
\begin{equation}\label{Adef}
  A(r) \,=\, -\frac{\psi_*'(r)}{\omega_*'(r)} \,=\, -\frac{1}{r\omega_*'(r)} 
  \int_0^r s\omega_*(s) \dd s\,, \quad r > 0\,. 
\end{equation}
Hypotheses~\ref{omegahyp} ensure that $A \in C^0([0,+\infty)) \cap C^1((0, +\infty))$.
Moreover, there exists a constant $C > 0$ such that $A(r) \ge C(1+r)^\beta$ for
all $r \ge 0$.

Let $\cA : \R^2 \to (0,\infty)$ be the radially symmetric extension of $A$ to
$\R^2$, namely $\cA(x) = A(|x|)$ for all $x \in \R^2$. We introduce the weighted 
$L^2$ space $X$ defined by
\begin{equation}\label{Xdef}
  X \,=\, \Bigl\{\omega \in L^2(\R^2)~;\, \|\omega\|_X^2 := 
  \int_{\R^2} \cA(x)|\omega(x)|^2 \dd x < \infty\Bigr\}\,,
\end{equation}
so that $\omega \in X$ if and only if $\cA^{1/2}\omega \in L^2(\R^2)$. Our assumptions
ensure that $\cA^{-1} \in L^1(\R^2)$, and using H\"older's inequality we easily
deduce that $X \hookrightarrow L^1(\R^2)$. We also consider the closed subspaces 
$X_1 \subset X_0 \subset X$ defined by 
\begin{equation}\label{X0def}
\begin{split}
  X_0 \,&=\, \Bigl\{\omega \in X~;\, \int_{\R^2} \omega(x) \dd x = 0\Bigr\}\,, \\
  X_1 \,&=\, \Bigl\{\omega \in X_0~;\, \int_{\R^2} \frac{x_j}{|x|}\,\omega(x) \dd x = 0
  \,\hbox{ for }\, j = 1,2\Bigr\}\,.
\end{split}
\end{equation}

We observe that, for any $\omega \in X$, the energy $E(\omega)$ introduced
in \eqref{Edef} is well defined. This a consequence of the following 
classical estimate, whose proof is reproduced in Section~\ref{ssecA1}
for the reader's convenience. 

\begin{prop}\label{prop:Edef}
Assume that $\omega \in L^1(\R^2)$ satisfies
\begin{equation}\label{omcond}
  \int_{\R^2} |\omega(x)|\,\log(1+|x|)\dd x \,<\, \infty\,, \quad \hbox{and}
  \quad \int_{\R^2} |\omega(x)|\,\log\bigl(1+|\omega(x)|\bigr)\dd x \,<\, \infty\,. 
\end{equation}
Then the last member in \eqref{Edef} is well defined, and the energy $E(\omega)$ 
satisfies the bound
\begin{equation}\label{EEbound}
  |E(\omega)| \,\le\, C \|\omega\|_{L^1} \biggl(\int_{\R^2} |\omega(x)|\,\log(2+|x|)
  \dd x + \int_{\R^2} |\omega(x)|\,\log_+\frac{|\omega(x)|}{\|\omega\|_{L^1}}
  \dd x\biggr)\,,
\end{equation}
where $\log_+(a) = \max\bigl(\log(a),0\bigr)$. If moreover $\int_{\R^2}\omega(x)
\dd x = 0$, then $E(\omega) = \frac12 \int_{\R^2}|u|^2\dd x$ where
\begin{equation}\label{BS}
  u(x) \,=\, \nabla^{\perp}\psi(x) \,=\, \frac{1}{2\pi}\int_{\R^2} \frac{(x-y)^\perp}{
  |x-y|^2}\,\omega(y)\dd y\,, \qquad x \in \R^2\,.
\end{equation}
 \end{prop}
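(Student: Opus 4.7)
The strategy is to split the logarithmic kernel, $\log|x-y| = \log_+|x-y| - \log_-|x-y|$, and control each contribution to \eqref{Edef} separately.

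For the far-field part, the elementary submultiplicative bound $\log_+|x-y| \le \log(1+|x|) + \log(1+|y|)$ together with Fubini gives
\[
  \irt\irt \log_+|x-y|\,|\omega(x)|\,|\omega(y)|\dd x\dd y \;\le\; 2\|\omega\|_{L^1}\irt |\omega(x)|\log(2+|x|)\dd x,
\]
which accounts for the first term on the right-hand side of \eqref{EEbound}.

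For the singular part, supported in $\{|x-y|\le 1\}$, I would invoke the Young-type inequality encoding the Legendre duality between $L\log L$ and the exponential Orlicz class,
\[
  ab \;\le\; a\log_+(a/c) + c(e^b - 1), \qquad a, b \ge 0,\; c > 0,
\]
applied with $a = |\omega(y)|$, $b = \log_-|x-y|$, and $c = \|\omega\|_{L^1}$. This is the crucial step, because $e^{\log_-|z|} - 1 = |z|^{-1} - 1$ is integrable on the unit disc in $\R^2$. Integrating in $y$ produces, uniformly in $x$, a pointwise bound of the form
\[
  \int \log_-|x-y|\,|\omega(y)|\dd y \;\le\; \irt |\omega(y)|\log_+\!\frac{|\omega(y)|}{\|\omega\|_{L^1}}\dd y \,+\, C\|\omega\|_{L^1},
\]
and then multiplying by $|\omega(x)|$ and integrating in $x$ produces the second term of \eqref{EEbound}. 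The same estimate, applied only to the inner integral, shows in passing that the stream function $\psi(x)$ in \eqref{psidef} is an absolutely convergent integral for every $x\in\R^2$. The delicate point is precisely the choice of the scaling parameter $c = \|\omega\|_{L^1}$: it is what produces the correct normalization $|\omega|/\|\omega\|_{L^1}$ inside $\log_+$ on the right-hand side of \eqref{EEbound}.

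For the second assertion, assume $\irt\omega\dd x = 0$. The mass cancellation removes the leading logarithmic term from the expansion $\log|x-y| = \log|x| + \log|1-y/x|$ for $|x|$ large, so that $\psi(x) \to 0$ at infinity with enough speed for $u = \nabla^\perp\psi$ to belong to $L^2(\R^2)$. Since $(2\pi)^{-1}\log|x|$ is the fundamental solution of $\Delta$ in $\R^2$, one has $\Delta\psi = \omega$, and Green's identity on a ball $B_R$ reads
\[
  \int_{B_R}|\nabla\psi|^2\dd x \;=\; -\int_{B_R}\psi\,\omega\dd x + \int_{\partial B_R}\psi\,\partial_n\psi\,\dd\sigma.
\]
Letting $R\to\infty$, the boundary term vanishes by the above decay, which yields the desired identity $E(\omega) = \frac{1}{2}\irt|u|^2\dd x$. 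If a direct verification of $u \in L^2(\R^2)$ under the hypothesis \eqref{omcond} proves awkward, one can argue by approximating $\omega$ by compactly supported smooth functions of zero integral, proving the identity in that setting, and passing to the limit using the continuity of both sides in the topology induced by \eqref{omcond}.
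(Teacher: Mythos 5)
Your proposal is correct and follows the same overall architecture as the paper's proof: the splitting of the kernel into the far-field part $\{|x-y|\ge 1\}$ (handled via $\log|x-y|\le\log(1+|x|)+\log(1+|y|)$ and Fubini) and the singular part $\{|x-y|<1\}$, and the derivation of the $L^2$ identity by first treating $\omega\in C^1_c$ with zero mean and then passing to the limit by density. The one genuinely different ingredient is your treatment of the singular part. The paper reduces it to a standalone inequality (Proposition~\ref{prop:logf}, estimate \eqref{logf3}), proved by splitting the unit ball into the sets where $|\omega(y)|\le M|x-y|^{-n/2}$ and where it exceeds that threshold; on the first set the kernel is integrated directly, on the second $\log\frac{1}{|x-y|}$ is traded for $\frac{2}{n}\log\frac{|\omega|}{M}$. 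You instead invoke the pointwise Young inequality $ab\le a\log_+(a/c)+c(e^b-1)$ with $c=\|\omega\|_{L^1}$, exploiting that $e^{\log_-|z|}-1=|z|^{-1}-1$ is integrable on the unit disc with an $x$-independent integral. Both arguments are elementary and yield the same normalization $|\omega|/\|\omega\|_{L^1}$ inside $\log_+$ (the leftover $C\|\omega\|_{L^1}^2$ term being absorbed since $\int|\omega|\log(2+|x|)\dd x\ge\log(2)\,\|\omega\|_{L^1}$); yours makes the $L\log L$ versus $\exp L$ duality explicit and is perhaps more systematic, while the paper's level-set splitting has the side benefit that Proposition~\ref{prop:logf} is reused elsewhere (notably its converse inequalities for radially decreasing functions in Section~\ref{sec3}). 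For the second assertion, your caution about verifying $u\in L^2$ directly under \eqref{omcond} alone is well placed, and the regularization fallback you describe is exactly the route the paper takes.
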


Since any $\omega \in X$ obviously satisfies \eqref{omcond}, we can consider 
the quadratic form $J$ on $X$ defined by $J(\omega) = \frac12\,\|\omega\|_X^2 
- E(\omega)$, or explicitly
\begin{equation}\label{JdefX}
  J(\omega) \,=\, \frac12 \int_{\R^2}\cA(x)\om(x)^2 \dd x \,+\, \frac1{4\pi}\int_{\R^2}
  \int_{\R^2}\log|x-y|\,\om(x)\om(y)\dd x\dd y\,, \quad \omega \in X\,.
\end{equation}
In the particular case where $\omega \in X_0$, namely when $\omega$ has zero 
average over $\R^2$, Proposition~\ref{prop:Edef} gives the alternative expression
\begin{equation}\label{Jdef}
  J(\omega) \,=\, \frac{1}{2}\int_{\R^2} \Bigl(\cA(x)\omega(x)^2 - 
  |u(x)|^2\Bigr)\dd x\,, \qquad \omega \in X_0\,,
\end{equation}
where $u$ is the velocity field associated with $\omega$ via the Biot-Savart 
formula \eqref{BS}. In view of \eqref{ni9} and \eqref{Adef}, we have 
$J = -\frac12F''(\bar\omega)$, where $F''(\bar\omega)$ is the second 
variation of the free energy \eqref{cEdef} at the equilibrium $\bar\omega$. 
It is clear that $X$ is the largest function space on which this second
variation is well defined. 

Our main goal in this section is to study the positivity and coercivity 
properties of the quadratic form $J$ on the spaces $X$, $X_0$, and $X_1$ 
defined in \eqref{Xdef}, \eqref{X0def}. To formulate our results, it is useful 
to decompose $X = X_\rs \oplus X_\rs^\perp$, where 
\begin{equation}\label{Xrsdef}
   X_\rs \,=\, \bigl\{\omega \in X\,;\, \omega \hbox{ is radially symmetric}
  \bigr\}\,, 
\end{equation}
and $X_\rs^\perp$ is the orthogonal complement of $X_\rs$ in the Hilbert space
$X$. Referring to the geometric picture of Section~\ref{ssec12}, we consider
$X_\rs^\perp$ as the tangent space to the orbit $\cO_{\bar\omega}$ at
$\bar\omega$.  This interpretation can be formally justified as follows: if
$\bar\omega \in X$ is smooth, the tangent space $T_{\bar\omega}\cO_{\bar\omega}$
is spanned by vorticities of the form $v\cdot\nabla \bar\omega$, where $v$ is a
(smooth and localized) divergence-free vector field, and using polar coordinates
as in Section~\ref{ssec21} below one verifies that such vorticities are indeed
orthogonal in $X$ to all radially symmetric functions. A contrario, since 
there is a one-to-one correspondence in $\cP$ between orbits and symmetric
decreasing rearrangements, it is clear that any radially symmetric perturbation
of the equilibrium $\bar\omega$ is transverse to the orbit $\cO_{\bar\omega}$.

It is easy to verify that $J(\omega_1 + \omega_2) = J(\omega_1) + J(\omega_2)$ 
when $\omega_1 \in X_\rs$ and $\omega_2 \in X_\rs^\perp$, so that the restrictions 
of $J$ to $X_\rs$ and $X_\rs^\perp$ can be studied separately. We first 
consider the tangent space $X_\rs^\perp$ in Section~\ref{ssec21}, and 
postpone the study of radially symmetric perturbations (with zero or 
nonzero mass) to Sections~\ref{ssec22} and \ref{ssec23}. 

\begin{rem}\label{psiprem}
Differentiating the first equality in \eqref{psi*def}, we see that the 
function $\phi = \psi_*'$ satisfies
\begin{equation}\label{psipeq}
  \bigl(L_0 \phi\bigr)(r) \,:=\, -\phi''(r) - \frac{1}{r}\,\phi'(r) + \frac{1}{r^2}
  \,\phi(r) \,=\, \frac{1}{A(r)}\,\phi(r)\,, \qquad r > 0\,,
\end{equation}
where $A(r) \ge C(1+r)^\beta$. Since $\phi > 0$, Sturm-Liouville theory asserts
that $\mu = 1$ is the lowest eigenvalue of the (generalized) eigenvalue problem
$L_0 \phi = \mu A^{-1} \phi$ on $\R_+$, with boundary conditions $\phi(0)
= \phi(+\infty) = 0$, see \cite{CL,Ha}.  This observation will be used later.
\end{rem}

\begin{rem}\label{Rem:Hyp}
Hypotheses~\ref{omegahyp} are sufficient for our results to hold, but can be 
relaxed in several ways. In particular, we can consider vortices that are not 
smooth at the origin, but the assumption that $\omega_*'(r) < 0$ for all 
$r > 0$ seems essential. This excludes vortices with compact support from our 
considerations, but as our motivation comes from applications to the Navier-Stokes
equations, we did not try to optimize our set of assumptions.
\end{rem}

\subsection{Positivity of the quadratic form $J$ on $X_\rs^\perp$}
\label{ssec21}

\begin{thm}\label{Jprop1}
Under Hypotheses~\ref{omegahyp}, the quadratic form $J$ defined by \eqref{Jdef} 
is nonnegative on the space $X_\rs^\perp \subset X_0$. Moreover, there exists 
a constant $\gamma > 0$ such that
\begin{equation}\label{Jcoercive}
  J(\omega) \,\ge\, \frac{\gamma}{2}\int_{\R^2} \cA(x)\omega(x)^2\dd x\,,
  \qquad \hbox{for all }\omega \in X_\rs^\perp \cap X_1\,.
\end{equation}
\end{thm}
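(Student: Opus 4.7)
The plan is to diagonalize the quadratic form $J$ using angular Fourier series. For $\omega \in X_\rs^\perp$, write $\omega(r,\theta) = \sum_{n\neq 0}\omega_n(r)e^{in\theta}$ in polar coordinates; since $X_\rs$ is precisely the $n = 0$ subspace in the Hilbert structure of $X$, the radial average of $\omega$ vanishes, so $\omega \in X_0$ automatically and $J(\omega)$ is given by \eqref{Jdef}. The stream function decomposes as $\varphi(r,\theta) = \sum_{n\neq 0}\varphi_n(r)e^{in\theta}$ with $L_n \varphi_n = \omega_n$, where $L_n := -\partial_r^2 - r^{-1}\partial_r + n^2/r^2$ (so $L_1$ is the operator denoted $L_0$ in Remark~\ref{psiprem}). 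Because $\cA$ is radial and $\int_{\R^2}|\nabla\varphi|^2\dd x = \int_{\R^2}\omega\varphi\dd x$, the form diagonalizes as $J(\omega) = \sum_{n\neq 0}J_n(\omega_n)$, with
\[
  2J_n(\omega_n) \,=\, 2\pi\int_0^\infty\bigl(A(r)|\omega_n(r)|^2 - \omega_n(r)\overline{\varphi_n(r)}\bigr)\,r\dd r\,.
\]

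Next, on $H := L^2(\R_+, r\dd r)$, set $v_n = A^{1/2}\omega_n$ and introduce the self-adjoint operator $T_n := A^{-1/2}L_n^{-1}A^{-1/2}$. Then $J_n(\omega_n) = \pi\langle v_n,(I - T_n)v_n\rangle_H$, so $J_n \ge 0$ iff $\|T_n\|_{H\to H}\le 1$, equivalently, the lowest generalized eigenvalue $\lambda_n$ of $L_n\phi = \lambda A^{-1}\phi$ (with $\phi(0) = \phi(+\infty) = 0$) satisfies $\lambda_n \ge 1$. For $n=1$, Remark~\ref{psiprem} gives exactly $\lambda_1 = 1$, with positive eigenfunction $\phi = \psi_*'$. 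For $n\ge 2$, the monotonicity $L_n = L_1 + (n^2-1)/r^2 > L_1$ in the quadratic-form sense yields $\lambda_n > \lambda_1 = 1$, and since $L_n$ is monotone increasing in $n$, the bound $\|T_n\|\le \|T_2\| < 1$ holds uniformly. This establishes $J(\omega)\ge 0$ on all of $X_\rs^\perp$.

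For the coercivity \eqref{Jcoercive}, the critical case is $n = \pm 1$, where $\|T_1\| = 1$. The Green's function of $L_1$ on $\R_+$ is the positive kernel $G_1(r,s) = \tfrac12\min(r,s)/\max(r,s)$, and the decay $A^{-1}(r)\le C(1+r)^{-\beta}$ with $\beta>2$ makes $T_1$ Hilbert-Schmidt on $H$ with a strictly positive integral kernel. A Krein-Rutman argument then identifies $\|T_1\| = 1$ as a simple isolated eigenvalue, with the one-dimensional kernel of $I - T_1$ spanned in the $\omega_1$-variable by $\omega_1 = A^{-1}\psi_*' = -\omega_*'$; back in $\R^2$ this kernel corresponds to the translational perturbations $-\partial_{x_j}\bar\omega$, $j=1,2$. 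Testing an $X_1$ constraint against this kernel direction, integration by parts gives $\int_0^\infty(-\omega_*'(r))\,r\dd r = \int_0^\infty \omega_*(r)\dd r > 0$, so $X_1$ excludes the kernel direction, i.e.~$X_1$ is transverse to $\ker J_1$. Splitting any $\omega_1 \in X_1$ along and across this one-dimensional kernel, using the Cauchy-Schwarz bound $|\int \omega_1\,r\dd r|\le \|A^{-1/2}\|_{L^2(r\dd r)}\|\omega_1\|_X$, and exploiting the spectral gap then yields a uniform estimate $J_1(\omega_1)\ge \gamma_1\|\omega_1\|_X^2$ on $X_1\cap\{n=\pm 1\}$. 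Taking $\gamma := \min(\gamma_1,1 - \|T_2\|) > 0$ completes the proof.

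The main obstacle is the spectral input in the $n = \pm 1$ mode: verifying the compactness of $T_1$ on $L^2(\R_+, r\dd r)$ (which requires the decay hypothesis $\beta > 2$ through the integrability of $A^{-1}(r)r$), checking that the boundary conditions at $r=0$ and $r=+\infty$ are consistent with the Green's function identification so that $\lambda_1 = 1$ really is the bottom of the generalized spectrum, and then upgrading the Krein-Rutman conclusion from simplicity of the top eigenvalue to a genuine spectral gap that can be combined with the transversality of $X_1$ to deliver a uniform coercivity constant. Once these pieces are in hand, the Fourier decomposition reduces the whole theorem to the one-dimensional Sturm-Liouville problem analyzed in Remark~\ref{psiprem}.
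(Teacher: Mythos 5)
Your proof follows essentially the same route as the paper's: angular Fourier decomposition, reduction to the conjugated Green's operators $T_k=\wtB_k$ on $L^2(\R_+,r\dd r)$, identification of $1$ as the top of the spectrum in the $|k|=1$ mode via the positive eigenfunction $\psi_*'$ and Sturm--Liouville theory, and coercivity on $X_1$ from the transversality of the constraint hyperplane to the translational kernel spanned by $-\omega_*'$. The only (harmless) variations are that you obtain compactness from a Hilbert--Schmidt bound rather than Fr\'echet--Kolmogorov, and treat $|k|\ge 2$ by operator monotonicity of $L_k$ rather than the pointwise kernel estimate $B_k[f]\le |k|^{-1}B_1[f]$.
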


\begin{proof}
We introduce polar coordinates $(r,\theta)$ in $\R^2$, and given any 
$\omega \in X_\rs^\perp$ we use the Fourier decomposition
\begin{equation}\label{omFourier}
  \omega\bigl(r\cos(\theta),r\sin(\theta)\bigr) \,=\, \sum_{k\neq 0}
  \omega_k(r)\,e^{ik\theta}\,, \qquad r > 0\,,\quad\theta \in \R/(2\pi\Z)\,,
\end{equation}
where the sum runs over all nonzero integers $k \in \Z \setminus \{0\}$. By
Parseval's relation we have
\begin{equation}\label{Jk}
\begin{split}
  \int_{\R^2} \cA(x)\omega(x)^2\dd x \,&=\, 2\pi \sum_{k\neq 0} 
   \int_0^\infty A(r)\,|\omega_k(r)|^2\,r\dd r\,, \\
  \int_{\R^2} |u(x)|^2\dd x \,=\, \int_{\R^2} \bigl(-\Delta^{-1}\omega\bigr)(x) 
  \,\omega(x)\dd x \,&=\, 2\pi \sum_{k\neq 0}
  \int_0^\infty B_k[\omega_k](r)\,\overline{\omega_k}(r)\,r\dd r\,,
\end{split}
\end{equation}
where $B_k$ is the integral operator on the half-line $\R_+$ defined by
the formula
\begin{equation}\label{Bkdef}
  \bigl(B_k[f]\bigr)(r) \,=\, \frac{1}{2|k|}\int_0^\infty \min\Bigl(
  \frac{r}{s},\frac{s}{r}\Bigr)^{|k|} f(s)\,s\dd s\,,   \qquad r > 0\,.
\end{equation}
Note that $g = B_k[f]$ is the unique solution of the ODE
\begin{equation}\label{gkdiff}
  -g''(r) - \frac{1}{r}\,g'(r) + \frac{k^2}{r^2}\,g(r) \,=\, f(r)\,,
  \qquad r > 0\,,
\end{equation}
which is regular at the origin and converges to zero at infinity. 

In view of \eqref{Jk}, the proof of Theorem~\ref{Jprop1} reduces
to the study of the one-dimensional inequality
\begin{equation}\label{ineqk}
  \int_0^\infty \bigl(B_k[f]\bigr)(r)\,\overline{f}(r)\,r\dd r \,\le\, C_k 
  \int_0^\infty A(r) |f(r)|^2 \,r\dd r\,,
\end{equation}
which depends on the angular Fourier parameter $k \in \Z\setminus\{0\}$.  More
precisely, the quadratic form $J$ is nonnegative on $X_\rs^\perp$ if and only
if, for all $k \neq 0$, inequality \eqref{ineqk} holds with some constant
$C_k \le 1$. In addition, we have the lower bound \eqref{Jcoercive} on the
subspace $X_\rs^\perp \cap X_1$ if and only if inequality \eqref{ineqk} holds
with a better constant $C_k \le 1-\gamma$ for all $k \neq 0$, assuming when
$|k| = 1$ that $f$ satisfies the additional condition
\begin{equation}\label{ortho1}
  \int_0^\infty f(r)\,r\dd r \,=\, 0\,.
\end{equation}

\smallskip
It remains to establish \eqref{ineqk} for all $k \in \Z\setminus\{0\}$. An 
inspection of the explicit formula \eqref{Bkdef} leads to the following simple 
observations\:\\[1mm] 
\null\hspace{8pt}1) The operator $B_k$ preserves positivity\: if $f \ge 0$, 
then $B_k[f] \ge 0$;\\[1mm]
\null\hspace{8pt}2) The following pointwise estimate holds\:
$|B_k[f]| \le B_k[|f|]$;\\[1mm]
\null\hspace{8pt}3) If $f \ge 0$ then $B_k[f] \le |k|^{-1} B_1[f]$.  

\medskip\noindent As a consequence, to show that $J$ is nonnegative on
$X_\rs^\perp$, it is sufficient to prove inequality \eqref{ineqk}
in the particular case where $|k| = 1$ and $f \ge 0$. Setting $h = A^{1/2} f$,
we write that inequality in the equivalent form
\begin{equation}\label{ineq1}
  \int_0^\infty \bigl(\wtB_1[h]\bigr)(r)\,h(r)\,r\dd r \,\le\, 
  C_1 \int_0^\infty h(r)^2 \,r\dd r\,,
\end{equation}
where $\wtB_1[h] = A^{-1/2} B_1[A^{-1/2}h]$. The following assertions play 
a crucial role in our argument\: 

\smallskip\noindent{\bf Claim 1\:} The operator $\wtB_1$ is {\em 
selfadjoint and compact} in the (real) space $Y = L^2(\R_+,r\dd r)$.\\
Indeed, take $h \in Y$ with $\|h\|_Y \le 1$, and denote $f = A^{-1/2}h$, 
$g = B_1[f] = A^{1/2}\wtB_1[h]$. Applying \eqref{Bkdef} with $|k| = 1$,
we see that
\[
  g(r) \,=\, \frac{1}{2r}\int_0^r A(s)^{-1/2}h(s)\,s^2\dd s \,+\, \frac{r}{2} 
  \int_r^\infty A(s)^{-1/2} h(s)\dd s\,, \qquad r > 0\,,
\]
and using H\"older's inequality we deduce
\begin{equation}\label{gbd1}
  |g(r)| \,\le\, \biggl\{\frac{1}{2r}\Bigl(\int_0^r A(s)^{-1}\,s^3\dd s
  \Bigr)^{1/2} +\, \frac{r}{2}\Bigl(\int_r^\infty A(s)^{-1}\,s^{-1}\dd s
  \Bigr)^{1/2}\biggr\}\,\|h\|_Y\,.
\end{equation}
As $A(r) \ge C(1+r)^\beta$, the right-hand side of \eqref{gbd1} is uniformly bounded,
so that $\|g\|_{L^\infty} \le C$ for some universal constant $C$.  On the other
hand, since $g$ satisfies the ODE \eqref{gkdiff} with $k = 1$ and
$f = A^{-1/2}h$, a standard energy estimate yields the bound
\begin{equation}\label{gbd2}
  \int_0^\infty \Bigl(g'(r)^2 + \frac{g(r)^2}{r^2}\Bigr)r\dd r
  \,=\, \int_0^\infty g(r) A(r)^{-1/2} h(r)\,r\dd r \,\le\, 
  \|g\|_{L^\infty} \|A^{-1/2}\|_Y \|h\|_Y \,\le\, C\,.
\end{equation}
In view of \eqref{gbd1} and \eqref{gbd2}, the Fr\'echet-Kolmogorov theorem
\cite[Thm~XIII.66]{RS4} implies that the function $\wtB_1[h] = A^{-1/2}g$ lies
in a compact set of $Y$, so that the operator $\wtB_1$ is compact. To prove that
$\wtB_1$ is selfadjoint, we take $h_1, h_2 \in Y$ and observe that
\[
  \int_0^\infty \bigl(\wtB_1[h_1]\bigr)(r)\,h_2(r)\,r\dd r \,=\, 
  \int_0^\infty \Bigl(g_1'(r)g_2'(r) + \frac{g_1(r)g_2(r)}{r^2}\Bigr)r\dd r\,,
\]
where $g_j = B_1[A^{-1/2}h_j]$ for $j = 1,2$. This expression is clearly 
a symmetric function of $(h_1,h_2)$. 

\smallskip\noindent{\bf Claim 2\:} The {\em spectral radius} of $\wtB_1$ is
equal to $1$, and $\lambda = 1$ is a {\em simple eigenvalue} of $\wtB_1$.\\
To see that, we first observe that $\lambda = 1$ is an eigenvalue of
$\wtB_1$ with a positive eigenfunction. Indeed, using \eqref{psi*def},
it is straightforward to verify that the function $g = \psi_*'$
satisfies the ODE \eqref{gkdiff} with $k = 1$ and $f =
-\omega_*'$. This shows that $B_1[-\omega_*'] = \psi_*'$, hence defining
$h = A^{-1/2}\psi_*' = -A^{1/2}\omega_*'$ we conclude that $\wtB_1[h] = h$.
On the other hand, assume that $\lambda > 0$ is an eigenvalue of $\wtB_1$, 
with eigenfunction $h \in Y$. Defining $f = A^{-1/2}h$, we see that 
$B_1[f] = \lambda Af$, so that the function $g = B_1[f]$ satisfies the
generalized eigenvalue problem
\begin{equation}\label{gkdiff2}
  -g''(r) - \frac{1}{r}\,g'(r) + \frac{1}{r^2}\,g(r) \,=\, 
  \mu\,\frac{g(r)}{A(r)}\,,\qquad r > 0\,,
\end{equation}
with the boundary conditions $g(0) = g(+\infty) = 0$, where $\mu = 1/\lambda$,
We already observed that $\mu = 1$ is the lowest eigenvalue of \eqref{gkdiff2},
see Remark~\ref{psiprem}.  It follows that $\lambda = 1$ is the largest
eigenvalue of the integral operator $\wtB_1$, whose spectral radius is therefore
equal to $1$. The argument above also shows that all positive eigenvalues of
$\wtB_1$ are simple, because \eqref{gkdiff2} is a second-order differential
equation.

\smallskip It is now a simple task to conclude the proof of
Theorem~\ref{Jprop1}.  Claims 1 and 2 imply the validity of inequality
\eqref{ineq1} with $C_1 = 1$. We deduce that \eqref{ineqk} holds for $|k| = 1$
with $C_k = 1$, and (invoking observation 3 above) for $|k| \ge 2$ with
$C_k \le 1/|k|$. This shows that the quadratic form $J$ is nonnegative on
$X_\rs^\perp$. On the other hand, if we assume that $\omega \in X_\rs^\perp\cap X_1$, 
the function $f = \omega_{\pm 1}$ satisfies condition \eqref{ortho1}, which means 
that $h = A^{1/2}f$ is orthogonal in $Y$ to the one-dimensional subspace $Y_0$ 
spanned by the positive function $\chi = A^{-1/2}$. It is clear that $Y_0^\perp$ 
does not contain any positive function, and in particular does not include the 
principal eigenfunction $h_0 = -A^{1/2}\omega_*'$ of the operator $\wtB_1$. 
So, applying  Lemma~\ref{LemCoercive} and Remark~\ref{remCoercive} below, 
we deduce that $\1 - \wtB_1 > 0$ on $Y_0^\perp$, which means that inequality 
\eqref{ineq1} holds on $Y_0^\perp$ with some constant $C_1' < 1$. Taking 
into account the other values of $k$, for which $C_k \le 1/|k| \le 1/2$, we 
conclude that estimate \eqref{Jcoercive} holds with $\gamma = \min(1/2,1{-}C_1')$.
\end{proof}

\begin{rem}\label{KRrem}
The Krein-Rutman theorem \cite[Thm~19.2]{Dei} asserts that the spectral radius 
of the compact and positivity-preserving operator $\wtB_1$ is an eigenvalue 
with positive eigenfunction. However, since the cone of positive functions 
has empty interior in $Y$, we cannot apply Theorem~19.3 in \cite{Dei} to conclude 
that $\wtB_1$ has a {\em unique} eigenvalue with positive eigenfunction. For this 
reason, we prefer invoking Sturm-Liouville theory to prove that $1$ is the 
largest eigenvalue of $\wtB_1$. 
\end{rem}

\begin{rem}\label{X1rem}
If $\beta > 4$ in Hypotheses~\ref{omegahyp}, the conclusion of Theorem~\ref{Jprop1}
remains valid, with the same proof, if the subspace $X_1$ is replaced by
\begin{equation}\label{X2def}
  \cX_1 \,=\, \Bigl\{\omega \in X_0~;\, \int_{\R^2} x_j\,\omega(x) \dd x = 0
  \,\hbox{ for }\, j = 1,2\Bigr\}\,.
\end{equation}
This possibility will be used in Section~\ref{sec4}. 
\end{rem}

\subsection{Positivity of the quadratic form $J$ on $X_\rs \cap X_0$}
\label{ssec22}

The quadratic form $J$ is not necessarily positive when considered 
on the subspace $X_\rs \cap X_0$, which consists of radially symmetric 
functions with zero mean. This question is related to the optimal 
constant in the weighted Hardy inequality 
\begin{equation}\label{Hardy}
  \int_0^\infty f(r)^2 \,\frac{\D r}{r} \,\le\, C_H  \int_0^\infty A(r)f'(r)^2 
  \,\frac{\D r}{r}\,, 
\end{equation}
where $f : [0,+\infty) \to \R$ is an absolutely continuous function with $f(0)
= f(+\infty) = 0$. Weighted Hardy inequalities are extensively studied in the
literature, see e.g. \cite[Section~1.3.2]{Maz}. In particular, it is known
that \eqref{Hardy} holds for {\em some} constant $C_H > 0$ if and only if the
positive function $A$ satisfies
\begin{equation}\label{Hardycond}
  \limsup_{r \to 0} \,\log\frac{1}{r}\int_0^r \frac{s}{A(s)}\dd s \,<\, \infty\,, 
  \quad \hbox{and} \quad 
  \limsup_{r \to +\infty} \,\log(r)\int_r^\infty \frac{s}{A(s)}\dd s \,<\, \infty\,. 
\end{equation}
Both conditions in \eqref{Hardycond} are fulfilled in our case, since $A(r) 
\ge C(1+r)^\beta$ for some $\beta > 2$. 

\begin{thm}\label{Jprop2}
Under Hypotheses~\ref{omegahyp}, the quadratic form $J$ defined by \eqref{Jdef} 
is coercive on $X_\rs \cap X_0$ if and only if Hardy's inequality \eqref{Hardy} 
holds for some $C_H < 1$. In that case we have
\begin{equation}\label{Jcoercive2}
  J(\omega) \,\ge\, \frac{\gamma}{2}\int_{\R^2} \cA(x)\omega(x)^2\dd x\,,
  \qquad \hbox{for all }\omega \in X_\rs \cap X_0\,,
\end{equation}
where $\gamma = 1-C_H$. 
\end{thm}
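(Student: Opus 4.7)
The plan is to reduce the quadratic form $J$, when restricted to $X_\rs\cap X_0$, to a one-dimensional quadratic expression that is manifestly controlled by the weighted Hardy inequality \eqref{Hardy}. The key change of variable uses the radial stream function.

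Concretely, given $\omega\in X_\rs\cap X_0$, write $\omega(x)=w(|x|)$ and let $\psi(x)=\Psi(|x|)$ be its stream function. Integrating $(r\Psi')'=rw$ suggests introducing the auxiliary profile
\begin{equation*}
  f(r) \,=\, r\Psi'(r) \,=\, \int_0^r s\,w(s)\dd s\,, \qquad r \ge 0\,,
\end{equation*}
so that $f'(r)=r w(r)$. The regularity of $\psi$ forces $f(0)=0$, and the zero-mean condition $\int_{\R^2}\omega\dd x=2\pi\int_0^\infty rw(r)\dd r=0$ is exactly the assertion $f(+\infty)=0$. Thus $\omega\mapsto f$ is a bijection between $X_\rs\cap X_0$ and the class $\cH$ of absolutely continuous functions $f$ on $[0,+\infty)$ with $f(0)=f(+\infty)=0$ and $\int_0^\infty A(r)f'(r)^2\,\frac{\dd r}{r}<\infty$; the inverse map is $w=f'/r$ (the vanishing and the integrability condition at the origin follow from the upper bound on $A^{-1}$ and Hypotheses~\ref{omegahyp}).

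Next I would rewrite both terms of $J$ in terms of $f$. Passing to polar coordinates,
\begin{equation*}
  \int_{\R^2}\cA(x)\omega(x)^2\dd x \,=\, 2\pi\int_0^\infty A(r)w(r)^2\,r\dd r
  \,=\, 2\pi\int_0^\infty A(r)\,f'(r)^2\,\frac{\dd r}{r}\,,
\end{equation*}
while Proposition~\ref{prop:Edef} and the identity $u=\nabla^\perp\psi$ (so that $|u(x)|=|\Psi'(|x|)|=|f(|x|)|/|x|$) give
\begin{equation*}
  2E(\omega) \,=\, \int_{\R^2}|u|^2\dd x \,=\, 2\pi\int_0^\infty \frac{f(r)^2}{r^2}\,r\dd r
  \,=\, 2\pi\int_0^\infty f(r)^2\,\frac{\dd r}{r}\,.
\end{equation*}
Subtracting, the formula \eqref{Jdef} for $J$ collapses to the clean one-dimensional expression
\begin{equation*}
  J(\omega) \,=\, \pi\int_0^\infty\Bigl(A(r)\,f'(r)^2 - f(r)^2\Bigr)\frac{\dd r}{r}\,.
\end{equation*}

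From this identity the equivalence is immediate. If Hardy's inequality \eqref{Hardy} holds with some constant $C_H<1$, then $J(\omega)\ge\pi(1-C_H)\int_0^\infty A f'^2\,\frac{\dd r}{r}=\tfrac{1-C_H}{2}\|\omega\|_X^2$, which is \eqref{Jcoercive2} with $\gamma=1-C_H$. Conversely, since every $f\in\cH$ arises from a unique $\omega\in X_\rs\cap X_0$ via the correspondence above, coercivity $J(\omega)\ge\tfrac{\gamma}{2}\|\omega\|_X^2$ rearranges to Hardy's inequality with constant $C_H\le 1-\gamma<1$.

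The main obstacle is minor and purely technical: one must check carefully that the map $\omega\mapsto f$ respects the natural function-space classes on both sides (finiteness of $\|\omega\|_X$ is equivalent to finiteness of $\int_0^\infty A(r)f'(r)^2\tfrac{\dd r}{r}$, the boundary value $f(+\infty)=0$ is indeed equivalent to zero mean, and $f(0)=0$ is automatic under Hypotheses~\ref{omegahyp}), and that Hardy's inequality \eqref{Hardy} can be posed on this exact class of admissible $f$. Once this bookkeeping is done, the statement follows directly from the rewriting above.
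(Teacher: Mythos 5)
Your proposal is correct and follows essentially the same route as the paper's proof: the substitution $f(r)=r\psi_0'(r)$, the identity $J(\omega)=\pi\int_0^\infty\bigl(A(r)f'(r)^2-f(r)^2\bigr)\frac{\D r}{r}$, and the direct two-way comparison with Hardy's inequality \eqref{Hardy} are exactly the steps taken in Section~\ref{ssec22}. The bookkeeping you flag (that the correspondence $\omega\leftrightarrow f$ matches the admissible classes on both sides) is likewise the only point the paper's argument relies on implicitly.
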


\begin{proof}
Given $\omega \in X_\rs \cap X_0$, we write $\omega(x) = \omega_0(|x|)$ and we
consider the stream function $\psi_0$ defined (up to an irrelevant additive
constant) by
\[
  \psi_0'(r) \,=\, \frac{1}{r}\int_0^r s\omega_0(s)\dd s \,=\, 
  -\frac{1}{r}\int_r^\infty s\omega_0(s)\dd s\,, \qquad r > 0\,.
\]
Defining $f(r) = r \psi_0'(r)$, we see that $f$ is absolutely continuous on
$\R_+$ with $f(0) = f(+\infty) = 0$. Moreover we have $\omega_0(r) = f'(r)/r$
and $u_0(r) := \psi_0'(r) = f(r)/r$ by construction. Finally the assumption
that $\omega_0 \in X_\rs \cap X_0$ ensures that $A^{1/2}\omega_0$ and $u_0$
belong to the space $Y = L^2(\R_+,r\dd r)$. We thus have
\begin{equation}\label{Jcoercive3}
  J(\omega) \,=\, \pi \int_0^\infty \Bigl(A(r)\omega_0(r)^2 - u_0(r)^2
  \Bigr)\,r \dd r \,=\, \pi \int_0^\infty \Bigl(A(r)
  f'(r)^2 - f(r)^2\Bigr)\frac{\D r}{r}\,,
\end{equation}
and using \eqref{Hardy} we conclude that \eqref{Jcoercive2} holds with
$\gamma = 1-C_H$. This proves that the quadratic form $J$ is coercive on
$X_\rs \cap X_0$ if $C_H < 1$. Conversely, if \eqref{Jcoercive2} holds for
some $\gamma > 0$,  it follows from \eqref{Jcoercive3} that inequality
\eqref{Hardy} is valid with $C_H = 1 - \gamma$.
\end{proof}

As is well known, the optimal constant in Hardy's inequality \eqref{Hardy} is
related to the lowest eigenvalue of a selfadjoint operator. A convenient
way of seeing this is to apply the change of variables $r = e^x$,
$h(x) = f(e^x)$, $B(x) = e^{-2x}A(e^x)$, which transforms \eqref{Hardy}
into the equivalent inequality
\begin{equation}\label{Hardy1}
  \int_\R h(x)^2 \dd x \,\le\, C_H  \int_\R B(x) h'(x)^2\dd x\,. 
\end{equation}
The integral in the right-hand side of \eqref{Hardy1} defines a closed
quadratic form on the Hilbert space $H = L^2(\R)$, with dense domain
$D = \{h \in H\,;\, B^{1/2}h' \in H\}$. Let
\[
  \B : D(\B) \longrightarrow H\,, \qquad h \,\longmapsto\,
  -\partial_x(B(x)\partial_x h)
\]
be the selfadjoint operator in $H$ associated with the quadratic form
\eqref{Hardy1} by Friedrich's representation theorem \cite{Ka}. Since $B(x) > 0$
for all $x \in \R$ we know that $\B$ is positive, and using the fact that $x^2
B(x)^{-1} \to 0$ as $|x| \to \infty$ it is easy to verify that $\B$ has compact
resolvent in $H$, hence purely discrete spectrum. The optimal constant in
$C_H$ in \eqref{Hardy1} is precisely the inverse of the lowest eigenvalue of
$\B$\:
\begin{equation}\label{CHformula}
  C_H \,=\, \max\bigl\{\lambda^{-1}\,;\, \lambda \in \spec(\B)\bigr\}\,.
\end{equation}

By Sturm-Louville's theory, if $\mu = C_H^{-1}$ is the lowest eigenvalue
of $\B$, there exists a positive eigenfunction $h \in D(\B)$ such that
$\B h = \mu h$. Setting $h(x) = f(e^x)$, we see that $f$ is a positive
solution of the ODE
\begin{equation}\label{fODE}
  -\partial_r\biggl(\frac{A(r)}{r}\,\partial_r f(r)\biggr) \,=\, \mu
  \,\frac{f(r)}{r}\,, \qquad r > 0\,,
\end{equation}
satisfying the boundary conditions $f(0) = f(+\infty) = 0$. Moreover
$\int_0^\infty A(r)f'(r)^2\dd r/r < \infty$ by construction. It is not easy to
guess from \eqref{fODE} whether $\mu$ is smaller or larger than $1$, but under
additional assumptions on the vortex profile it is possible to make another
change of variables which puts \eqref{fODE} into a form that allows
for a comparison with \eqref{psipeq}.

\begin{lem}\label{gODElem} If the function $A$ in \eqref{Adef} satisfies 
\begin{equation}\label{Anewcond}
  A \,\in\, C^2([0,+\infty))\,, \qquad\hbox{and}\qquad 
  \sup_{r \ge 1}\,\biggl(\frac{A(r)}{r^2} + \frac{A'(r)^2}{r^2A(r)}\biggr)
  \int_r^\infty\frac{s}{A(s)}\dd s \,<\, \infty\,,
\end{equation}      
then the function $g : [0,+\infty) \to \R$ defined by $g(r) = A(r)^{1/2}
f(r)/r$ is a solution of the ODE
\begin{equation}\label{gODE}
  -g''(r) - \frac{1}{r}\,g'(r) + \frac{1}{r^2}\,g(r) + V(r)g(r)  
  \,=\, \frac{\mu}{A(r)}\,g(r)\,,
\end{equation}
with boundary conditions $g(0) = g(+\infty) = 0$, where
\begin{equation}\label{Vdef}
  V(r) \,=\, \chi''(r) - \frac{1}{r}\,\chi'(r) + \chi'(r)^2\,, \qquad
  \hbox{and}\qquad \chi(r) \,=\, \frac{1}{2}\,\log(A(r))\,.
\end{equation}
\end{lem}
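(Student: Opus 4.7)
The plan is to carry out the substitution $f(r) = r A(r)^{-1/2} g(r)$ in the Sturm-Liouville equation \eqref{fODE} and show by a direct but careful computation that the resulting ODE for $g$ takes the stated form. The formula for $V$ in \eqref{Vdef}, which is reminiscent of the classical Liouville transformation, is precisely what one obtains when one conjugates a second-order operator in divergence form by a power of the coefficient, and the expectation is that all the algebra works out cleanly because of cancellations between terms involving $\chi' g'$.

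Concretely, setting $\chi = \frac12\log A$ so that $\chi' = A'/(2A)$ and $\partial_r A^{\pm 1/2} = \pm A^{\pm 1/2}\chi'$, I would first compute
\begin{equation*}
  f'(r) \,=\, A(r)^{-1/2}\Bigl[\bigl(1 - r\chi'(r)\bigr)g(r) + r g'(r)\Bigr]\,,
\end{equation*}
and then
\begin{equation*}
  \frac{A(r)}{r}\,f'(r) \,=\, \frac{A(r)^{1/2}}{r}\,g(r) \,-\, A(r)^{1/2}\chi'(r)\,g(r) \,+\, A(r)^{1/2} g'(r)\,.
\end{equation*}
Differentiating each of the three terms on the right, and noting that the two occurrences of $A^{1/2}\chi' g'$ coming respectively from $\partial_r(A^{1/2} g')$ and from $\partial_r(-A^{1/2}\chi' g)$ cancel one another, I expect to obtain
\begin{equation*}
  \partial_r\!\left(\frac{A}{r}f'\right) \,=\, A^{1/2}\!\left[g'' + \frac{1}{r}g' - \frac{1}{r^2}g + \frac{\chi'}{r}\,g - \chi''\,g - (\chi')^2\,g\right]\,.
\end{equation*}
Inserting this into \eqref{fODE}, using that $f/r = A^{-1/2}g$, and multiplying through by $-A^{-1/2}$, one arrives exactly at \eqref{gODE} with $V(r) = \chi''(r) - \chi'(r)/r + \chi'(r)^2$. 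The regularity needed for this manipulation is supplied by $A \in C^2([0,+\infty))$, which is the first part of \eqref{Anewcond}; this ensures $\chi \in C^2((0,+\infty))$ and hence $V$ is a well-defined continuous function on $(0,+\infty)$.

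For the boundary conditions on $g$, the behavior at the origin is controlled by $A(0) > 0$: since the Dirichlet-type integrability $\int_0^\infty A(r) f'(r)^2\,\D r/r < \infty$ combined with $A(0)>0$ forces $f(r) = o(r)$ near $0$, and since $A^{1/2}$ is bounded there, it follows that $g(0) = 0$. At infinity, the growth/decay condition built into the second part of \eqref{Anewcond} — namely the finiteness of $(A(r)/r^2 + A'(r)^2/(r^2 A(r)))\int_r^\infty s/A(s)\,\D s$ — is precisely designed so that the candidate $g = A^{1/2} f/r$ lies in $L^2(r\,\D r)$ with controlled behavior; together with the decay of $f$ inherited from the finite-energy eigenfunction $h$ on $\R$, this yields $g(+\infty)=0$. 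The main (and essentially only) obstacle in the argument is the bookkeeping in the derivative calculation above, where the cancellations must be verified term by term; once this is done, the rest is a routine verification of smoothness and boundary behavior.
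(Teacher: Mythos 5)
Your derivation of the ODE is correct and is exactly the paper's route: the Liouville-type conjugation $f = rA^{-1/2}g$ applied to the divergence-form equation \eqref{fODE}, with the cancellation of the two $A^{1/2}\chi' g'$ terms producing precisely $V=\chi''-\chi'/r+(\chi')^2$. Your treatment of the boundary condition at the origin is also sound: Cauchy--Schwarz applied to $f(r)=\int_0^r f'(s)\dd s$, together with $A(0)>0$, gives $f(r)=o(r)$ and hence $g(0)=0$.

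The one place where your argument is not yet a proof is the boundary condition at infinity. You assert that membership of $g$ in $L^2(r\dd r)$ ``with controlled behavior'', combined with the decay of $f$, yields $g(+\infty)=0$; but $L^2$ membership does not give pointwise vanishing, and the qualitative fact that $f(r)\to 0$ is not enough because the prefactor $A(r)^{1/2}/r$ may grow extremely fast (for the Gaussian vortex $A(r)\sim 4r^{-2}e^{r^2/4}$). The missing step is the quantitative bound obtained by writing $f(r)=-\int_r^\infty f'(s)\dd s$ and applying Cauchy--Schwarz:
\[
 |g(r)| \,\le\, \frac{A(r)^{1/2}}{r}\biggl(\int_r^\infty \frac{s}{A(s)}\dd s\biggr)^{1/2}
 \biggl(\int_r^\infty A(s)f'(s)^2\,\frac{\D s}{s}\biggr)^{1/2}\,,
\]
where the product of the first two factors is bounded precisely by the second hypothesis in \eqref{Anewcond}, and the last factor tends to zero as $r\to+\infty$ because $\int_0^\infty A(s)f'(s)^2\dd s/s<\infty$. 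This is what the condition $(A(r)/r^2)\int_r^\infty s/A(s)\dd s<\infty$ is for; once this line is written, your proof is complete and coincides with the paper's.
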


\begin{proof}
Since $f$ satisfies \eqref{fODE}, a direct calculation shows that $g(r) :=
A(r)^{1/2}f(r)/r$ is a solution of \eqref{gODE}, where the potential $V$ is
defined by \eqref{Vdef}. As for the boundary conditions, we recall that
$\int_0^\infty A(r)f'(r)^2\dd r/r < \infty$, hence $\int_0^\infty |f'(r)|\dd r
< \infty$. As $f(r) = \int_0^r f'(s)\dd s$, we have
\[
  \frac{|f(r)|}{r} \,\le\, \frac{1}{r}\biggl(\int_0^r \frac{s}{A(s)}\dd s
  \biggr)^{1/2}\biggl(\int_0^r A(s)f'(s)^2\,\frac{\D s}{s}\biggr)^{1/2}
  ~\xrightarrow[r \to 0]{}~ 0\,,
\]
which shows that $g(r) \to 0$ as $r \to 0$. Similarly, since $f(r) = -\int_r^\infty
f'(s)\dd s$, we have
\[
 |g(r)| \,\le\, \frac{A(r)^{1/2}}{r}\biggl(\int_r^\infty \frac{s}{A(s)}
  \dd s\biggr)^{1/2}\biggl(\int_r^\infty A(s)f'(s)^2\,\frac{\D s}{s}\biggr)^{1/2}
  ~\xrightarrow[r \to +\infty]{}~ 0\,,
\]
thanks to \eqref{Anewcond}. This concludes the proof. 
\end{proof}

\begin{rem}\label{rem:gadd}
The same arguments show that $r^2 g'(r) \to 0$ as $r \to 0$ and
$g'(r) \to 0$ as $r \to +\infty$, at least along appropriate sequences. 
\end{rem}
  
Let $L$ be the differential operator defined by
\begin{equation}\label{Ldef} 
  L \,=\, L_0 + V \,=\, -\partial_r^2 -\frac{1}{r}\,\partial_r + \frac{1}{r^2}
  + V(r)\,,
\end{equation}
where $L_0$ was introduced in \eqref{psipeq}. We know from \eqref{gODE} that
$L g = \mu A^{-1}g$, where $\mu = C_H^{-1}$ and $g$ is the positive function
defined in Lemma~\ref{gODElem}. On the other hand, we observed in
Remark~\ref{psiprem} that $L_0 \phi = A^{-1}\phi$, where $\phi = \psi_*'$
is also a positive function vanishing at the origin and at infinity.
Using Sturm-Liouville's theory, we easily deduce the following useful
criterion: 

\begin{cor}\label{Hardycor}
Under assumptions \eqref{Anewcond}, if the function $V$ defined by \eqref{Vdef}
does not change sign, the optimal constant in Hardy's inequality \eqref{Hardy}
satisfies $C_H \le 1$ if $V \ge 0$, and $C_H \ge 1$ if $V \le 0$;
moreover $C_H = 1$ only if $V$ is identically zero. 
\end{cor}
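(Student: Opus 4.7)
The plan is to cross-test the two generalized eigenvalue equations $L_0 \phi = A^{-1}\phi$ and $Lg = \mu A^{-1}g$ (where $\mu = C_H^{-1}$) against each other, and to extract a single algebraic identity that displays the sign of $\mu - 1$ directly in terms of $V$. Since $L_0$ is formally symmetric with respect to the weight $r\dd r$ on $\R_+$, two integrations by parts give
\begin{equation*}
  \int_0^\infty \bigl(g(r)\,L_0\phi(r) - \phi(r)\,L_0 g(r)\bigr)\,r\dd r \,=\, \Bigl[\,r\bigl(\phi(r) g'(r) - g(r)\phi'(r)\bigr)\,\Bigr]_0^\infty\,,
\end{equation*}
and the Wronskian-type boundary contribution will vanish thanks to the decay of $g,\phi,g',\phi'$ at $0$ and $+\infty$.

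Once this is in place, substituting $L_0\phi = A^{-1}\phi$ on the left and $L_0 g = Lg - Vg = \mu A^{-1}g - Vg$ on the right reduces the identity to
\begin{equation*}
  (\mu - 1)\int_0^\infty \frac{g(r)\,\phi(r)}{A(r)}\,r\dd r \,=\, \int_0^\infty V(r)\,g(r)\,\phi(r)\,r\dd r\,.
\end{equation*}
Because $g,\phi,A > 0$, the integral on the left is strictly positive, so the sign of $\mu - 1$ coincides with the sign of $\int_0^\infty r\,V\,g\,\phi\dd r$. Hence $V \ge 0$ gives $\mu \ge 1$, i.e.\ $C_H = 1/\mu \le 1$, while $V \le 0$ gives $\mu \le 1$, i.e.\ $C_H \ge 1$. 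In either case, equality $C_H = 1$ forces $\int_0^\infty r\,V\,g\,\phi\dd r = 0$; since $g\phi > 0$ pointwise and $V$ has a fixed sign, this requires $V \equiv 0$.

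The main technical obstacle I foresee is the careful verification that the boundary term $[r(\phi g' - g\phi')]_0^\infty$ really does vanish. At $r = 0$, Hypotheses~\ref{omegahyp} imply $\phi(r) = \psi_*'(r) = \tfrac{1}{2}\omega_*(0)\,r + O(r^3)$, with $\phi'(0) = \tfrac{1}{2}\omega_*(0)$ bounded; Remark~\ref{rem:gadd} supplies $r^2 g'(r) \to 0$ and $g(r) \to 0$, so $r\phi g' = O(r^2 g') \to 0$ and $r g\phi' = o(1)$. At $r = +\infty$, the asymptotics $\phi(r) \sim C/r$ and $\phi'(r) = O(1/r^2)$ follow at once from \eqref{psi*def} and Hypotheses~\ref{omegahyp} 3), while Remark~\ref{rem:gadd} gives $g'(r) \to 0$ and we know $g(+\infty)=0$; hence $r\phi g' \sim C g' \to 0$ and $r g \phi' = O(g/r) \to 0$. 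With these endpoint estimates established, the displayed identity makes the corollary immediate.
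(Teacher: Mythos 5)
Your argument is correct and is essentially the paper's own proof: both rest on the Wronskian (Sturm--Liouville) identity comparing $L_0\phi = A^{-1}\phi$ with $Lg = \mu A^{-1}g$, with the boundary terms killed by the asymptotics of $\phi=\psi_*'$ and of $g$ from Lemma~\ref{gODElem} and Remark~\ref{rem:gadd}; the paper phrases the conclusion as $\int_0^\infty \cR\phi\,r\dd r=0$ with $\cR=(\mu-1)A^{-1}g-Vg$, which is your displayed identity rearranged. The only nuance is that the limits in Remark~\ref{rem:gadd} are guaranteed only along suitable sequences, so the boundary term should be sent to zero along such sequences, as the paper does explicitly.
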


\begin{proof}
With the notations above, we have $L_0 \phi - A^{-1}\phi = 0$ and
\begin{equation}\label{STid}
  L_0 g - A^{-1}g \,=\, L g - \bigl(A^{-1}+V)g \,=\, \cR\,, \quad
  \hbox{where}\quad \cR \,=\, (\mu-1)A^{-1}g - Vg\,.
\end{equation}
Since $r\cR\phi = r\bigl(\phi(L_0 g) -g(L_0\phi)\bigr) = \frac{\D}{\D r}
\bigl(r(\phi'g- g'\phi)\bigr)$, we have for $r_1 > r_0 > 0$ the identity
\begin{equation}\label{STrel}
  \int_{r_0}^{r_1} \cR(r)\phi(r)r\dd r \,=\, r\Bigl(\phi'(r)g(r) - 
  g'(r)\phi(r)\Bigr)\,\Big|_{r=r_0}^{r=r_1}\,.
\end{equation}
Now, we let $r_0$ tend to $0$ and $r_1$ to $+\infty$ along appropriate
sequences, in such a way that the right-hand side of \eqref{STrel} converges
to zero. This possible, because we know that $\phi(r) = \cO(r)$ and
$\phi'(r) = \cO(1)$ as $r \to 0$, while $\phi(r) = \cO(1/r)$ and $\phi'(r) = 
\cO(1/r^2)$ as $r \to +\infty$; moreover the behavior of $g$ in these
limits is given in Lemma~\ref{gODElem} and Remark~\ref{rem:gadd}.  We thus
deduce from \eqref{STrel} that $\int_0^\infty \cR \phi r\dd r = 0$, which is
impossible if the function $\cR$ has a constant sign and is not identically
zero. So, if $V$ does not change sign, we must have $\mu \ge 1$ if $V \ge 0$ and
$\mu \le 1$ if $V \le 0$; moreover $\mu = 1$ is possible only if $V \equiv 0$.
Since $\mu = C_H^{-1}$, this gives the desired conclusion.
\end{proof}

\begin{rem}\label{Hardyscale}
As is easily verified, the optimal constant $C_H$ in Hardy's inequality
\eqref{Hardy} is unchanged if the function $A(r)$ is replaced by $\lambda^{-2}
A(\lambda r)$ for some $\lambda > 0$. This corresponds to a rescaling
of the vortex profile $\omega_*$.
\end{rem}

We now give two important examples where the sign of $C_H - 1$ can be determined. 

\medskip\noindent{\bf Example 1 : Algebraic vortex.} Given $\kappa > 1$, 
we define
\begin{equation}\label{algvort}
  \omega_*(r) \,=\, \frac{1}{(1+r^2)^\kappa}\,, \qquad 
  \psi_*'(r) \,=\, \frac{1}{2(\kappa{-}1)r}\Bigl(1 - \frac{1}{(1+r^2)^{\kappa-1}}
  \Bigr)\,.
\end{equation}
We have
\[
  A(r) \,=\, -\frac{\psi_*'(r)}{\omega_*'(r)} \,=\, \frac{1}{4\kappa(\kappa{-}1)r^2}
  \Bigl((1+r^2)^{\kappa+1}-(1+r^2)^2\Bigr)\,.
\]
When $\kappa = 2$ (Kaufmann-Scully vortex), inequality \eqref{Hardy} holds
with optimal constant $C_H = 1$, and is saturated for $f(r) = r^2/(1+r^2)^2$.
Indeed, it is easy to verify that $A(r) = (1+r^2)^2/8$ and $V(r) = 0$ in that 
particular case. Taking $g(r) = r/(1+r^2)$, a direct calculation  shows that 
$Lg = A^{-1}g$, so that $C_H = 1$. 

If $\kappa > 2$, we prove in Section~\ref{ssecA2} that the potential $V$ 
is positive, so that $C_H < 1$ by Corollay~\ref{Hardycor}. Finally, if 
$1 < \kappa < 2$, the potential $V$ is negative, implying that $C_H > 1$. 
Summarizing, for the family of algebraic vortices \eqref{algvort}, the 
quadratic form $J$ is coercive on $X_\rs \cap X_0$ if and only if $\kappa > 2$. 

\medskip\noindent{\bf Example 2 : Gaussian vortex.} We next consider
the Oseen vortex given by 
\begin{equation}\label{gaussvort}
  \omega_*(r) \,=\, e^{-r^2/4}\,, \qquad \psi_*'(r) \,=\,  \frac{2}{r}
  \Bigl(1 - e^{-r^2/4}\Bigr)\,, \qquad A(r) \,=\, \frac{4}{r^2}\Bigl(e^{r^2/4} 
  - 1\Bigr)\,.
\end{equation}
In that case too, the potential $V$ defined in \eqref{Vdef} is positive, see
Section~\ref{ssecA2}. By Corollary~\ref{Hardycor}, we conclude that $C_H < 1$,
so that the quadratic form $J$ is coercive on $X_\rs \cap X_0$. A numerical
calculation gives the approximate value $C_H \approx 0.57$, so that
$\gamma \approx 0.43$.

\subsection{The quadratic form $J$ without mass constraint}\label{ssec23}

In this short section we make a few remarks on the index of the quadratic form
\eqref{JdefX} when considered on the whole space $X$ defined by \eqref{Xdef},
and not only on the subspace $X_0$ given by \eqref{X0def}. Our first observation
is that, due to lack of scale invariance in this context, the form $J$ cannot be
positive on $X$ if the underlying steady state $\bar \omega$ is sharply
concentrated near the origin.  To see this, we consider the rescaled vortex
$\bar\omega_\lambda(x) = \lambda^2 \bar\omega(\lambda x)$ and the associated
weight function $\cA_\lambda(x) = \lambda^{-2}\cA(\lambda x)$, see
Remark~\ref{Hardyscale}.  We denote by $J_\lambda$ the quadratic form on $X$
corresponding to the steady state $\bar\omega_\lambda$, namely the form
\eqref{JdefX} where $\cA$ is replaced by $\cA_\lambda$. If $\omega \in X$ and
$\omega_\lambda(x) = \lambda^2 \omega(\lambda x)$, a simple calculation shows
that
\[
  J_\lambda(\omega_\lambda) \,=\, J(\omega) - \frac{M_0^2}{4\pi}\,\log(\lambda)\,,
  \qquad \hbox{where}\quad M_0 \,=\, \int_{\R^2} \omega(x)\dd x\,.
\]
If $M_0 \neq 0$, it is clear that $J_\lambda(\omega_\lambda) < 0$ when $\lambda > 0$ 
is sufficiently large, so that the quadratic form $J_\lambda$ cannot be positive
in this regime. 

We next argue that, for any vortex $\bar\omega$ satisfying Hypotheses~\ref{omegahyp}, 
the index of the quadratic form is well defined in the sense that $J$ has (at
most) a finite number of negative directions. In view of Theorem~\ref{Jprop1}, 
it is sufficient to evaluate $J$ on radially symmetric functions $\omega \in X_\rs$. 
The following expression will be useful: 

\begin{lem}\label{lem:radJ}
For any $\omega \in X_\rs$, we have
\begin{equation}\label{Jrad}
  J(\omega) \,=\, \pi \int_0^\infty A(r)\om(r)^2r\dd r \,+\, \pi\int_0^\infty\!\!
  \int_0^\infty \log\bigl(\max(r,s)\bigr) \,r\om(r)s\om(s)\dd r\dd s\,.
\end{equation}
\end{lem}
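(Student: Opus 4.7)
The plan is to substitute directly into the defining formula \eqref{JdefX} and compute each term in polar coordinates. Writing $\omega(x) = \omega(|x|)$ (with the usual slight abuse of notation), the diagonal term is immediate:
\[
\frac{1}{2}\int_{\R^2}\cA(x)\omega(x)^2\dd x \,=\, \pi\int_0^\infty A(r)\omega(r)^2\,r\dd r\,.
\]

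For the logarithmic double integral, I would pass to polar coordinates $x = re^{i\theta}$, $y = se^{i\phi}$ and apply the classical mean-value identity
\[
\frac{1}{2\pi}\int_0^{2\pi}\log\bigl|re^{i\theta}-se^{i\phi}\bigr|\dd\theta \,=\, \log(\max(r,s))\,,
\]
which follows either from the harmonicity of $z \mapsto \log|z - se^{i\phi}|$ on $|z|>s$ combined with its logarithmic behavior at infinity, or from the Fourier expansion $\log|re^{i\theta}-se^{i\phi}|^2 = 2\log(\max(r,s)) - \sum_{k\ge 1}\frac{2}{k}\bigl(\tfrac{\min(r,s)}{\max(r,s)}\bigr)^k\cos\bigl(k(\theta-\phi)\bigr)$. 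Since the integrand of the double integral is independent of $\phi$ after integrating in $\theta$, a second angular integration contributes a factor $2\pi$, yielding
\[
\frac{1}{4\pi}\int_{\R^2}\int_{\R^2}\log|x-y|\,\omega(x)\omega(y)\dd x\dd y \,=\, \pi\int_0^\infty\!\!\int_0^\infty\log(\max(r,s))\,r\omega(r)s\omega(s)\dd r\dd s\,,
\]
which when added to the diagonal contribution gives \eqref{Jrad}.

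The only delicate point is the justification of Fubini's theorem when carrying out the angular integration, since $\log|x-y|$ changes sign. I would handle this by splitting $\log|x-y| = \log_+|x-y| - \log_-|x-y|$ and checking absolute integrability of each piece separately. For the positive part, $\log_+|x-y|\le \log(2+|x|)+\log(2+|y|)$, which is integrable against $|\omega(x)\omega(y)|$ because $X \hookrightarrow L^1(\R^2)$ and elements of $X$ satisfy the logarithmic moment bound $\int|\omega|\log(2+|x|)\dd x < \infty$ (this is inherited from the weight $\cA$, which grows like $(1+|x|)^\beta$ with $\beta > 2$). For the negative part, $\log_-|x-y|$ is locally integrable on $\R^2$, so its contribution is controlled using $\omega \in L^2(\R^2) \cap L^1(\R^2)$ by Young's convolution inequality. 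With absolute integrability established, Fubini permits the angular integration in either order, and the identity follows. This is essentially the same ingredient that underlies Proposition~\ref{prop:Edef}, so no new analytic input is really needed.
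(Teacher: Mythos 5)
Your proposal is correct and follows essentially the same route as the paper: reduce the quadratic term to polar coordinates and invoke the classical angular mean-value identity $\frac{1}{2\pi}\int_0^{2\pi}\log|re^{i\theta}-se^{i\phi}|\,\mathrm{d}\theta=\log\max(r,s)$, justified by harmonicity/analyticity off the circle $|z|=r$ together with the logarithmic behavior at infinity. Your explicit Fubini check is a welcome extra detail, but it is already covered implicitly by Proposition~\ref{prop:Edef}, which guarantees absolute convergence of the energy integral for $\omega\in X$.
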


\begin{proof}
Here and below, with a slight abuse of notation, we consider any $\omega \in X_\rs$ 
as a function of the one-dimensional variable $r = |x|$. For such vorticities, 
the first integral in \eqref{JdefX} obviously gives the first term in \eqref{Jrad}, 
so it remains to establish the following expression of the energy: 
\begin{equation}\label{Erad}
  E(\omega) = - \pi\int_0^\infty\!\!\int_0^\infty \log\bigl(\max(r,s)\bigr)\, 
  r\om(r)\,s\om(s)\dd r\dd s\,, \qquad \omega \in X_\rs\,.
\end{equation}
To this end, we introduce polar coordinates $x = r\,e^{i\theta}$, $y = s\,e^{i\zeta}$
to compute the right-hand side of \eqref{Edef}, and we use the identity
\begin{equation}\label{anglesid}
  \int_0^{2\pi}\!\!\int_0^{2\pi} \log|re^{i\theta}-se^{i\zeta}|\dd\theta\dd \zeta \,=\, 
  2\pi \int_0^{2\pi}\log|re^{i\theta}-s|\dd\theta  \,=\, 4\pi^2 \log\bigl(
  \max(r,s)\bigr)\,.
\end{equation}
The formula \eqref{anglesid} is well known and can be derived in many ways. For
example, assuming that $r$ is a fixed positive number, we interpret the last
integral as a function of $s\in\C$. This expression obviously depends only on
$|s|$, is continuous everywhere, and is analytic both inside and outside of the circle
$|s|=r$. Inside the circle it has to be constant and outside the circle it
coincides with the potential of a point particle of mass $2\pi$ located at the
origin, which is $2\pi\log|s|$. This gives \eqref{anglesid}, and \eqref{Erad}
follows.
\end{proof}

Applying the change of variables $w(r) = \om(r)A(r)^{1/2}$, so that $w \in 
Y = L^2(\R_+,r\dd r)$ when $\omega \in X_\rs$, the formula \eqref{Jrad} becomes
\begin{equation}\label{Jrad2}
  \frac{1}{\pi}\,J(\omega) \,=\, \int_0^\infty w(r)^2r\dd r \,-\, \int_0^\infty
  \!\!\int_0^\infty \ck(r,s)w(r)w(s) rs\dd r\dd s\,,
\end{equation}
where $\ck(r,s) = -\log\bigl(\max(r,s)\bigr)\,A(r)^{-1/2}\,A(s)^{-1/2}$.  Under
Hypotheses~\ref{omegahyp}, we have the lower bound $A(r) \ge C(1+r)^\beta$ for
some $\beta > 2$, which implies that
\[
  \int_0^\infty \!\!\int_0^\infty \ck(r,s)^2 \,rs \dd r\dd s\,<\, \infty\,.
\]
This means that the right-hand side of \eqref{Jrad2} is the quadratic form in
$Y$ associated with a selfadjoint operator of the form $\1 - \cK$, where $\1$
is the identity and $\cK$ is a Hilbert-Schmidt perturbation. By compactness,
this operator has (at most) a finite number of negative eigenvalues, which
means that the index of the quadratic form $J$ on $X$ is well defined.

The eigenvalues of $\cK$ can also be thought of as eigenvalues of the quadratic
form \eqref{Erad} with respect to the reference form $\omega \mapsto \pi
\int_0^\infty A(r)\om(r)^2r\dd r$. As is easily verified, if $\lambda$ is 
such an eigenvalue, the corresponding eigenfunction $\omega$ satisfies
\begin{equation}\label{psieig}
  -\psi(r) \,=\, \lambda A(r)\om(r)\,,\qquad \hbox{where}\quad 
  \psi(r) \,=\, \int_0^\infty \log\bigl(\max(r,s)\bigr)\,s\omega(s) 
  \dd s\,.  
\end{equation}
Since $\omega(r) = \psi''(r) + \frac{1}{r}\,\psi'(r)$, the first relation
in \eqref{psieig} is an ordinary differential equation for the stream
function $\psi : \R_+\to\R$, to be solved with the boundary conditions
\[
  \psi'(0) \,=\, 0\,, \qquad \hbox{and}\qquad \lim_{r \to +\infty} \Bigl(\psi(r)
  \log(2r) - \psi(2r)\log(r)\Bigr) \,=\, 0\,,
\]
which can be deduced from the expression of $\psi$ in \eqref{psieig}. 
For the Lamb-Oseen vortex~\eqref{gaussvort} a numerical computation gives the
largest eigenvalue $\lambda\approx 0.7127$, thus suggesting that the form $J$ is
strictly positive definite on the whole space $X_\rs$ in that case.
In contrast, the largest eigenvalue for the algebraic vortices~\eqref{algvort} 
seems to exceed the threshold value $1$, indicating that for those 
vortices the form $J$ is not positive definite without additional constraints 
on $\om$.

\subsection{The maximal energy as a function of the constraints}\label{ssec24}

In Section~\ref{ssec11} we considered the classical problem of maximizing a
function $f : \R^n \to \R$ under a family of constraints of the form
$g_1 = c_1, \dots, g_m = c_m$, where $g_1, \dots, g_m : \R^n \to \R$. Given
$c = (c_1,\dots,c_m) \in \R^m$, we recall the notation
$\Xc = \{x\in\R^n\,;\,g_1(x)=c_1,\dots,g_m(x)=c_m\}$.  Assuming that $f$ reaches
a non-degenerate maximum on $\Xc$ at some point $\bar x \in \Xc$ where the
first-order derivatives $g_1'(\bar x), \dots, g_m'(\bar x)$ are linearly
independent, we introduced the quadratic form $\cQ$ defined by \eqref{I5}, which
is the second order differential of the Lagrange function \eqref{I9} at
$\bar x$. In the present section, we are interested in the index of the form
$\cQ$ on larger subspaces than $\TXc$. As was already mentioned, this question
is closely related to concavity properties of the function $M$ defined by
\eqref{Mf} or, almost equivalently, to convexity properties of the set
$S=\{(g_1(x),\dots,g_m(x),f(x))\,;\,x\in\R^n\}\subset\R^{m+1}$ near its ``upper
boundary''.

The situation becomes particularly transparent if we use adapted coordinates
which, as it turns out, have a fairly complete analogy in 2d Euler case. 
Let us assume that we can introduce new coordinates $(c_1,\dots,c_m,y_1,\dots,y_{n-m})$ 
in $\R^n$ such that, as before, $c_1,\dots,c_m$ are the values of the 
constraints $g_1,\dots,g_m$, and the additional coordinates $y_1,\dots,y_{n-m}$ 
are chosen so that the points having coordinates $(c_1,\dots,c_m,0,\dots,0)$ 
are those where $f$ attains its maximum on $\Xc$.\footnote{In a non-degenerate 
situation, the local existence of such a coordinate system is clear by standard 
arguments, but globally the situation can, of course, be more complicated.}
Denoting $M(c_1,\dots,c_m) = f(c_1,\dots,c_m,0,\dots,0)$ as in \eqref{Mf},
one verifies that
\begin{equation}\label{Mcj} 
  \frac{\partial M}{\partial c_j}(c_1,\dots,c_m) \,=\, \lambda_j\,, 
  \qquad j = 1,\dots,m\,,
\end{equation}
where $\lambda_1, \dots, \lambda_m$ are the Lagrange multipliers introduced 
in \eqref{I4}. Moreover the extremality condition on $\Xc$ implies that
\[
  \frac{\partial f}{\partial y_k}(c_1,\dots,c_m,0,\dots,0) \,=\, 0\,, \qquad
  k=1,\dots,n-m\,.
\]
We infer that
\begin{equation}\label{D2f} 
  D^2f(c_1,\dots,c_m,0,\dots,0) ~=~ \begin{pmatrix} \left(\frac{\partial^2f}{
  \partial c_i\partial c_j}\right)_{i,j=1}^m &  0\\ 0 & \left(\frac{\partial^2 f}{
  \partial y_k\partial y_\ell}\right)_{k,\ell=1}^{n-m}\end{pmatrix}\,,
\end{equation}
where all derivatives are evaluated at the point $(c_1,\dots,c_m,0,\dots,0)$. 
The first submatrix in the right-hand side of \eqref{D2f} is precisely the 
Hessian of $M$, and the second submatrix is always negative definite, due to our 
assumption that $f$ reaches a maximum at $(y_1,\dots,y_{n-m}) = (0,\dots,0)$ 
for any fixed value of $c_1,\dots,c_m$. So we conclude that the quadratic form 
$\cQ$ defined in \eqref{I5} is negative definite at $\bar x$ if and only if 
the Hessian of $M$ is negative definite at $(c_1,\dots,c_m)$, where $c_j = 
g_j(\bar x)$ for $j = 1, \dots,m$. 

Another interesting object is the function
\begin{equation}\label{l2}
\begin{split}
  N(\lambda_1,\dots,\lambda_m) \,&=\, \sup_{x \in \R^n}\Bigl(f(x)-\lambda_1g_1(x) -
  \ldots -\lambda_m g_m(x)\Bigr) \\
  \,&=\, \sup_{c \in \R^m}\Bigl(M(c_1,\dots,c_m) - \lambda_1c_1 - \ldots
  -\lambda_m c_m\Bigr)\,,
\end{split}
\end{equation}
which is the {\em Legendre transform} of $M$. Under appropriate assumptions, 
the main one being the concavity of $M$, this quantity is well defined and
the relation \eqref{Mcj} can be inverted (at least locally) via the formula  
\begin{equation}\label{l30}
  c_j \,=\, -\frac{\partial N}{\partial\lambda_j}(\lambda_1,\dots,\lambda_m)\,,
  \qquad j = 1,\dots,m\,. 
\end{equation}

\medskip We now return to the infinite-dimensional framework of the 2d Euler
equation, with the manifold $\R^n$ replaced by the phase space $\cP$ introduced
in Section~\ref{ssec12}, the function $f$ replaced by the energy $E$ in
\eqref{Edef}, the constraints $g_j$ replaced by the Casimir functionals
$h(a,\om)$ in~\eqref{ni1}, and the submanifolds $\Xc$ replaced by the orbits
$\cO_{\om}$ in~\eqref{ni2}. In that case we have
\begin{equation}\label{max3}
  \max_{\om \in\cO_{\bar \om}}E(\om) \,=\, E(\bar\om^*)\,,
\end{equation}
where, as before, $\bar\om^*$ denotes the symmetric decreasing rearrangement of
an element $\bar\om \in \cP$. As $\cO_{\bar\om}$ is characterized in terms of 
the functionals $h(a,\om)$ defined in \eqref{ni1}, the energy of the maximizer 
$\bar\om^*$ in $\cO_{\bar\om}$ can also be expressed in terms of the constraint 
function $a\to h(a,\bar\om)$. It turns out that the representation formula is
quite explicit. 

\begin{prop}\label{en-h}
Given $\bar\om\in \cP$, we define $\ch(a) = \pi^{-1} h(a,\bar\om) = \pi^{-1}
|\{\bar\om>a\}|$ for any $a > 0$. Then
\begin{equation}\label{energy-via-h}
  \cE(\ch) ~:= \!\max_{\genfrac{}{}{0pt}{1}{\om\in\cP}{h(\cdot,\om)=\pi\ch}} \!E(\om) 
  \,=~ \frac{\pi}{8} \int_0^m\!\!\int_0^m L(\ch(a),\ch(b))\dd a\dd b \,+\, 
  \frac1{8\pi}M_0^2\,, 
\end{equation}
where $m=\max\bar \om$, $M_0=\irt\bar\om\dd x=\pi\int_0^m \ch(a)\dd a$, and
\begin{equation}\label{nLdef}
  L(R,S) \,=\, -RS\log\max(R,S)-\frac 12\min(R,S)^2\,.
\end{equation}
\end{prop}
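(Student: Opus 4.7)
The strategy is to identify the maximizer explicitly and then compute its energy by an elementary quadrature.

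First, by Riesz's rearrangement inequality (recalled in Section~\ref{ssec13}), the constrained maximum is attained at the symmetric decreasing rearrangement $\bar\omega^* = \omega_*(|\cdot|)$, which shares the distribution function of $\bar\omega$. Since $|\{\bar\omega^* > a\}| = \pi\ch(a)$ and $\omega_*$ is non-increasing, one has $\omega_*(r) > a$ if and only if $r < r(a) := \sqrt{\ch(a)}$, which yields the layer-cake representation
\[
  \omega_*(r) \,=\, \int_0^m \1_{r < r(a)}\,\dd a\,, \qquad r > 0\,.
\]

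Second, I apply the radial energy formula~\eqref{Erad} and swap the order of integration (justified by the decay of $\bar\om$, which ensures integrability of $r\omega_*(r)$ together with its logarithmic moments) to obtain
\[
  E(\bar\omega^*) \,=\, -\pi \int_0^m\!\!\int_0^m I\bigl(r(a),r(b)\bigr)\,\dd a\,\dd b\,, \qquad I(R,S) := \int_0^R\!\!\int_0^S \log\max(r,s)\,rs\,\dd r\,\dd s\,.
\]

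Third, a direct computation (splitting the inner $s$-integral at $s=r$ and using $\int s\log s\,\dd s = \tfrac{s^2}{2}\log s - \tfrac{s^2}{4}$) gives, for $R \le S$,
\[
  I(R,S) \,=\, \frac{R^2 S^2}{4}\,\log S \,-\, \frac{R^2 S^2}{8} \,+\, \frac{R^4}{16}\,.
\]
Substituting $R^2 = \min(\ch(a),\ch(b))$, $S^2 = \max(\ch(a),\ch(b))$, and $\log S = \tfrac12 \log\max(\ch(a),\ch(b))$, and recognizing the expression~\eqref{nLdef},
\[
  -\pi\,I\bigl(r(a),r(b)\bigr) \,=\, \frac{\pi}{8}\,L\bigl(\ch(a),\ch(b)\bigr) \,+\, \frac{\pi}{8}\,\ch(a)\ch(b)\,.
\]

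Finally, integrating over $(a,b) \in [0,m]^2$, the product term yields $\frac{\pi}{8}\bigl(\int_0^m \ch(a)\,\dd a\bigr)^2 = \frac{M_0^2}{8\pi}$ (using $M_0 = \pi\int_0^m \ch(a)\,\dd a$), while the $L$-term produces the first contribution on the right-hand side of~\eqref{energy-via-h}. There is no real obstacle: Riesz's inequality does the conceptual work of selecting the maximizer, the layer-cake formula encodes the constraint $h(\cdot,\om)=\pi\ch$, and the rest is bookkeeping around the $\min/\max$ swap in the explicit quadrature.
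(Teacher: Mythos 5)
Your proof is correct, and it takes a route that is recognizably different from the paper's. The paper writes the kernel $-rs\log\max(r,s)$ as $\tfrac18\partial_r\partial_s K(r^2,s^2)$ with $K(R,S)=L(R,S)+RS$, integrates by parts twice to obtain a double Stieltjes integral $\int\!\!\int K(r^2,s^2)\dd\bar\om(r)\dd\bar\om(s)$, and then performs the substitution $\bar\om(r)=a$, $\bar\om(s)=b$ — a step that requires a separate comment when $\bar\om$ is only nonincreasing rather than strictly decreasing. You instead insert the layer-cake representation $\om_*(r)=\int_0^m\1_{r<\sqrt{\ch(a)}}\dd a$ into \eqref{Erad}, apply Fubini, and evaluate the resulting block integral $I(R,S)$ by explicit quadrature; the identity $-\pi I\bigl(\sqrt{\ch(a)},\sqrt{\ch(b)}\bigr)=\tfrac{\pi}{8}L(\ch(a),\ch(b))+\tfrac{\pi}{8}\ch(a)\ch(b)$ that you verify is precisely the statement $\tfrac18 K(R,S)=\int_0^{\sqrt R}\!\int_0^{\sqrt S}(-\log\max(r,s))\,rs\dd r\dd s$, so the two computations are dual to one another (Fubini with indicators versus integration by parts). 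What your version buys is that the general nonincreasing case comes for free — no inverse function $\bar\om^{-1}$ or restriction to intervals of strict decrease is needed — and all identities are checked by elementary antiderivatives. The only point to state explicitly is the justification of the Fubini interchange: since $\log\max(r,s)$ changes sign, one needs absolute integrability of $\log\max(r,s)\,r\om_*(r)\,s\om_*(s)$, which follows from the decay assumed for elements of $\cP$ (the same hypotheses that make $E$ finite in Proposition~\ref{prop:Edef}); you flag this, so the argument is complete.
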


\begin{proof}
Replacing $\bar\omega$ with $\bar\omega^*$ (an operation that does not affect 
the function $\ch$), we can assume that $\bar\omega$ is radially symmetric 
and nonincreasing in the radial direction. In view of of \eqref{max3}, we then 
have $\cE(\ch) = E(\bar\omega)$, and if we consider $\bar\om$ as a function 
of the radius $r = |x|$ we observe that $\ch(a) = (\bar\om^{-1}(a))^2$ for 
any $a > 0$. To compute $E(\bar\om)$, we start from the expression \eqref{Erad}, 
and we introduce the functions
\[
  k(r,s) \,=\, -rs\log\max(r,s)\,, \qquad K(R,S) \,=\, L(R,S) + RS\,.
\]
Clearly $K(R,0)=0\,,\,K(0,S)=0$ for $R,S>0$, and one can verify by direct 
calculation that $K(R,S)$ is twice continuously differentiable on $(0,\infty)
\times(0,\infty)$ with
\[
  \frac{\partial^2K}{\partial R\partial S}(R,S) \,=\, -\log\max(R,S)\,,
  \qquad R,S > 0\,.
\]
Similarly the function $(r,s) \mapsto K(r^2,s^2)$ is twice continuously 
differentiable on $[0,\infty)\times[0,\infty)$ and
\[
  \frac{1}{8}\,\frac{\partial^2}{\partial r\partial s}\,K(r^2,s^2) \,=\, k(r,s)\,.
\]
Integrating by parts in \eqref{Erad} and recalling that $m = \max\bar\om$, 
we can thus write
\begin{equation}\label{max50}
\begin{split}
  E(\bar\om) \,&=\, \frac{\pi}{8}\int_0^\infty\!\!\int_0^\infty\frac{\partial^2 }{
  \partial r \partial s}K(r^2,s^2)\,\bar\om(r)\,\bar\om(s)\dd r\dd s \,=\,\frac{\pi}{8}
  \int_0^\infty\!\!\int_0^\infty K(r^2,s^2) \dd\bar\om(r) \dd\bar\om(s)\\
  \,&=\, \frac{\pi}8 \int_0^m\!\!\int_0^m K\bigl((\bar\om^{-1}(a))^2,(\bar\om^{-1}(b))^2\bigr)
  \dd a\dd b \,=\, \frac{\pi}8\int_0^m\!\!\int_0^m K(\ch(a),\ch(b))\dd a \dd b\,\\
  \,&=\,\frac{\pi}{8}\int_0^m\!\!\int_0^m L(\ch(a),\ch(b))\dd a\dd b
  + \frac{1}{8\pi}M_0^2\,,
\end{split}
\end{equation}
where we have formally used the substitutions $\bar\om(r)=a$, $\bar\om(s)=b$. 
This is straighforward when $\bar\om$ is strictly decreasing, and the general
case where $\bar\om$ is nonincreasing can be treated by integrating only over
the intervals where $\bar\om$ is strictly decreasing.
\end{proof}

We now make a more precise comparison with the finite-dimensional situation
above. Let us assume that $\bar \om\in\cP$ is radially symmetric with
$\partial_r\bar\om(r)<0$ for all $r > 0$ and $\partial_r^2\bar \om(0) < 0$.  To
eliminate the translational symmetries, we work with the manifold
\begin{equation}\label{tcPdef}
  \tilde\cP \,=\, \bigl\{\om\in\cP\,;\,M_0(\om) = M_0(\bar \om)\,,\,
  M_j(\om) = 0\,,\,j=1,2\bigr\}\,,
\end{equation}
where $M_0, M_j$ are as in \eqref{M0}, \eqref{MjIdef}. If $\eta\in \cX_1$
(see~\eqref{X2def}) is smooth and compactly supported with suffiently small
$C^2$ norm, then $\bar\om + \eta\in\tilde\cP$. Denoting by $\eta_\rs$ the 
projection of $\eta$ onto the subspace $X_\rs$ defined in \eqref{Xrsdef}, 
we can take the quantities $h(a,\bar\om+\eta_\rs)$ and $\eta^\perp_\rs := \eta 
- \eta_\rs$ as the (approximate) analogues of the coordinates $c_j$ and $y_k$, 
respectively. The analogy is not perfect, due to the stronger-than-ideal
assumptions on $\eta$, but it is sufficient for concluding that when
$\bar\om=\bar\om^*$, the negative-definiteness of Arnold's form~\eqref{ni9} 
on the tangent space $T_{\bar\om}\tilde\cP$  is strongly related to
the concavity of the energy $E$ in the variable\footnote{It is perhaps
 worth recalling that $E$ is convex in $\om$ on the subspace given by
 $\irt\om \dd x=0$. However, at some points it may be concave in $\ch$, at least on
 the subspace given by $\int_0^\infty \ch(a)\dd a=0 $.} $\ch$ at the function
$\bar\ch(a)=\pi^{-1} h(a,\bar\om)$. In some sense the expression~\eqref{energy-via-h} 
is ``trying to be concave'', although not quite achieving this: the function 
$L(R,S)$ is separately concave, but not concave. The second variation on the 
space $X_0$ is given by the quadratic form which takes a function $\xi(a)$ with 
$\int_0^m\xi(a)\dd a=0$ to
\[
  \frac{\pi} 8\int_0^m\!\!\int_0^m\left(D_1^2L(\ch(a),\ch(b))\xi(a)^2 + 2D_1D_2
  L(\ch(a),\ch(b))\xi(a)\xi(b)+D_2^2L(\ch(a),\ch(b))\xi(b)^2\right)\dd a\dd b\,.
\]
Due to the separate concavity of $L$ the first term and the third term are
negative, but the second one can lead to the form being indefinite. In view of
our previous considerations, the negativity of the form is equivalent to
the validity of the Hardy inequality~\eqref{Hardy} with $C_H < 1$, and it 
is not hard to verify directly that this is indeed the case. As an analogue 
of~\eqref{Mcj}, we also note that the variational derivative of $\cE$ with 
respect to $\ch$ is
\begin{equation}\label{l50}
  \frac{1}{\pi}\,\frac{\delta\cE}{\delta\ch}(a) \,=\, \Lambda(a) \,=\, 
  -\Phi'(a)\,.
\end{equation}
We will not go into the details as we will not work with this expression. The
reader can also derive the analogue of~\eqref{l30} (under appropriate
assumptions).

\section{Global maximization of the free energy}
\label{sec3}

In the previous section we observed that some radially symmetric vortices
$\bar\omega$, including the Gaussian vortex \eqref{gaussvort} and the algebraic
vortex \eqref{algvort} with $\kappa > 2$, are non-degenerate local maxima of the
associated free energy functional \eqref{cEdef} once restricted to the manifold
$\tilde\cP$ defined in \eqref{tcPdef}.  This was established by showing that the
second order differential $F''(\bar\omega)$ is strictly negative definite on the
tangent space $T_{\bar\omega}\tilde\cP$. We now follow a different approach,
which relies on the direct method in the calculus of variations. Under
appropriate assumptions on the function $\Phi$ in \eqref{cEdef}, we show that
the free energy $F(\omega)$ has a global maximum on the set of all vorticity
distributions with a fixed mass $M$. By construction, if $\bar\omega$ is any
maximizer obtained in this way, the conclusion of Theorem~\ref{Jprop2} applies
with $\gamma \ge 0$, so that Hardy's inequality \eqref{Hardy} holds with
$C_H \le 1$. Note also that, according to the discussion in
Section~\ref{ssec24}, prescribing $\Phi$ amounts to fixing the ``Lagrange
multipliers'' in our constrained maximization problem.

We start with a preliminary result, which is probably well known. For the 
reader's convenience, the proof is reproduced in Section~\ref{ssecA1}. 

\begin{prop}\label{prop:logf}
Assume that $f \in L^1(\R^n)$ is nonnegative and that $M := \int_{\R^n} 
f(x)\dd x > 0$. Then
\begin{align}\label{logf1}
  M \,+\, \int_{\R^n} \bigl(\log_-|x|\bigr) \,f(x)\dd x ~&\lesssim~
  M \,+\, \int_{\R^n} \Bigl(\log_+\frac{f(x)}{M}\Bigr) \,f(x)\dd x\,, \\
  \label{logf2}
  M \,+\, \int_{\R^n} \bigl(\log_+|x|\bigr) \,f(x)\dd x ~&\gtrsim~
  M \,+\, \int_{\R^n} \Bigl(\log_-\frac{f(x)}{M}\Bigr) \,f(x)\dd x\,,
\end{align}
where the implicit constants only depend on the space dimension $n$. 
Moreover, if $f$ is radially symmetric and nonincreasing in the 
radial direction, then the reverse inequalities also hold. 
\end{prop}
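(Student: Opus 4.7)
My plan is to derive both inequalities from the Young duality
\[
   ab \,\le\, a\log a - a + e^b\,, \qquad a \ge 0\,,\quad b \in \R\,,
\]
which is the Legendre pairing of $\Phi(a) = a\log a - a$ with $\Phi^*(b) = e^b$, applied with a scalar multiple of $\log|x|$ in the $b$-slot and $f/M$ in the $a$-slot. For the reverse inequalities under radial monotonicity I shall use the pointwise estimate
\[
   f(x) \,\le\, \frac{M}{\omega_n|x|^n}\,, \qquad x \ne 0\,,
\]
where $\omega_n = |B(0,1)|$; this follows at once from $\omega_n|x|^n f(x) \le \int_{B(0,|x|)} f \le M$.

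\textbf{Inequality \eqref{logf1}.} Since $\log_-|x|$ is supported in the unit ball $B_1$, only that region contributes. Fix any $\alpha \in (0, n)$ (for concreteness $\alpha = n/2$) and apply Young's inequality pointwise on $B_1$ with $a = f/M$ and $b = \alpha \log(1/|x|)$:
\[
   \alpha\,\frac{f(x)}{M}\log\frac{1}{|x|} \,\le\, \frac{f(x)}{M}\log_+\frac{f(x)}{M} \,+\, |x|^{-\alpha}\,,
\]
where I used the trivial bound $a\log a - a \le a\log_+ a$. Multiplying through by $M$ and integrating over $B_1$, the crucial point is that $\int_{B_1} |x|^{-\alpha}\dd x$ is finite precisely because $\alpha<n$, so its contribution is bounded by a constant multiple of $M$. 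This yields \eqref{logf1}.

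\textbf{Inequality \eqref{logf2}.} Setting $\rho = f/M$ (a probability density), it suffices to prove $\int_{\R^n}\rho\log_-\rho\dd x \lesssim 1 + \int_{\R^n}\rho\log_+|x|\dd x$. Fix an exponent $N > 2n$ and split $\R^n$ into the regions $\{\rho \ge (1+|x|)^{-N}\}$ and $\{\rho < (1+|x|)^{-N}\}$. On the first region $\log_-\rho \le N\log(1+|x|) \le N(\log_+|x| + \log 2)$, which contributes at most a dimensional constant times $1 + \int\rho\log_+|x|\dd x$. On the second region $\rho < 1$, so the elementary bound $-t\log t \le 2\sqrt{t}$ for $t \in (0,1]$ (a consequence of $\log s \le s$ with $s = t^{-1/2}$) combined with $\rho < (1+|x|)^{-N}$ gives $\rho\log_-\rho \le 2(1+|x|)^{-N/2}$, whose integral over $\R^n$ is finite precisely because $N > 2n$. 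Multiplying by $M$ yields \eqref{logf2}.

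\textbf{Reverse inequalities.} Assume now that $f$ is radially symmetric and nonincreasing. The pointwise bound above implies $\log(M/f(x)) \ge n\log|x| + \log\omega_n$. On the set $\{f>M\}$ one necessarily has $|x| < \omega_n^{-1/n}$ and the same bound gives $\log_+(f/M) \le n\log(1/|x|) + |\log\omega_n|$; integrating against $f$ dominates $\int f\log_+(f/M)\dd x$ by a multiple of $\int\log_-|x| f\dd x + M$, which is the reverse of \eqref{logf1}. For \eqref{logf2}, the same estimate restricted to $\{|x| \ge 1\}$ (which supports $\log_+|x|$) gives $\log_-(f/M) \ge n\log_+|x| + \log\omega_n$, so integration yields $\int f\log_+|x|\dd x \le n^{-1}\int f\log_-(f/M)\dd x + C_n M$. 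The main obstacle in the whole argument is tuning the two scaling parameters $\alpha$ and $N$: they must be large enough to relate the logarithmic terms to one another, yet small enough (respectively, large enough) to keep the auxiliary integrals $\int_{B_1}|x|^{-\alpha}\dd x$ and $\int_{\R^n}(1+|x|)^{-N/2}\dd x$ finite. A minor book-keeping complication arises in dimensions where $\omega_n < 1$ (i.e.\ the radius $\omega_n^{-1/n}$ exceeds $1$), but this affects only the constants.
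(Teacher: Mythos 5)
Your proof is correct. The only genuine difference from the paper's argument is in the first inequality \eqref{logf1}: you package the comparison between $f$ and the power weight $M|x|^{-\alpha}$ into the Fenchel--Young inequality $ab\le a\log a-a+e^{b}$ for the pair $(a\log a-a,\,e^{b})$, whereas the paper splits $B_1$ explicitly into $\{f\le M|x|^{-n/2}\}$ and its complement and treats the two pieces separately; the two mechanisms are equivalent (the paper's split is exactly how one proves your Young inequality by hand), but your version is arguably cleaner and makes the choice $\alpha\in(0,n)$ transparent. For \eqref{logf2} your decomposition along the threshold $(1+|x|)^{-N}$ with $N>2n$, using $-t\log t\le 2\sqrt{t}$ on the small-density region, mirrors the paper's split at $M/(e(1+|x|)^{2n})$, and your reverse inequalities use the identical pointwise bound $f(x)\le M/(\omega_n|x|^n)$ for radially nonincreasing $f$, including the correct handling of the constant $\log\omega_n$ when $\omega_n<1$.
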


We next specify the function space in which we shall solve our maximization 
problem. 

\begin{df}\label{XMdef}
Given any $M > 0$, we denote by $X_M$ the set of all $\omega \in L^1(\R^2)$ such 
that $\omega(x) \ge 0$ for almost all $x \in \R^2$ and 
\begin{equation}\label{XMprop}
  \int_{\R^2} \omega(x)\dd x \,=\, M\,, \quad
  \DS \int_{\R^2} \omega(x) \log\bigl(1+|x|\bigr)\dd x \,<\, \infty\,, \quad
  \DS \int_{\R^2} \omega(x) \log\bigl(1+\omega(x)\bigr)\dd x \,<\, \infty\,.
\end{equation}
\end{df}

For later use we observe that, if $\omega \in X_M$ and if $\omega^*$ denotes
the symmetric nonincreasing rearrangement of $\omega$, then $\int_{\R^2} 
\omega^*(x) \dd x = \int_{\R^2} \omega(x) \dd x = M$ and
\begin{align*}
  &\DS \int_{\R^2} \omega^*(x) \log\bigl(1+|x|\bigr)\dd x \,\le\, 
  \DS \int_{\R^2} \omega(x) \log\bigl(1+|x|\bigr)\dd x \,<\, \infty\,, \\[1mm]
  &\DS \int_{\R^2} \omega^*(x) \log\bigl(1+\omega^*(x)\bigr)\dd x \,=\,
  \DS \int_{\R^2} \omega(x) \log\bigl(1+\omega(x)\bigr)\dd x \,<\, \infty\,.
\end{align*}
This shows that the set $X_M \subset L^1(\R^2)$ is invariant under the action
of the symmetric nonincreasing rearrangement. 

For $\omega \in X_M$, we consider the free energy defined by $F(\omega) = 
E(\omega) + S(\omega)$, where
\[
  E(\omega) \,=\, \frac{1}{4\pi} \int_{\R^2} \int_{\R^2} \log\frac{1}{|x-y|}\,
  \omega(x)\omega(y)\dd x\dd y\,, \qquad
  S(\omega) \,=\, \int_{\R^2} \Phi(\omega(x))\dd x\,.
\]
We have shown in Proposition~\ref{prop:Edef} that the energy $E(\omega)$ 
is finite for any $\omega \in X_M$. Unlike in Section~\ref{sec2}, the 
function $\Phi$ in the entropy term is not related here to any radially 
symmetric vortex, but is an arbitrary function satisfying the following 
properties: 

\begin{hyp}\label{hypPhi}
The function $\Phi : [0,+\infty) \to \R$ is continuous with $\Phi(0) = 0$. Moreover, 
there exists constants $C_1 \in \R$, $C_2 < M/(8\pi)$, and $C_3 > M/(8\pi)$ such that
\begin{equation}\label{Phicond1}
\begin{split}
  \Phi(\omega) \,&\le\, C_1 \omega + C_2\,\omega \log\frac{M}{\omega} 
  \quad \hbox{when }\, \omega \le M\,, \\[1mm]
  \Phi(\omega) \,&\le\, C_1 \omega - C_3\,\omega \log\frac{\omega}{M} 
  \quad \hbox{when }\, \omega \ge M\,.
\end{split}
\end{equation} 
\end{hyp}

Under Hypotheses~\ref{hypPhi}, the positive part of $\Phi$ satisfies 
$\Phi_+(\omega) \le C\omega(1+|\log(\omega/M)|)$ for some constant $C > 0$, 
and this implies in particular that the entropy $S(\omega)$ is well defined 
in $\R \cup \{-\infty\}$ for any $\omega \in X_M$. We are now in a position 
to state the main result of this section.

\begin{thm}\label{thm:globmax}
Fix any $M > 0$. Under Hypotheses~\ref{hypPhi}, there exists $\bar\omega \in X_M$
such that
\[
  F(\bar \omega) \,=\, E(\bar \omega) + S(\bar \omega) \,=\, \sup_{\omega \in X_M} 
  \bigl(E(\omega) + S(\omega)\bigr)\,.
\]
Moreover $\bar\omega$ can be chosen to be radially symmetric and nonincreasing 
in the radial direction. 
\end{thm}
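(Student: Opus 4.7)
My plan is to apply the direct method of the calculus of variations. First, combining \eqref{EEbound} with the growth bounds of Hypotheses~\ref{hypPhi}, one checks that $F^\star := \sup_{\omega \in X_M} F(\omega)$ is finite. Choose a maximizing sequence $(\omega_n) \subset X_M$ with $F(\omega_n) \to F^\star$.

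The first key step is symmetrization. Replacing each $\omega_n$ by its symmetric nonincreasing rearrangement $\omega_n^\ast$ preserves the mass constraint, does not decrease $E$ (by the Riesz rearrangement inequality), does not change $S$ (by the layer-cake representation, since $\Phi$ acts pointwise), and does not increase the logarithmic moment $\int_{\R^2} \omega_n \log(1+|x|)\dd x$ (by the Hardy--Littlewood inequality, since $\log(1+|x|)$ is radially nondecreasing). The rearranged sequence is still in $X_M$ and is still maximizing, so from now on I assume each $\omega_n$ is radially nonincreasing.

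For compactness, I would derive a priori bounds. The strict inequalities $C_2 < M/(8\pi) < C_3$ in Hypotheses~\ref{hypPhi} are tailored so that the bound $F(\omega_n) \ge F^\star - 1$, combined with \eqref{EEbound} and Proposition~\ref{prop:logf} (to bridge between positive and negative parts of the two logarithms), yields uniform control of both $\int_{\R^2} \omega_n \log(1+\omega_n)\dd x$ and $\int_{\R^2} \omega_n \log(1+|x|)\dd x$. The former gives uniform integrability of $(\omega_n)$ by de la Vall\'ee~Poussin's criterion, while the latter provides tightness via the Chebyshev-type bound $\int_{|x|>R} \omega_n\dd x \le (\log(1+R))^{-1} \int_{\R^2} \omega_n(x) \log(1+|x|)\dd x$. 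Since each $\omega_n$ is radially nonincreasing with mass $M$, the bathtub inequality gives the pointwise estimate $\omega_n(x) \le M/(\pi|x|^2)$; Helly's selection theorem then extracts a subsequence converging pointwise a.e. to a radially nonincreasing limit $\bar\omega$, and Vitali's theorem upgrades this to $\omega_n \to \bar\omega$ in $L^1(\R^2)$. Fatou's lemma applied to the defining moment conditions of $X_M$ gives $\bar\omega \in X_M$.

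It remains to verify $F(\bar\omega) \ge F^\star$. For the energy, the $L^1$ convergence together with the uniform $L\log L$ and log-moment bounds (applied to both $\omega_n$ and $\bar\omega$) allows one to pass to the limit in the double integral defining $E$, giving $E(\omega_n) \to E(\bar\omega)$. For the entropy, decompose $\Phi = \Phi_+ - \Phi_-$; the growth bound in Hypotheses~\ref{hypPhi} dominates $\Phi_+(\omega_n)$ by the uniformly integrable family $C\omega_n(1+|\log(\omega_n/M)|)$, so Vitali's theorem gives $\int_{\R^2} \Phi_+(\omega_n)\dd x \to \int_{\R^2} \Phi_+(\bar\omega)\dd x$, while Fatou applied to $\Phi_-(\omega_n) \ge 0$ yields $\liminf \int_{\R^2} \Phi_-(\omega_n)\dd x \ge \int_{\R^2} \Phi_-(\bar\omega)\dd x$; subtracting gives $\limsup S(\omega_n) \le S(\bar\omega)$. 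The main obstacle lies in the a priori bound step: only the strict inequalities $C_2 < M/(8\pi) < C_3$ prevent mass from either concentrating at a point (which would destroy the $L\log L$ estimate) or escaping to infinity (which would destroy the log-moment bound), in which case the limit would fail to belong to $X_M$ and the extraction would collapse.
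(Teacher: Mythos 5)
Your overall architecture (symmetrize, derive a priori bounds, extract an $L^1$-convergent radially nonincreasing subsequence, pass to the limit) matches the paper's, and your compactness step via the pointwise bound $\omega_n(x)\le M/(\pi|x|^2)$, Helly and Vitali is a legitimate substitute for the concentration-compactness argument used there. However, there are two genuine gaps.

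First, the boundedness of $F$ on $X_M$ and the a priori bounds cannot be extracted from \eqref{EEbound}: that estimate controls $|E(\omega)|$ by $C\|\omega\|_{L^1}\int\omega\log_+(\omega/M)\dd x$ with an unspecified (and, if tracked, too large) constant, plus a spatial log-moment term that nothing in $S$ compensates. The thresholds $C_2<M/(8\pi)<C_3$ in Hypotheses~\ref{hypPhi} are calibrated to the \emph{sharp} constant of the logarithmic Hardy--Littlewood--Sobolev inequality \eqref{logHLS}, which is the indispensable input you never invoke: it gives $E(\omega)\le \frac{M^2}{8\pi}(1+\log\pi)-\frac{M}{8\pi}\int\omega\log(M/\omega)\dd x$, and only with this exact coefficient do the signs in \eqref{Eupper} yield both $F_M<\infty$ and the uniform bound on $\int\omega_j|\log(\omega_j/M)|\dd x$ (the spatial log-moment bound then follows from Proposition~\ref{prop:logf} for radially nonincreasing functions, as you say). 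Without log-HLS the very first step fails under the stated hypotheses.

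Second, the limit passage. The claims that $E(\omega_n)\to E(\bar\omega)$ and that $\int\Phi_+(\omega_n)\dd x\to\int\Phi_+(\bar\omega)\dd x$ by Vitali are not justified by the available bounds: the families $\omega_n\log(M/\omega_n)$ and $\omega_n\log(1+|x|)$ are only \emph{bounded} in $L^1$, not tight, so a uniformly positive amount of entropy $C_2\int_{|x|>R}\omega_n\log(M/\omega_n)\dd x$ (together with a matching amount of negative cross-energy) can persist near spatial infinity for all $n$ and be lost in the limit; neither $E$ nor $S$ is separately upper semicontinuous along the sequence. What rescues the sum $E+S$ is a cancellation between the far-field cross-energy $2E(\omega^1_{jR},\omega^2_{jR})$ and the far-field entropy $S(\omega^2_{jR})$: on the set where $\omega_j(r)\ge Mr^{-\alpha}$ one has $2\pi C_2\log(M/\omega_j(r))\le M_j(R)\log r$ precisely because $C_2<M/(8\pi)$ allows a choice $\alpha>2$ with $4\pi C_2\alpha\le M$, see \eqref{IIsplit}--\eqref{IIdef}; on the complement $\omega_j$ decays faster than $r^{-2}$ and the tail is negligible. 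Your argument contains no mechanism for this cancellation. (For the inner region, the clean route to $E(\omega^1_{jR})\to E(\bar\omega^1_R)$ is the radial representation \eqref{Erad2} together with the uniform convergence of the mass functions $M_j(r)$, rather than uniform integrability of $\omega_n\log\omega_n$ near the diagonal, which again does not follow from a mere $L^1$ bound.)
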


The proof of Theorem~\ref{thm:globmax} is divided into two parts. The first
one consists in showing that the free energy $F$ is bounded from above
on $X_M$, and that there exists a maximizing sequence which is convergent
in $L^1(\R^2)$. We formulate this in a separate statement:

\begin{prop}\label{prop:globmax}
Under Hypotheses~\ref{hypPhi}, the free energy $F = E+S$ is bounded from above 
on the space $X_M$:
\[
  F_M \,:=\, \sup_{\omega \in X_M} \bigl(E(\omega) + S(\omega)\bigr) \,<\, \infty\,.
\]
Moreover, there exists a maximizing sequence $(\omega_j)_{j \in \N}$ in $X_M$ which
converges in $L^1(\R^2)$ to some limiting profile $\bar\omega = \bar\omega^* \in X_M$ 
as $j \to +\infty$, and we have $S(\bar\omega) > -\infty$. 
\end{prop}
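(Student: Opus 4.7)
The plan is to apply the direct method in the calculus of variations, after a preliminary reduction to symmetric decreasing profiles. For any $\omega \in X_M$ the rearrangement $\omega^*$ lies in $X_M$ (as noted after Definition~\ref{XMdef}), preserves the entropy $S$ (by equimeasurability of $\Phi\circ\omega$ and $\Phi\circ\omega^*$), and can only increase the energy $E$ by Riesz's rearrangement inequality (as invoked below \eqref{Edef}); it therefore suffices to work with symmetric decreasing profiles. The upper bound $F_M < \infty$ then comes from combining the logarithmic Hardy--Littlewood--Sobolev inequality \eqref{onofri}, rescaled to mass $M$ in the form $E(\omega) \le \tfrac{M}{8\pi}\int \omega \log(\omega/M)\dd x + K_0(M)$, with Hypotheses~\ref{hypPhi}. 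Splitting the log integral at the level $\omega = M$ and using the signs of $\omega\log(\omega/M)$ on each piece gives
\[
  F(\omega) \,\le\, K_0 + C_1 M \,-\, \Bigl(\tfrac{M}{8\pi} - C_2\Bigr) S_-(\omega) \,-\, \Bigl(C_3 - \tfrac{M}{8\pi}\Bigr) S_+(\omega),
\]
where $S_-(\omega) := \int_{\{\omega \le M\}} \omega\log(M/\omega)\dd x \ge 0$ and $S_+(\omega) := \int_{\{\omega \ge M\}}\omega\log(\omega/M)\dd x \ge 0$. The strict inequalities $C_2 < M/(8\pi) < C_3$ make both coefficients strictly positive, so $F_M \le K_0 + C_1 M < \infty$ and, along any maximizing sequence $(\omega_j)$ of symmetric decreasing elements of $X_M$, both $S_-(\omega_j)$ and $S_+(\omega_j)$ are uniformly bounded.

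The uniform bound on $S_+(\omega_j)$ is an $L\log L$ control providing equi-integrability via de la Vall\'ee-Poussin. For tightness I would invoke the reverse of \eqref{logf2} in Proposition~\ref{prop:logf}, valid because the $\omega_j$ are symmetric decreasing, to conclude $\int \omega_j \log_+|x|\dd x \lesssim M + S_-(\omega_j) \le C$; Markov's inequality then gives $\int_{|x|>R}\omega_j \to 0$ uniformly as $R\to\infty$. Applying Helly's selection principle to the distribution functions $t \mapsto |\{\omega_j > t\}|$ (which are nonincreasing and bounded by $M/t$) extracts a subsequence with an a.e.\ limit $\bar\omega$, again a symmetric decreasing profile. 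Tightness preserves the mass in the limit, and Scheff\'e's lemma upgrades a.e.\ convergence with convergent $L^1$ norms to convergence in $L^1(\R^2)$; Fatou's lemma applied to $\omega_j \log(1+|x|)$ and $\omega_j\log(1+\omega_j)$ then places $\bar\omega$ in $X_M$.

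For the final assertion $S(\bar\omega) > -\infty$, the HLS bound also gives $E(\omega_j) \le \tfrac{M}{8\pi}S_+(\omega_j) + K_0$, so $E(\omega_j)$ is uniformly bounded above and $S(\omega_j) = F(\omega_j) - E(\omega_j)$ is uniformly bounded below. On the other hand, Hypotheses~\ref{hypPhi} together with the uniform bound on $S_-(\omega_j)$ and the fact that $|\{\omega_j \ge M\}| \le 1$ yield a uniform bound on $\int \Phi_+(\omega_j)\dd x$ (the only nontrivial contribution above level $M$ comes from $\{M \le \omega_j \le \omega_\star\}$, for some universal $\omega_\star < \infty$ beyond which Hypotheses~\ref{hypPhi} force $\Phi < 0$). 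Consequently $\int \Phi_-(\omega_j)\dd x$ is uniformly bounded, and Fatou applied to $\Phi_-(\omega_j)$ gives $\int \Phi_-(\bar\omega)\dd x < \infty$, hence $S(\bar\omega) > -\infty$.

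The main obstacle is the sharp interplay between the constant $M/(8\pi)$ coming from the logarithmic HLS inequality and the strict thresholds $C_2 < M/(8\pi) < C_3$ in Hypotheses~\ref{hypPhi}: together they convert an a priori indefinite functional into a coercive one by producing positive room on both sides of the split $\{\omega \le M\} \cup \{\omega \ge M\}$. A secondary subtlety is that tightness of the maximizing sequence relies on the reverse rearrangement inequality in Proposition~\ref{prop:logf}, which is available only because the profiles have been taken symmetric decreasing.
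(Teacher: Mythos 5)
Your proof is correct, and its overall architecture coincides with the paper's: reduction to symmetric decreasing profiles, the upper bound on $F$ obtained by combining the logarithmic HLS inequality \eqref{logHLS} with Hypotheses~\ref{hypPhi} through the split at the level $\omega=M$ (your displayed inequality is exactly \eqref{Eupper}), the resulting uniform bounds on $S_\pm(\omega_j)$, the tightness estimate coming from the reverse inequality in Proposition~\ref{prop:logf} for symmetric decreasing functions, and the final Fatou arguments giving $\bar\omega\in X_M$ and $S(\bar\omega)>-\infty$. The one place where you genuinely deviate is the extraction of an $L^1$-convergent subsequence: the paper invokes Lions' concentration-compactness principle, ruling out concentration, vanishing and dichotomy by the two a priori bounds and by radial monotonicity, whereas you obtain a.e.\ convergence from Helly's selection theorem applied to the monotone profiles and upgrade it to $L^1$ convergence via de la Vall\'ee-Poussin and Vitali/Scheff\'e (equi-integrability from the $L\log L$ bound, tightness from the logarithmic moment bound). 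The two implementations use exactly the same three a priori ingredients; yours is more self-contained since it avoids citing the concentration-compactness lemma, at the cost of a few extra lines. One small point of care: where you write that ``tightness preserves the mass in the limit,'' you also need the equi-integrability established just before (otherwise mass could still concentrate on a null set and be lost in the a.e.\ limit); with both in hand one indeed gets $\int_{\R^2}\bar\omega\dd x = M$ and the passage to $L^1$ convergence is legitimate.
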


\begin{proof}
Our starting point is the logarithmic Hardy-Littlewood-Sobolev inequality 
\begin{equation}\label{logHLS}
  E(\omega) + \frac{M}{8\pi}\int_{\R^2} \omega \log\frac{M}{\omega}\dd x \,\le\, 
  \frac{M^2}{8\pi}\,\bigl(1+\log\pi\bigr)\,,
\end{equation}
which holds for all $\omega \in X_M$, see \cite{Carlen-Loss}. In view of 
\eqref{Phicond1}, we deduce from \eqref{logHLS} that
\begin{equation}\label{Eupper}
\begin{split}
  E(\omega) &+ S(\omega) + \Bigl(\frac{M}{8\pi} - C_2\Bigr)\int_{\omega < M}
  \omega \log\frac{M}{\omega}\dd x + \Bigl(C_3 - \frac{M}{8\pi}\Bigr)\int_{\omega > M}
  \omega \log\frac{\omega}{M}\dd x \\  
  \,&\le\, E(\omega) + C_1 M + \frac{M}{8\pi}\int_{\R^2} \omega \log\frac{M}{\omega}\dd x 
  \,\le\, C_1 M + \frac{M^2}{8\pi}\,\bigl(1+\log\pi\bigr)\,.
\end{split}
\end{equation}
Since $C_2 < M/(8\pi)$ and $C_3 > M/(8\pi)$, this proves that $F_M \le C_1 M + 
M^2(1+\log\pi)/(8\pi)$. 

Now, let $(\omega_j)_{j \in \N}$ be a sequence in $X_M$ such that $E(\omega_j) + 
S(\omega_j) \to F_M$ as $j \to +\infty$. If we denote by $(\omega_j)^* \in X_M$ 
the symmetric nonincreasing rearrangement of $\omega_j$, we know that $E((\omega_j)^*) 
\ge E(\omega_j)$ and $S((\omega_j)^*) = S(\omega_j)$ for all $j \in \N$, so that 
$\bigl((\omega_j)^*\bigr)_{j \in \N}$ is a fortiori a maximizing sequence. So we 
assume henceforth that $\omega_j = (\omega_j)^*$, i.e. $\omega_j$ is radially 
symmetric and nonincreasing in the radial direction. In that case, there 
exists a constant $C_0 > 0$ such that
\begin{equation}\label{omjapriori}
  \int_{\R^2} \omega_j(x)\,\Bigl|\log\frac{\omega_j(x)}{M}\Bigr| \dd x \,\le\, C_0\,,
  \qquad \hbox{and}\qquad 
  \int_{\R^2} \omega_j(x)\,\bigl|\log|x|\bigr| \dd x \,\le\, C_0\,,
\end{equation}
for all $j \in \N$. Indeed, the first inequality in \eqref{omjapriori} follows
directly from \eqref{Eupper}, and the second one is consequence of the first 
inequality and of Proposition~\ref{prop:logf}, since $\omega_j = (\omega_j)^*$. 

It remains to verify that one can extract a subsequence, still denoted by 
$(\omega_j)_{j \in \N}$, such that $\|\omega_j - \bar\omega\|_{L^1} \to 0$ as 
$j \to +\infty$ for some $\bar\omega \in X_M$. This can be done using the 
concentration-compactness principle \cite{Lions}. Indeed, among the three possible 
scenarios:

\noindent \quad i) Concentration is ruled out by the first inequality in 
\eqref{omjapriori}; 

\noindent \quad ii) Evanescence is ruled out by the second inequality in 
\eqref{omjapriori};

\noindent \quad iii) Dichotomy cannot occur for radially symmetric nonincreasing
functions. 

\smallskip\noindent So there exists a subsequence $(\omega_j)_{j \in \N}$ that
converges in $L^1(\R^2)$, and pointwise almost everywhere, to some limit
$\bar\omega \in L^1(\R^2)$. It is clear that $\bar \omega \ge 0$ and
$\int_{\R^2} \bar \omega(x)\dd x = M$. Moreover, using \eqref{omjapriori} and
Fatou's lemma, we obtain
\begin{equation}\label{barombd}
  \int_{\R^2} \bar\omega(x)\,\Bigl|\log\frac{\bar\omega(x)}{M}\Bigr| \dd x \,\le\, 
  C_0\,, \qquad \hbox{and}\qquad 
  \int_{\R^2} \bar\omega(x)\,\bigl|\log|x|\bigr| \dd x \,\le\, C_0\,,
\end{equation}
which proves that $\bar\omega \in X_M$. Finally, if we decompose $\Phi = 
\Phi_+ - \Phi_-$, where $\Phi_+, \Phi_-$ denote the positive and negative parts
of $\Phi$, we have the lower bound
\begin{equation}\label{Slower}
  S(\bar\omega) \,\ge\, -\int_{\R^2}\Phi_-(\bar\omega(x))\dd x \,\ge\, 
  - \liminf_{j \to +\infty}\int_{\R^2}\Phi_-(\omega_j(x))\dd x\,,
\end{equation}
where the second inequality is again obtained by Fatou's lemma. But we have 
the identity
\[
  \int_{\R^2}\Phi_-(\omega_j(x))\dd x \,=\, \int_{\R^2}\Phi_+(\omega_j(x))\dd x
  - S(\omega_j) \,=\, \int_{\R^2}\Phi_+(\omega_j(x))\dd x + E(\omega_j) - 
  F(\omega_j)\,,
\]
where the first two terms in the right-hand side are bounded uniformly in 
$j$ by \eqref{omjapriori}, whereas $F(\omega_j)$ is bounded from below 
since $(\omega_j)$ is a maximizing sequence for $F$. We conclude that
the right-hand side of \eqref{Slower} is finite, so that $S(\bar\omega) 
> -\infty$.
\end{proof}

To conclude the proof of Theorem~\ref{thm:globmax}, it remains to show that 
the free energy is upper semicontinuous along the maximizing sequence 
constructed in Proposition~\ref{prop:globmax}, namely
\begin{equation}\label{uppersc}
  E(\bar \omega) + S(\bar \omega) \,\ge\, \limsup_{j \to +\infty} 
  \Bigl(E(\omega_j) + S(\omega_j)\Bigr) \,=\, F_M\,.
\end{equation}
This immediately implies that $E(\bar \omega) + S(\bar \omega) = F_M$, 
which is the desired result. 

\begin{proof}[\bf Proof of Theorem~\ref{thm:globmax}] Let $(\omega_j)_{j \in \N}$
be the maximizing sequence defined in Proposition~\ref{prop:globmax}, 
and $\bar\omega \in X_M$ be the limiting profile. Given any sufficiently 
large $R > 0$, we decompose
\begin{align*}
  \omega_j(x) \,&=\, \omega_j(x) \,\1_{\{|x| \le R\}} + \omega_j(x) \,\1_{\{|x| > R\}} 
  \,=:\, \omega_{jR}^1(x) + \omega_{jR}^2(x)\,, \\
  \bar\omega(x) \,&=\, \,\bar\omega(x) \,\1_{\{|x| \le R\}} \,+\, \,\bar\omega(x) 
  \,\1_{\{|x| > R\}} \,=:\, \,\bar\omega_R^1(x) \,+ \bar\omega_R^2(x)\,,
\end{align*}
for all $x \in \R^2$. We thus have
\begin{align*}
  E(\omega_j) + S(\omega_j) \,&=\, E(\omega_{jR}^1) + S(\omega_{jR}^1) + 
  2 E(\omega_{jR}^1,\omega_{jR}^2) + E(\omega_{jR}^2) + S(\omega_{jR}^2)\,, \\ 
  E(\bar\omega) +\, S(\bar\omega) \,&=\, E(\bar\omega_R^1) \,\,+ S(\bar\omega_R^1) 
  \,\,+\,\, 2 E(\bar\omega_R^1,\bar\omega_R^2) \,\,+\, E(\bar\omega_R^2) 
  \,\,+\, S(\bar\omega_R^2)\,,
\end{align*}
where $E(\omega_1,\omega_2)$ is the bilinear form associated with the 
energy functional: 
\[
  E(\omega_1,\omega_2) \,=\, -\frac{1}{4\pi} 
  \int_{\R^2}\int_{\R^2} \log|x-y|\,\om_1(x)\,\om_2(y)\dd x\dd y\,.
\]
The upper-semicontinuity property \eqref{uppersc} can be deduced from
the following assertions: 
\begin{align}
  &\limsup_{j \to +\infty}\Bigl(E(\omega_{jR}^1) + S(\omega_{jR}^1)\Bigr) \,\le\, 
  E(\bar\omega_R^1) + S(\bar\omega_R^1)\,, \label{propa}\\
  &\sup_{j \in \N}\Bigl(2 E(\omega_{jR}^1,\omega_{jR}^2) + E(\omega_{jR}^2) + 
  S(\omega_{jR}^2)\Bigr) \,\le\, \delta_1(R) \,\xrightarrow[R \to +\infty]{}\, 0\,,
  \label{propb}\\[1mm]
  &2 E(\bar\omega_R^1,\bar\omega_R^2) +\, E(\bar\omega_R^2) + S(\bar\omega_R^2)
  \,=\, \delta_2(R) \,\xrightarrow[R \to +\infty]{}\, 0\,. \label{propc}
\end{align}
Indeed, assuming that \eqref{propa}--\eqref{propc} hold, we obtain
\[
  \limsup_{j \to +\infty}\Bigl(E(\omega_j) + S(\omega_j)\Bigr) - 
  \Bigl(E(\bar\omega) + S(\bar\omega)\Bigr) \,\le\, \delta_1(R) - \delta_2(R) 
  \,\xrightarrow[R \to +\infty]{}\, 0\,. 
\]

It remains to verify the assertions \eqref{propa}--\eqref{propc} above. We
recall that the functions $\omega_j, \bar \omega$ are radially symmetric and
nonincreasing in the radial direction. With a slight abuse of notation, we write
$\omega_j(r)$ instead of $\omega_j(x)$ when $r = |x|$, and similarly for
$\bar\omega$. Accordingly, using \eqref{Erad}, we obtain the following
expressions for the energy of $\omega_j$ and $\bar\omega$:
\begin{equation}\label{Erad2}
  E(\omega_j) = -\int_0^\infty M_j(r) \log(r) \,r \omega_j(r)\dd r\,, \qquad
  E(\bar\omega) = -\int_0^\infty \overline{M}(r)\log(r) \,r \bar\omega(r)\dd r\,,
\end{equation}
where 
\begin{equation}\label{Mjdef}
  M_j(r) \,=\, 2\pi\int_0^r s\omega_j(s)\dd s\,, \qquad 
  \overline{M}(r) \,=\, 2\pi\int_0^r s\bar\omega(s)\dd s\,, \qquad r > 0\,.
\end{equation}
Since $\omega_j \to \bar\omega$ in $L^1(\R^2)$, we see that $M_j(r) \to
\overline{M}(r)$ uniformly in $r$ as $j \to +\infty$.  Moreover, since $\omega_j
\in X_M$ satisfies \eqref{omjapriori}, the quantity $M_j(r)$ converges to $M$ as
$r \to +\infty$ uniformly in $j$. In particular, we can choose $R \ge 1$ large
enough so that $M_j(r) \ge M/2$ for all $j \in \N$ when $r \ge R$.

To prove \eqref{propa}, we first decompose
\[
  E(\omega_{jR}^1) - E(\bar\omega_R^1) \,=\, -\int_0^R \bigl(M_j(r) - 
  \overline{M}(r)\bigr)\log(r)\,r \omega_j(r)\dd r
  -\int_0^R \overline{M}(r) \log(r)\,r \bigl(\omega_j(r) - \bar\omega(r)\bigr)
  \dd r\,,
\]
and we deduce that
\begin{align}\nonumber
  \big|E(\omega_{jR}^1) - E(\bar\omega_R^1)\big| \,&\le\, \sup_{0 \le r \le R} 
  \Bigl(|M_j(r) - \overline{M}(r)|\Bigr) \,\int_0^R |\log(r)|\,r \omega_j(r)\dd r \\
  &\quad + \sup_{0 \le r \le R} \Bigl(|\log(r)|\,\overline{M}(r)\Bigr)\,\int_0^R 
  r\bigl|\omega_j(r) - \bar\omega(r)\bigr| \dd r \,\xrightarrow[j \to +\infty]{}\,0\,.
  \label{Econv}
\end{align}
Here we used the convergence of $\omega_j$ to $\bar\omega$ in $L^1(\R^2)$, the a
priori estimates \eqref{omjapriori}, and the fact that $\log(r)\overline{M}(r)$
is bounded as $r \to 0$, as a consequence of \eqref{barombd}. On the other hand,
since the function $-\Phi$ is continuous and bounded from below, and since we
integrate on the bounded domain $\{x \in \R^2\,;\, |x| \le R\}$, we can apply
Fatou's lemma to obtain
\begin{equation}\label{Supconv}
  -S(\bar\omega_R^1) \,=\, \int_{|x|\le R} -\Phi(\bar\omega(x))\dd x \,\le\, 
  \liminf_{j \to +\infty} \int_{|x|\le R} -\Phi(\omega_j(x))\dd x \,=\, 
  -\limsup_{j \to +\infty}S(\omega_{jR}^1)\,.
\end{equation}
Combining \eqref{Econv} and \eqref{Supconv}, we obtain \eqref{propa}. 

We next prove \eqref{propb}. Recalling that $R \ge 1$, we first observe that
\[
  E(\omega_{jR}^2) = -\int_R^\infty M_j(r) \log(r) \,r \omega_j(r)\dd r
  \,\le\, 0\,,
\]
which means that the contribution of $E(\omega_{jR}^2)$ can be disregarded since
we only need an upper bound. The other terms in \eqref{propb} have the following
expressions
\[
  2E(\omega_{jR}^1,\omega_{jR}^2) = -M_j(R)\int_R^\infty \log(r) \,r
  \omega_j(r)\dd r\,, \qquad S(\omega_{jR}^2) \,=\, 2\pi \int_R^\infty
  \Phi(\omega_j(r))\,r \dd r \,.
\]
Since $\omega_j$ is decreasing, we have $\omega_j(r) \le M_j(r)/(\pi r^2) \le M$
for $r \ge R$. So, using Hypotheses~\ref{hypPhi}, we deduce that $\Phi(\omega_j)
\le C_1\omega_j + C_2 \omega_j\log(M/\omega_j)$, where $C_1 \in \R$ and
$C_2 < M/(8\pi)$. It follows that
\begin{equation}\label{IIsplit}
  2E(\omega_{jR}^1,\omega_{jR}^2) + S(\omega_{jR}^2) \,\le\, 2\pi C_1
  \int_R^\infty \omega_j(r) r\dd r + \int_R^\infty \Delta_j(r)\omega_j(r) r\dd r\,,
\end{equation}
where
\[
  \Delta_j(r) \,=\, 2\pi C_2 \log\frac{M}{\omega_j(r)} - M_j(R)\log(r)\,.
\]
In view of \eqref{omjapriori}, the first term in the right-hand side of
\eqref{IIsplit} converges to zero uniformly in $j$ as $R \to +\infty$, and can
therefore be absorbed in the quantity $\delta_1(R)$. To treat the second term,
we fix a positive number $\alpha > 2$ such that $4\pi C_2\alpha \le M$, and we
introduce the mutually disjoints sets
\begin{equation}\label{IIdef}
  I(\alpha,R) \,=\, \bigl\{r \ge R\,;\, \omega_j(r) \,\ge\, M r^{-\alpha}\bigr\}
  \,, \qquad
  I(\alpha,R)^c \,=\, \bigl\{r \ge R\,;\, \omega_j(r) \,< M\, r^{-\alpha}\bigr\}\,.
\end{equation}
As $M_j(R) \ge M/2$, it follows from \eqref{IIdef} that $\Delta_j(r) \le 0$ when
$r \in I(\alpha,R)$, so the last integral in \eqref{IIsplit} can be restricted
to the complement $I(\alpha,R)^c$. But on that set we have the upper bound
$\omega_j(r) < M r^{-\alpha}$, where $\alpha > 2$, and we easily deduce that
$\int_{I(\alpha,R)^c}\Delta_j(r)\omega_j(r) r\dd r$ converges to zero as $R \to
+\infty$, uniformly in $j$. Altogether we obtain \eqref{propb}.

It remains to establish \eqref{propc}, which is an easy task. Indeed 
$\bar\omega$ is a fixed function which satisfies the estimates \eqref{barombd},
so that $2 E(\bar\omega_R^1,\bar\omega_R^2) +\, E(\bar\omega_R^2) \to 0$
as $R \to +\infty$. In addition, we proved in Proposition~\ref{prop:globmax}
that the inegral defining $S(\bar\omega)$ is absolutely convergent, and
this implies that $S(\bar\omega_R^2) \to 0$ as $R \to +\infty$. We thus
obtain \eqref{propc}, and the proof of Theorem~\ref{thm:globmax} is
complete. 
\end{proof}

\begin{ex}\label{ex:algebraic}
We consider the family of algebraic vortices with parameter $\kappa > 1$: 
\[
  \omega(r) \,=\, \frac{1}{(1+r^2)^\kappa}\,, \qquad 
  M \,=\, 2\pi \int_0^\infty r \omega(r)\dd r \,=\, \frac{\pi}{\kappa-1}\,.
\]
The associated stream function $\psi$ satisfies $\psi(r) = \psi(0) + 
\int_0^r \psi'(s)\dd s$ where
\[
  \psi(0) \,=\, \int_0^\infty \log(r)\,\frac{r}{(1+r^2)^\kappa}\dd r\,, 
  \qquad \psi'(r) \,=\, \frac{1}{2(\kappa{-}1)r}\Bigl(1 - \frac{1}{(1+r^2)^{\kappa-1}}
  \Bigr)\,.
\]
We have $\Phi(\omega) = \int_0^\omega \phi(s)\dd s$ where $\phi(\omega(r)) = 
\psi(r)$. Explicitly, for a few values of $\kappa$, we find 
\[
\begin{array}{lll}
  \kappa = \frac32\,:\quad &\DS\psi(r) = \log\bigl(1+\sqrt{1+r^2}\bigr)
  &\DS\phi(\omega) = \log\Bigl(1 + \frac{1}{\omega^{1/3}}\Bigr) \\[3mm] 
  \kappa = 2\,: &\DS\psi(r) = \frac14\log\bigl(1+r^2\bigr) 
  &\DS\phi(\omega) = \frac18 \log\frac{1}{\omega} \\[3mm] 
  \kappa = 3\,: &\DS\psi(r) = \frac18\Bigl(\log\bigl(1+r^2\bigr) - \frac{1}{1+r^2}
  \Bigr)\qquad &\DS\phi(\omega) = \frac{1}{24}\log\frac{1}{\omega} - \frac{
  \omega^{1/3}}{8}
\end{array}
\]
In all cases, we observe that
\[
  \phi(\omega) \,=\, \Phi'(\omega) \,\sim\, \frac{1}{4\kappa(\kappa{-}1)}
  \,\log\frac{1}{\omega} \,=\, \frac{M}{4\pi\kappa} \,\log\frac{1}{\omega}\,,
  \quad \hbox{as}\quad \omega \to 0\,.
\]
It follows that Hypotheses~\ref{hypPhi} are satisfied if and only if $\kappa > 2$. 
\end{ex}

\begin{ex}\label{ex:gaussian}
We next consider the Gaussian vortex $\omega(r) = e^{-r^2/4}$, where $M = 4\pi$. 
In that case we have $\psi(0) = \int_0^{+\infty}\log(r)\,e^{-r^2/4}\dd r = 
2\log(2)-\gamma_E$, so that the stream function satisfies
\[
  \psi(r) \,=\, \psi(0) + \int_0^r \frac{2}{s}\Bigl(1 - e^{-s^2/4}\Bigr)\dd s
  \,=\, 2\log(2)-\gamma_E + \mathrm{E_{in}}(r^2/4)\,,
\]
where
\[
  \mathrm{E_{in}}(z) \,=\, \int_0^z \frac{1-e^{-t}}{t}\dd t \,=\, \sum_{k=1}^\infty
  \frac{(-1)^{k-1}}{k}\,\frac{z^k}{k!}\,, \qquad z \in \C\,.
\]
We conclude that
\[
  \phi(\omega) \,=\, \Phi'(\omega) \,=\,  2\log(2)-\gamma_E + \mathrm{E_{in}}
  \Bigl(\log\frac{1}{\omega}\Bigr)\,.
\]
In particular $\phi(\omega) \sim \log\log\frac{1}{\omega}$ as $\omega \to 0$, 
and Hypotheses~\ref{hypPhi} are satisfied in that case. 
\end{ex}


We do not have much information on the maximizer $\bar\omega$ whose existence
is established in Theorem~\ref{thm:globmax}. We expect that, if $\Phi$ is 
as in Example~\ref{ex:gaussian}, the maximizer is indeed the Gaussian vortex
\eqref{gaussvort}, but except from numerical evidence we have no proof so 
far. Similarly, we believe that the algebraic vortices \eqref{algvort} 
with $\kappa \ge 2$ are global maximizers, but this is known only in the 
particular case $\kappa = 2$, where maximality follows from the logarithmic 
HLS inequality \eqref{logHLS}. 

The examples above also suggest that the decay rate of the maximizer
$\bar\omega(x)$ as $|x| \to \infty$ strongly depends on the behavior of the
function $\Phi(s)$ near $s = 0$. Extending the techniques in the proof of
Theorem~\ref{thm:globmax}, one should be able to prove that, if $\Phi$ is
differentiable to the right at the origin, the corresponding maximizer
$\bar\omega$ is compactly supported. It is also worth mentioning that the
entropy function $\Phi$ associated with any radially symmetric decreasing vortex
$\bar\omega$ through the relation $\bar\psi(x) = \Phi'(\bar\omega(x))$ is
necessarily concave on the range of $\bar\omega$, whereas no concavity 
assumption is included in Hypotheses~\ref{hypPhi}. This suggests that 
the maximizer $\bar\omega$ corresponding to a non-concave function $\Phi$
should be discontinuous, so that its range does not include the intervals 
where $\Phi$ does not coincide with its concave hull. 

\section{Stability of viscous vortices}\label{sec4}

In this final section, we give a new proof of the nonlinear stability of the
Oseen vortices, which are self-similar solutions of the Navier-Stokes in
$\R^2$. Our approach relies on the functional-analytic tools developed in
Section~\ref{sec2}, in connexion with Arnold's variational principle, although
we now consider a dissipative equation for which the Casimir functions
\eqref{I10} are no longer conserved quantities. As in \cite{GW1,GW2}, we
introduce self-similar variables so that Oseen's vortex becomes a stationary
solution, whose stability in thus easier to study. In the new coordinates, 
the viscous vorticity equation takes the form
\begin{equation}\label{SSvort}
  \partial_t \omega + \bigl\{\psi,\omega\bigr\} \,=\, \cL \omega\,, 
  \qquad \Delta \psi \,=\, \omega\,,
\end{equation}
where $\{\psi,\omega\} = \nabla^\perp\psi\cdot \nabla\omega$ and $\cL$ is 
the rescaled diffusion operator
\begin{equation}\label{cLdef}
  \cL \,=\, \Delta_x + \frac{1}{2}\,x\cdot\nabla + 1\,.
\end{equation}
We consider the vortex with Gaussian profile \eqref{gaussvort}, namely
\begin{equation}\label{Gauss2}
  \bar\omega(x) \,=\, \frac{1}{4\pi}\,e^{-|x|^2/4}\,, \qquad 
  \bar u(x) \,=\, \nabla^\perp\bar\psi(x) \,=\, \frac{1}{2\pi}\,
  \frac{x^\perp}{|x|^2}\Bigl(1 - e^{-|x|^2/4}\Bigr)\,.
\end{equation}
It is easy to verify that $\cL\bar\omega = 0$ and $\bigl\{\bar\psi,\bar\omega
\bigr\} = 0$. This implies that $\omega = \alpha \bar\omega$ is a stationary 
solution of \eqref{SSvort} for any $\alpha \in \R$. This family of equilibria
is known to be stable with respect to perturbations in various weighted $L^2$ 
spaces, see \cite{GW2,Ga1}. We present here a new stability proof, which may 
be easier to adapt to more general situations. 

\subsection{Nonlinear stability of Oseen vortices}\label{ssec41}

Given any $\alpha \in \R$, we consider solutions of \eqref{SSvort} of the form 
$\omega = \alpha\bar\omega + \tilde \omega$, $\psi = \alpha\bar\psi + \tilde \psi$. 
The perturbation $\tilde \omega$ satisfies the modified equation
\begin{equation}\label{SSvort2}
  \partial_t \tilde \omega + \alpha \bigl\{\bar\psi,\tilde\omega\bigr\} + 
  \alpha \bigl\{\tilde\psi,\bar\omega\bigr\} + \bigl\{\tilde\psi,\tilde\omega\bigr\} 
  \,=\, \cL \tilde \omega\,, 
\end{equation}
where it is understood that the stream function $\tilde \psi$ is expressed 
in terms of $\tilde\omega$ via the formula \eqref{psidef}, so that 
$\Delta \tilde \psi = \tilde\omega$. We assume henceforth that the perturbation 
$\tilde \omega$ satisfies the moment conditions
\begin{equation}\label{Momcond}
  \int_{\R^2} \tilde \omega \dd x \,=\, 0\,, \qquad \hbox{and}\quad 
  \int_{\R^2} x_j \tilde \omega \dd x \,=\, 0 \quad\hbox{for } j = 1,2\,,
\end{equation}
which are preserved under the evolution defined by \eqref{SSvort2}. As is shown
at the end of Reference~\cite{GW2}, this hypothesis does not restrict the
generality, in the sense that stability with respect to general perturbations
(with no moment conditions) can then deduced by a simple argument. As for the
existence of solutions to \eqref{SSvort2}, we have the following standard
result\:

\begin{lem}\label{Cauchylem}
The Cauchy problem for equation \eqref{SSvort2} is globally well-posed in the 
weighted $L^2$ space $X$ defined by \eqref{Xdef}, where $\cA(x) = 4 |x|^{-2}
(e^{|x|^2/4}-1)$, and the subspace $\cX_1 \subset X$ defined by \eqref{X2def}
is invariant under the evolution. 
\end{lem}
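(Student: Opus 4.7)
The plan is to treat equation \eqref{SSvort2} as a semilinear parabolic problem driven by $\cL$, obtain local well-posedness by a fixed-point argument on the Duhamel formula, extend solutions globally by propagation of regularity, and verify the invariance of $\cX_1$ by testing against $1$, $x_1$, $x_2$. This is the standard Gallay-Wayne methodology \cite{GW1,GW2}, adapted to the weighted space $X$.

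For the linear analysis I would first record the Mehler-type formula
\[
  (e^{t\cL}\omega)(x) \,=\, \frac{1}{4\pi a(t)}\int_{\R^2}\exp\Bigl(-\frac{|x-e^{-t/2}y|^2}{4a(t)}\Bigr)\omega(y)\dd y\,, \qquad a(t) \,=\, 1-e^{-t}\,,
\]
from which $(e^{t\cL})_{t\ge 0}$ is readily seen to be a strongly continuous semigroup on $X$: the Gaussian kernel dominates the exponential weight $\cA(x)\sim e^{|x|^2/4}/|x|^2$ at short times, and the rescaling $y\mapsto e^{-t/2}y$ is compatible with the scale structure of $\cA$ (cf.~Remark~\ref{Hardyscale}). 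One then extracts the parabolic smoothing estimate $\|\nabla e^{t\cL}\omega\|_X \lesssim (1\wedge t)^{-1/2}\|\omega\|_X$. Since $\cA^{-1}\in L^1(\R^2)$, Cauchy-Schwarz gives $X\hookrightarrow L^1\cap L^2$, and the classical Biot-Savart inequality yields $\|\tilde u\|_{L^\infty}\lesssim \|\tilde\omega\|_X$.

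Next, I would write \eqref{SSvort2} in mild form
\[
  \tilde\omega(t) \,=\, e^{t\cL}\tilde\omega_0 \,-\, \int_0^t e^{(t-s)\cL}\Bigl(\alpha\{\bar\psi,\tilde\omega\} + \alpha\{\tilde\psi,\bar\omega\} + \{\tilde\psi,\tilde\omega\}\Bigr)(s)\dd s\,,
\]
and run a Banach fixed-point argument in $C([0,T];X)$, where $T$ depends only on $\|\tilde\omega_0\|_X$: the derivative hidden in each Poisson bracket is absorbed by the integrable factor $(t-s)^{-1/2}$ coming from the smoothing, and the velocity prefactors are controlled in $L^\infty$ by the Biot-Savart bound. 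Global existence is then obtained by standard a priori estimates: $\|\tilde\omega(t)\|_{L^1}$ is preserved by the flow, and a Gronwall argument on $\|\tilde\omega(t)\|_X$, with prefactors controlled via Biot-Savart in lower norms, rules out finite-time blow-up in $X$.

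Finally, for the invariance of $\cX_1$, I would differentiate the three moment functionals $m_0=\int\tilde\omega\dd x$ and $m_j=\int x_j\tilde\omega\dd x$ ($j=1,2$) along the flow. A direct integration by parts gives $\int\cL\tilde\omega\dd x = 0$ and $\int x_j\cL\tilde\omega\dd x = -\tfrac12\,m_j$. The Poisson brackets tested against $1$ vanish by the divergence-free property of the velocities. For the moments $x_j$, one integration by parts rewrites each bracket contribution as $-\int u_j\,\omega\dd x$; the mixed linear terms thus produce $-\alpha\int\bar u_j\tilde\omega\dd x - \alpha\int\tilde u_j\bar\omega\dd x$, which cancel by the Biot-Savart antisymmetry $\int \tilde u_j\bar\omega\dd x = -\int\bar u_j\tilde\omega\dd x$ (obtained by swapping integration variables in the kernel representation), and the quadratic self-interaction term $-\int\tilde u_j\tilde\omega\dd x$ vanishes by the same antisymmetry. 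Consequently $(m_0,m_1,m_2)$ obeys the closed linear system $\dot m_0=0$, $\dot m_j=-\tfrac12 m_j$, so initial zero moments remain zero. The main technical subtlety is the compatibility of the exponential weight $\cA$ with both the Mehler rescaling and the nonlocal Biot-Savart operator, but the scale-covariant form of $\cA$ makes all the required estimates routine.
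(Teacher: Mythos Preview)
Your proposal is correct and follows exactly the Gallay--Wayne methodology that the paper itself invokes by reference; indeed you supply more detail than the paper's own proof, which simply cites \cite{GW1,Ga1,Ga2} and notes that the subgaussian weight $\cA$ requires only routine modifications. One small slip: the embedding $X\hookrightarrow L^1\cap L^2$ alone does not give $\|\tilde u\|_{L^\infty}\lesssim\|\tilde\omega\|_X$ via Biot--Savart (one needs an $L^p$ norm with $p>2$), so in the fixed-point argument you should either exploit the smoothing $e^{t\cL}:X\to L^\infty$ to gain the missing integrability for $t>0$, or run the contraction in a two-norm space as in \cite{GW1}---but this is a standard adjustment and does not affect the validity of your outline.
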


\begin{proof}
It is known that the vorticity equation \eqref{SSvort} or \eqref{SSvort2} 
is globally well-posed in various weighted $L^2$ spaces, see e.g. 
\cite{GW1,Ga1,Ga2}. The ``subgaussian'' weight $\cA$ is not explicitly 
considered in those references, but the arguments therein can be easily 
modified to cover that case too. If $\cA^{1/2}\tilde \omega \in L^2(\R^2)$, then 
all moments of $\tilde \omega$ are well defined, and a direct calculation
shows that the conditions \eqref{Momcond} are preserved under the 
evolution, so that \eqref{SSvort2} is globally well-posed in the 
subspace $\cX_1$. 
\end{proof}

Let $\tilde \omega_0 \in \cX_1$, and let $\tilde \omega \in C^0([0,+\infty),\cX_1)$ 
be the solution of \eqref{SSvort2} with initial data $\tilde \omega_0$. 
By parabolic regularization, we have $\tilde \omega(\cdot,t) \in Z_1 := 
Z \cap \cX_1$ for all $t > 0$, where $Z$ is the weighted Sobolev space
\begin{equation}\label{Zdef}
  Z \,=\, \Bigl\{\omega \in H^1(\R^2)\,;\, \cA^{1/2}\omega \in 
  L^2(\R^2)\,,~\cA^{1/2}\nabla\omega \in L^2(\R^2)\Bigr\}\,.
\end{equation}
For later use, we introduce the following quadratic form on $Z$: 
\begin{equation}\label{Qdef2}
  Q(\omega) \,=\, \int_{\R^2} \Bigl(\cA(x)|\nabla \omega(x)|^2 - 
  \cB(x)\omega(x)^2\Bigr)\dd x\,, \qquad \omega \in Z\,,
\end{equation}
where 
\begin{equation}\label{Bdef}
  \cB \,=\, 1 + \frac12\Bigl(\Delta \cA - \frac{x}{2}\cdot\nabla\cA + 
  \cA\Bigr) \,=\, 1 + \cA - \frac{x\cdot\nabla\cA}{|x|^2}\,.
\end{equation}
We shall verify in Section~\ref{ssecA3} that $\cA/2 \le \cB \le 2\cA$, 
so that the form $Q$ is well defined. 

The following coercivity result plays a crucial role in our argument. 

\begin{thm}\label{Qprop}
The quadratic form $Q$ defined by \eqref{Qdef2} is coercive on the subspace 
$Z_1 = Z \cap \cX_1$\: there exists a constant $\delta > 0$ such that
\begin{equation}\label{Qcoercive}
  Q(\omega) \,\ge\, \delta\int_{\R^2} \cA(x)\omega(x)^2\dd x\,,
  \qquad \hbox{for all }\omega \in Z_1\,.
\end{equation}
\end{thm}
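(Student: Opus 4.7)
The strategy is to substitute $\omega=\bar\omega v$, which recasts $Q$ as a weighted Schr\"odinger form in $v$; then to identify its non-positive eigendirections via a special pointwise identity tied to the Gaussian profile; and finally to close the gap between the spectral theorem and the constraints of $\cX_1$ by a compactness--contradiction argument.

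The identity $\div(\cA\nabla\bar\omega)=-\bar\omega$, which follows from $A=-\psi_*'/\omega_*'$ combined with $\Delta\bar\psi=\bar\omega$, enables an integration by parts yielding
\[
  Q(\bar\omega v) \,=\, \int_{\R^2}W|\nabla v|^2\dd x \,-\, \int_{\R^2}Vv^2\dd x,
  \qquad W:=\cA\bar\omega^2,\ \ V:=(\cB-1)\bar\omega^2,
\]
with $W,V>0$ bounded and $W(x)\sim 4|x|^{-2}e^{-|x|^2/4}$ at infinity. Under this substitution, the moment conditions defining $\cX_1$ become orthogonality of $v$ in $L^2(\bar\omega\dd x)$ to $\{1,x_1,x_2\}$. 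A direct polar-coordinate calculation establishes the pointwise identity $-\nabla W=Vx$, specific to the Gaussian weight; from it one deduces that $v=x_j$ ($j=1,2$) are weak eigenfunctions with eigenvalue $0$ of the selfadjoint operator $T:=-W^{-1}\div(W\nabla\cdot)-V/W$ on $L^2(W\dd x)$, while $Q(\bar\omega)=-\int V\dd x<0$ exhibits the single negative direction $v=1$. Since $W$ decays exponentially and $V/W$ is bounded, $T$ has compact resolvent on $L^2(W\dd x)$ (by the Fr\'echet--Kolmogorov argument used for $\widetilde B_1$ in Claim~1 of the proof of Theorem~\ref{Jprop1}), hence discrete spectrum. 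A Krein--Rutman argument parallel to Claim~2 of that proof---carried out mode by mode in an angular Fourier decomposition, reducing to Sturm--Liouville problems on $\R_+$---shows that the principal eigenvalue $\lambda_0<0$ is simple, attained on a positive radial eigenfunction $v_0$; that $0$ is doubly degenerate with eigenspace exactly $\mathrm{span}\{x_1,x_2\}$; and that the rest of the spectrum lies above some $\lambda_2>0$. For the angular modes $|k|\ge 2$ the repulsive $k^2/r^2$ term coupled with Hardy's inequality~\eqref{Hardy} gives positivity directly, so only the low modes $k=0,\pm 1$ require further argument.

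The main obstacle is that the constraints of $\cX_1$ impose $L^2(\bar\omega\dd x)$-orthogonality whereas the spectral decomposition delivers $L^2(W\dd x)$-orthogonality; the spectral gap therefore does not directly yield coercivity. I would bridge this by contradiction. Supposing \eqref{Qcoercive} fails, a minimising sequence $v_n$ with $\|v_n\|_{L^2(W)}=1$, satisfying the three constraints and $Q(\bar\omega v_n)\to 0$, possesses by compactness a strong $L^2(W\dd x)$-limit $v_*$ with the same data and $Q(\bar\omega v_*)\le 0$. Expanding $v_*=\alpha v_0+\beta_1 x_1+\beta_2 x_2+v_\perp$ in the $T$-eigenbasis, the energy bound $\lambda_0\alpha^2\|v_0\|_{L^2(W)}^2+\lambda_2\|v_\perp\|_{L^2(W)}^2\le 0$ combined with the three linear constraints
\[
  \alpha\!\int\!\bar\omega v_0\dd x \,=\,-\!\int\!\bar\omega v_\perp\dd x,\qquad
  \beta_j\!\int\! x_j^2\bar\omega\dd x \,=\,-\!\int\! x_j\bar\omega v_\perp\dd x,
\]
in which $\int\bar\omega x_j\dd x=0$ by odd symmetry and $\int x_i\bar\omega v_0\dd x=0$ by radiality of $v_0$, forces $\alpha=\beta_j=0$ and $v_\perp=0$---provided the spectral gap $\lambda_2$ dominates an explicit constant built from $|\lambda_0|$, $\|v_0\|_{L^2(W)}^2$ and $\int\bar\omega v_0\dd x$. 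Verifying this quantitative spectral-gap inequality for the Gaussian vortex---by estimating $|\lambda_0|$ from above via the test function $v=1$ and $\lambda_2$ from below via a weighted Poincar\'e inequality on $L^2(W\dd x)$---is the most delicate step of the argument; once it is in hand, the contradiction is immediate and $\delta=\min_k\delta_k>0$ completes the proof.
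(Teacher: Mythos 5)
Your reduction is sound and is in fact the same reduction the paper makes, in slightly different clothes: the substitution $\omega=\bar\omega v$ (using $\div(\cA\nabla\bar\omega)=-\bar\omega$) is unitarily equivalent to the paper's substitution $w_k=\cA^{1/2}\omega_k$, since $\int W v^2\dd x=\int\cA\omega^2\dd x$; your identity $-\nabla W=Vx$, which makes $x_1,x_2$ zero modes, is the statement $L_{\pm1}g_1=0$ in the paper; and your recognition that the real obstacle is the mismatch between the constraint hyperplane ($L^2(\bar\omega\dd x)$-orthogonality to $1$) and the spectral decomposition of $T$ ($L^2(W\dd x)$-orthogonality to the ground state $v_0$) is exactly what the paper's Lemma~\ref{LemCoercive} is designed to handle. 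The problem is that everything you defer to "the most delicate step" is where the entire proof lives, and the tools you propose for it do not work as stated. First, you propose to estimate $|\lambda_0|$ \emph{from above} "via the test function $v=1$"; a Rayleigh quotient bounds the ground-state eigenvalue from above, hence $|\lambda_0|$ from \emph{below} --- the wrong direction. The paper instead exhibits a positive supersolution of $L_0+\tfrac34$ to get $\lambda_0>-\tfrac34$. Second, you give no method at all for bounding $\int\bar\omega v_0\dd x$ from below, which requires quantitative knowledge of the unknown ground state $v_0$; the paper constructs an explicit quasimode $g=c\,e^{\chi}e^{-r^2/4}$, computes the exact $L^2$ norm of the residual $(L_0+\tfrac34)g$ (Appendix~A.4), and converts the spectral gap into an estimate $\|v_0/\|v_0\|-g\|\le 2\epsilon/d$ to control the overlap. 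Third, a genuine lower bound $\mu_1\ge 0.6$ on the second radial eigenvalue is needed and is obtained in the paper by an inf-sup argument with carefully tuned trial functions; "a weighted Poincar\'e inequality" must beat the potential $V/W\le 3/4$, which is not automatic for this weight.

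More seriously, the contradiction inequality you set up cannot close numerically for the Gaussian vortex. From $0\ge\lambda_0\alpha^2\|v_0\|^2+\lambda_2\|v_\perp\|^2$ and the crude Cauchy--Schwarz bound $|\alpha|\,|\!\int\bar\omega v_0|\le\|\cA^{-1}\|_{L^1}^{1/2}\|v_\perp\|$, your conclusion requires $\lambda_2\,|\langle\phi,\psi\rangle|^2>|\lambda_0|$ (with $\phi,\psi$ the normalized ground state and constraint vector in $L^2(W\dd x)$). But for Oseen's vortex one has $|\lambda_0|\approx 0.722>\lambda_2\approx 0.615$, so this condition fails even with $|\langle\phi,\psi\rangle|=1$. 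The missing refinement is to project $\psi$ onto $\phi^\perp$ before applying Cauchy--Schwarz, which replaces the requirement by $(|\lambda_0|+\lambda_2)\,|\langle\phi,\psi\rangle|^2>|\lambda_0|$; this \emph{does} hold ($\approx 1.31\times 0.76>0.75$), but only barely, which is why the paper needs the sharp form of Lemma~\ref{LemCoercive} together with all three quantitative inputs above. A smaller but real gap is your treatment of $|k|\ge 2$: the inviscid Hardy inequality \eqref{Hardy} is not the relevant tool here (the centrifugal term $k^2W/r^2$ does not dominate $\delta W$ at infinity by itself); the paper instead uses the pointwise bound $W(r)>r^2/16-3/2$ on the conjugated potential and compares $L_k$ with the harmonic oscillator to get $L_k\ge |k|/2-1$.
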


The proof of Theorem~\ref{Qprop} requires a careful analysis, which is postponed
to Section~\ref{ssec42} below. In particular, we shall see that the quadratic
form $Q$ is not not positive on the whole space $Z$, because it takes negative
values on a one-dimensional subspace made of radially symmetric functions. If we
restrict ourselves to functions with zero mean, the form $Q$ is nonnegative but
vanishes on a two-dimensional subspace due to translation invariance. Therefore,
all moment conditions \eqref{Momcond} are necessary to establish the coercivity
of $Q$.

Returning to the solution $\tilde \omega \in C^0([0,+\infty),\cX_1)$ of 
\eqref{SSvort2}, we define for all $t > 0$ the quantities
\begin{equation}\label{JQNdef}
  \begin{split}
  \tilde J(t) \,&=\, \frac{1}{2}\int_{\R^2} \Bigl(\cA(x)\tilde \omega(x,t)^2 + 
  \tilde\psi(x,t)\tilde\omega(x,t)\Bigr)\dd x \,=\, J(\tilde\omega(t))\,, \\
  \tilde Q(t) \,&=\, \int_{\R^2} \Bigl(\cA(x)|\nabla \tilde\omega(x,t)|^2 - 
  \cB(x)\tilde\omega(x,t)^2\Bigr)\dd x \,=\, Q(\tilde\omega(t))\,, \\
  \tilde N(t) \,&=\, \frac{1}{2}\int_{\R^2} \bigl\{\cA(x),\tilde \psi(x,t)\bigr\}
  \tilde \omega(x,t)^2\dd x \,=\, N(\tilde\omega(t))\,.
  \end{split}
\end{equation}
The key observation is: 

\begin{prop}\label{JQprop}
If $\tilde \omega \in C^0([0,+\infty),\cX_1)$ is a solution of \eqref{SSvort2}, the
quantities defined in \eqref{JQNdef} satisfy
\begin{equation}\label{JQNdot}
  \tilde J'(t) \,=\, - \tilde Q(t) - \tilde N(t)\,, \qquad \hbox{for all }
  t > 0\,.
\end{equation}
\end{prop}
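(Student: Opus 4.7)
The strategy is to differentiate $\tilde J$ in time, substitute the evolution equation \eqref{SSvort2} for $\partial_t \tilde \omega$, split the resulting expression into a bracket (Euler) part and a viscous part, and show that (i) the linear-in-$\alpha$ bracket terms cancel exactly thanks to Arnold's geometric structure, (ii) the cubic bracket term contributes precisely $-\tilde N(t)$, and (iii) the viscous term contributes $-\tilde Q(t)$ after repeated integration by parts. A preliminary observation simplifies things: since $\Delta\tilde\psi = \tilde\omega$, the bilinear form $(\omega_1,\omega_2)\mapsto \int\tilde\psi_1\,\omega_2\,dx$ is symmetric, hence $\int (\partial_t\tilde\psi)\tilde\omega\,dx = \int\tilde\psi\,\partial_t\tilde\omega\,dx$ and therefore
\[
  \tilde J'(t) \,=\, \int_{\R^2} \bigl(\cA\tilde\omega + \tilde\psi\bigr)\,
  \partial_t\tilde\omega\,dx\,.
\]
Substituting the equation, this splits into four pieces corresponding to the three brackets in \eqref{SSvort2} and to $\cL\tilde\omega$.

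\textbf{Bracket terms (Arnold cancellation).} The basic tool is the integration-by-parts identity $\int h\{f,g\}\,dx = -\int g\{f,h\}\,dx$, which follows from $\nabla\cdot\nabla^\perp f = 0$. Two special features of the base vortex enter: first, $\{\bar\psi,\cA\} = 0$ because both factors are radially symmetric, so $\nabla^\perp\bar\psi$ is tangential while $\nabla\cA$ is radial; second, the defining relation \eqref{Adef} yields $\nabla\bar\psi = -\cA\nabla\bar\omega$ pointwise, hence $\cA\{\tilde\psi,\bar\omega\} = \{\bar\psi,\tilde\psi\}$. Using these, I would check:
\begin{itemize}[leftmargin=*]
\item $\int \cA\tilde\omega\{\bar\psi,\tilde\omega\}\,dx = \int \cA\{\bar\psi,\tfrac12\tilde\omega^2\}\,dx = -\int \tfrac12\tilde\omega^2\{\bar\psi,\cA\}\,dx = 0$;
\item $\int \tilde\psi\{\bar\psi,\tilde\omega\}\,dx = -\int \tilde\omega\{\bar\psi,\tilde\psi\}\,dx$ and $\int \cA\tilde\omega\{\tilde\psi,\bar\omega\}\,dx = \int \tilde\omega\{\bar\psi,\tilde\psi\}\,dx$, so these two cross terms cancel;
\item $\int \tilde\psi\{\tilde\psi,\bar\omega\}\,dx = -\int \bar\omega\{\tilde\psi,\tilde\psi\}\,dx = 0$.
\end{itemize}
Hence the entire $\alpha$-dependent contribution vanishes. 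For the cubic bracket, $\int \tilde\psi\{\tilde\psi,\tilde\omega\}\,dx = 0$ by antisymmetry, while $\int \cA\tilde\omega\{\tilde\psi,\tilde\omega\}\,dx = \int \cA\{\tilde\psi,\tfrac12\tilde\omega^2\}\,dx = \tfrac12\int \{\cA,\tilde\psi\}\tilde\omega^2\,dx = \tilde N(t)$, and the sign of this term in the equation yields the $-\tilde N(t)$ on the right of \eqref{JQNdot}.

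\textbf{Viscous term.} It remains to prove that $\int(\cA\tilde\omega + \tilde\psi)\cL\tilde\omega\,dx = -\tilde Q(t)$, with $\cL = \Delta + \tfrac12 x\cdot\nabla + 1$. For the $\cA\tilde\omega$ factor, write each piece of $\cL$ separately: integration by parts gives $\int\cA\tilde\omega\Delta\tilde\omega\,dx = -\int \cA|\nabla\tilde\omega|^2\,dx + \tfrac12\int(\Delta\cA)\tilde\omega^2\,dx$, and the drift term $\tfrac12\int\cA\tilde\omega\,x\cdot\nabla\tilde\omega\,dx = -\tfrac14\int\tilde\omega^2(x\cdot\nabla\cA+2\cA)\,dx$. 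Collecting, the coefficient of $\tilde\omega^2$ from $\int\cA\tilde\omega\cL\tilde\omega\,dx$ comes out to $\tfrac12\Delta\cA -\tfrac14 x\cdot\nabla\cA + \tfrac12\cA = \cB - 1$ by the definition \eqref{Bdef} of $\cB$. For the $\tilde\psi$ factor, use $\Delta\tilde\psi = \tilde\omega$: the Laplacian piece gives $\int\tilde\omega^2\,dx$, and a two-step integration by parts shows $\tfrac12\int\tilde\psi\,x\cdot\nabla\tilde\omega\,dx = -\int\tilde\psi\tilde\omega\,dx$ (the remaining term $\int\tilde\omega\,x\cdot\nabla\tilde\psi\,dx$ vanishes, as one sees by substituting $\tilde\omega=\Delta\tilde\psi$ and integrating by parts once more). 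Thus $\int\tilde\psi\cL\tilde\omega\,dx = \int\tilde\omega^2\,dx$, and summing, $\int(\cA\tilde\omega + \tilde\psi)\cL\tilde\omega\,dx = -\int\cA|\nabla\tilde\omega|^2\,dx + \int\cB\tilde\omega^2\,dx = -\tilde Q(t)$.

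\textbf{Main obstacle.} The conceptual heart of the proof is the cancellation in the bracket part, which expresses that Arnold's quadratic form is a first integral of the linearized Euler dynamics about $\alpha\bar\omega$; getting the signs and the collinearity identity $\cA\{\tilde\psi,\bar\omega\} = \{\bar\psi,\tilde\psi\}$ organized correctly is where I would be most careful. The technical obstacle is justifying all the integrations by parts, and in particular the identity $\int(\partial_t\tilde\psi)\tilde\omega = \int\tilde\psi\,\partial_t\tilde\omega$, which requires sufficient spatial decay of $\tilde\omega$ and $\tilde\psi$. For $\tilde\omega(\cdot,t) \in Z_1$ (true for $t>0$ by parabolic smoothing, Lemma~\ref{Cauchylem}), the weight $\cA$ grows subgaussianly and $\tilde\omega$ belongs to $H^1$ with the corresponding weighted norm, which is ample to legitimize every manipulation above.
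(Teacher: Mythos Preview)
Your proof is correct and follows essentially the same route as the paper's: you differentiate $\tilde J$, substitute \eqref{SSvort2}, and then handle the linear-in-$\alpha$ bracket terms via the Arnold cancellation (using $\{\bar\psi,\cA\}=0$ and $\cA\nabla\bar\omega=-\nabla\bar\psi$), the cubic bracket term by moving $\cA$ across the Poisson bracket, and the viscous term by integration by parts so that the coefficient of $\tilde\omega^2$ collapses to $\cB-1$ and is compensated by $\int\tilde\psi\,\cL\tilde\omega=\int\tilde\omega^2$. The only cosmetic difference is that the paper writes $\cL\tilde\omega=\Delta\tilde\omega+\tfrac12\div(x\tilde\omega)$ and uses the divergence-form identity $-\int\Delta\tilde\psi\,(x\cdot\nabla\tilde\psi)=\tfrac12\int\div(x|\nabla\tilde\psi|^2)$ to show $\int\tilde\psi\,\div(x\tilde\omega)=0$, whereas you integrate by parts directly in $\int\tilde\psi\,x\cdot\nabla\tilde\omega$; both computations are equivalent.
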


\begin{proof}
Using the evolution equation \eqref{SSvort2}, we find
\begin{equation}\label{Jdot}
  \begin{split}
  \tilde J'(t) \,&=\, \int_{\R^2} \Bigl(\cA(x)\tilde \omega(x,t) + \tilde \psi(x,t)
  \Bigr)\partial_t \tilde \omega(x,t)\dd x \\
 \,&=\, \int_{\R^2} \Bigl(\cA\tilde \omega + \tilde \psi\Bigr)\Bigl(
  \cL \tilde \omega - \alpha \bigl\{\bar\psi,\tilde\omega\bigr\} - 
  \alpha \bigl\{\tilde\psi,\bar\omega\bigr\} - \bigl\{\tilde\psi,\tilde\omega\bigr\} 
  \Bigr)(x,t)\dd x\,.
  \end{split}
\end{equation}
We first consider the terms involving the diffusion operator $\cL$ in \eqref{Jdot}. 
We observe that
\begin{equation}\label{Jdot0}
  \int_{\R^2} \tilde \psi(x,t) \,\cL \tilde \omega(x,t)\dd x \,=\, 
  \int_{\R^2} \tilde \omega(x,t)^2 \dd x\,,
\end{equation}
because $\cL \tilde \omega = \Delta\tilde \omega + \frac12 \div(x\tilde \omega)$ 
and
\begin{align*}
  \int_{\R^2} \tilde \psi \,\Delta \tilde \omega \dd x \,&=\, 
  \int_{\R^2} \bigl(\Delta \tilde \psi\bigr) \,\tilde \omega \dd x \,=\, 
  \int_{\R^2} \tilde \omega^2 \dd x\,, \\
  \int_{\R^2} \tilde \psi\,\div(x\tilde \omega) \dd x \,&=\, 
  - \int_{\R^2} \bigl(\Delta\tilde \psi\bigr) \bigl(x \cdot \nabla\tilde\psi
  \bigr)\dd x \,=\, \frac{1}{2}\int_{\R^2} \div\bigl(x |\nabla\tilde\psi|^2\bigr)
  \dd x \,=\, 0\,.
\end{align*}
On the other hand, integrating by parts we obtain by direct calculation
\begin{equation}\label{Jdot1}
  \int_{\R^2} \cA(x)\tilde \omega(x,t)\,\cL \tilde \omega(x,t) \dd x
  \,=\, -Q(\tilde \omega(t)) - \int_{\R^2}\tilde \omega(x,t)^2\dd x\,.
\end{equation}

We next compute the advection terms in \eqref{Jdot}, which are proportional 
to $\alpha$. We claim that
\begin{equation}\label{Jdot2}
  I(\tilde \omega) \,:=\,  \int_{\R^2} \Bigl(\cA\tilde \omega + \tilde \psi\Bigr)\Bigl(
  \bigl\{\bar\psi,\tilde\omega\bigr\} + \bigl\{\tilde\psi,\bar\omega\bigr\}\Bigr)\dd x
  \,=\, 0\,.
\end{equation}
This identity is not surprising, as it means that the quadratic form $J$ is
invariant under the evolution defined by the linearized Euler equation at
$\bar\omega$, see \eqref{I7} for an analogue in the finite-dimensional case.
It can also be verified by direct calculations\: 
\begin{align*}
  &\int_{\R^2} \cA \tilde \omega \bigl\{\bar\psi,\tilde\omega\bigr\} \dd x \,=\, 
  \frac{1}{2}\int_{\R^2} \cA \bigl\{\bar\psi,\tilde\omega^2\bigr\} \dd x 
  \,=\, \frac{1}{2}\int_{\R^2} \bigl\{\cA,\bar\psi\bigr\} \tilde\omega^2\dd x 
  \,=\, 0\,, \\
  &\int_{\R^2} \tilde \psi\bigl\{\tilde\psi,\bar\omega\bigr\}\dd x \,=\,
  \int_{\R^2} \bigl\{\tilde \psi,\tilde\psi\bigr\}\bar\omega\dd x \,=\, 0\,, \\
  &\int_{\R^2} \Bigl(\cA\tilde \omega \bigl\{\tilde\psi,\bar\omega\bigr\}
  + \tilde \psi \bigl\{\bar\psi,\tilde\omega\bigr\}\Bigr)\dd x \,=\, 
  \int_{\R^2} \tilde\omega \Bigl(\cA \bigl\{\tilde\psi,\bar\omega \bigr\}
  + \bigl\{\tilde\psi,\bar\psi \bigr\}\Bigr)\dd x \,=\, 0\,.
 \end{align*}
Here we used the fact that $\bigl\{\cA,\bar\psi\bigr\} = 0$, because $\cA$ and 
$\bar\psi$ are radially symmetric. Moreover,  
\[
  \cA \bigl\{\tilde\psi,\bar\omega \bigr\} + \bigl\{\tilde\psi,\bar\psi \bigr\}
  \,=\, (\nabla \tilde\psi)^\perp \cdot \bigl(\cA \nabla\bar\omega + 
  \nabla\bar\psi\bigr) \,=\, 0\,,
\]
by the very definition of $\cA$. This proves \eqref{Jdot2}. 

Finally, integrating by parts the last term in \eqref{Jdot}, we find
\begin{equation}\label{Jdot3}
  N(\tilde \omega) \,:=\, \int_{\R^2} \Bigl(\cA\tilde \omega + \tilde \psi\Bigr)
  \bigl\{\tilde\psi,\tilde\omega\bigr\}\dd x \,=\, 
  \int_{\R^2} \cA\tilde \omega \bigl\{\tilde\psi,\tilde\omega\bigr\}\dd x \,=\,
  \frac{1}{2}\int_{\R^2} \bigl\{\cA,\tilde\psi\bigr\}\tilde \omega^2 \dd x\,.
\end{equation}
Combining \eqref{Jdot}--\eqref{Jdot3}, we obtain the desired result. 
\end{proof}

To control the nonlinear term $N(\tilde \omega)$, we use the following 
estimate. 

\begin{lem}\label{Nlem}
There exists a constant $C_0 > 0$ such that, for all $\tilde \omega 
\in Z$, the nonlinear term \eqref{Jdot3} satisfies
\begin{equation}\label{Nest}
  |N(\tilde \omega)| \,\le\, C_0\,\|\cA^{1/2}\tilde\omega\|_{L^2}^2 
  \Bigl(\|\cA^{1/2}\tilde\omega\|_{L^2} + \|\cA^{1/2}\nabla\tilde\omega\|_{L^2}
  \Bigr)\,.
\end{equation}
\end{lem}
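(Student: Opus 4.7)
The key idea is to rewrite $N(\tilde\omega)$ by integration by parts so that one factor of $\tilde\omega$ becomes $\nabla\tilde\omega$ and the radial velocity $u_r$ is replaced by the stream function $\tilde\psi$, which enjoys far better decay at infinity than $u_r$ thanks to the moment conditions defining $\cX_1$. Since $\cA$ is radially symmetric, a direct computation in polar coordinates $(r,\theta)$ gives $\{\cA,\tilde\psi\} = r^{-1}A'(r)\partial_\theta\tilde\psi$. Substituting into \eqref{Jdot3} and integrating by parts in $\theta$ (no boundary term by periodicity) yields
\[
N(\tilde\omega) \,=\, -\int_{\R^2}\frac{A'(|x|)}{|x|}\,\tilde\psi(x)\,\tilde\omega(x)\,\partial_\theta\tilde\omega(x)\dd x.
\]
From the explicit expression $\cA(r) = 4r^{-2}(e^{r^2/4}-1)$ one readily checks the pointwise bound $|A'(r)| \le C(1+r)\,\cA(r)$ for $r\ge 0$, while $|x|^{-1}|\partial_\theta\tilde\omega| \le |\nabla\tilde\omega|$ holds by the very definition of polar gradients. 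Combining these gives
\[
|N(\tilde\omega)| \,\le\, C\int_{\R^2}(1+|x|)\,\cA(x)\,|\tilde\psi(x)|\,|\tilde\omega(x)|\,|\nabla\tilde\omega(x)|\dd x.
\]

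\textbf{Reduction via Cauchy--Schwarz.} Regrouping the integrand as $\bigl[(1+|x|)\,|\tilde\psi(x)|\bigr]\bigl[\cA(x)^{1/2}|\tilde\omega(x)|\bigr]\bigl[\cA(x)^{1/2}|\nabla\tilde\omega(x)|\bigr]$ and applying Cauchy--Schwarz gives
\[
|N(\tilde\omega)| \,\le\, C\,\bigl\|(1+|\cdot|)\,\tilde\psi\bigr\|_{L^\infty(\R^2)}\,\|\cA^{1/2}\tilde\omega\|_{L^2}\,\|\cA^{1/2}\nabla\tilde\omega\|_{L^2}.
\]
This reduces the proof of \eqref{Nest} to establishing the weighted pointwise estimate
\[
\bigl\|(1+|\cdot|)\,\tilde\psi\bigr\|_{L^\infty(\R^2)} \,\le\, C\,\|\cA^{1/2}\tilde\omega\|_{L^2}\,,
\]
which, once proven, actually yields the stronger bound $|N(\tilde\omega)| \le C\,\|\cA^{1/2}\tilde\omega\|_{L^2}^2\,\|\cA^{1/2}\nabla\tilde\omega\|_{L^2}$, clearly implying~\eqref{Nest}.

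\textbf{Weighted Biot--Savart estimate.} For bounded $|x|$, the bound $|\tilde\psi(x)| \le C\|\cA^{1/2}\tilde\omega\|_{L^2}$ follows from standard local control on the logarithmic convolution $\tilde\psi(x) = (2\pi)^{-1}\int\log|x-y|\,\tilde\omega(y)\dd y$, using the local $L^2$-integrability of $\log$, the bound $\cA \ge c > 0$ (which converts $\|\cA^{1/2}\tilde\omega\|_{L^2}$ into control on $\|\tilde\omega\|_{L^2(B)}$ on any ball), and the polynomial lower bound $\cA(r) \ge C(1+r)^\beta$ to handle the logarithmic tail at infinity. For large $|x|$, the moment conditions $\int\tilde\omega\dd y = 0$ and $\int y_j\tilde\omega\dd y = 0$ (which hold because $\tilde\omega \in \cX_1$) allow Taylor expansion of the kernel at $y = 0$: the zeroth- and first-order terms cancel, so that on the region $\{|y|\le|x|/2\}$ one has $|\tilde\psi(x)| = O(|x|^{-2})\,\int|y|^2|\tilde\omega(y)|\dd y$. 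By Cauchy--Schwarz, $\int|y|^2|\tilde\omega|\dd y \le \bigl\||y|^2\cA^{-1/2}\bigr\|_{L^2(\R^2)}\,\|\cA^{1/2}\tilde\omega\|_{L^2}$, and the first factor is finite thanks to the Gaussian growth of $\cA$. The complementary region $\{|y|>|x|/2\}$ is estimated similarly, the rapid (Gaussian) decay of $\cA^{-1/2}$ there dominating the logarithmic singularity of the kernel. One concludes $(1+|x|)|\tilde\psi(x)| \le C\,|x|^{-1}\|\cA^{1/2}\tilde\omega\|_{L^2}$ in this regime.

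\textbf{Main obstacle.} The delicate point is precisely the weighted $L^\infty$ bound on $\tilde\psi$ above. The Biot--Savart kernel is borderline in dimension two, so without any moment condition $\tilde\psi$ has logarithmic growth; with only zero mean one gets $|\tilde\psi(x)| = O(|x|^{-1})$; and one genuinely needs both vanishing first moments (i.e., the full assumption $\tilde\omega \in \cX_1$) to reach the $|x|^{-2}$ decay that produces a bounded $(1+|x|)\tilde\psi$. The technical care lies in making the Taylor expansion uniform in $x$, and in splitting the convolution domain so that the Gaussian growth of $\cA$ absorbs all polynomial factors in $|y|$ arising from the remainder terms.
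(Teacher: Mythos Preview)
Your argument relies on moment conditions that are not part of the hypothesis. The lemma is stated for all $\tilde\omega \in Z$, where $Z$ is the weighted Sobolev space \eqref{Zdef} carrying no moment constraints; yet you explicitly invoke $\tilde\omega \in \cX_1$ to get the decay $|\tilde\psi(x)| = O(|x|^{-2})$ that your key bound $\|(1+|\cdot|)\tilde\psi\|_{L^\infty} < \infty$ requires. Without zero mean, $\tilde\psi$ grows logarithmically; with zero mean alone one only obtains $|\tilde\psi(x)| = O(|x|^{-1})$, still insufficient. So your proof does not establish the lemma as stated.

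The paper avoids this by \emph{not} integrating by parts. It keeps $\{\cA,\tilde\psi\}$ as is, bounds it pointwise by $C|x|\,\cA\,|\nabla\tilde\psi|$, and extracts $\bigl\||x|\,\nabla\tilde\psi\bigr\|_{L^\infty}$ rather than $\bigl\|(1+|x|)\tilde\psi\bigr\|_{L^\infty}$. The velocity $\nabla\tilde\psi$ decays like $M_0/(2\pi|x|)$ at infinity for \emph{any} $\tilde\omega$ with sufficient localization, so $|x|\,|\nabla\tilde\psi|$ is bounded without moment conditions. The paper then controls $\bigl\||x|\,\nabla\tilde\psi\bigr\|_{L^\infty}$ by a Biot--Savart estimate of the form $C\bigl(\|\langle x\rangle\tilde\omega\|_{L^{3/2}} + \|\langle x\rangle\tilde\omega\|_{L^3}\bigr)$, followed by H\"older and Sobolev to reach the $Z$-norms.

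Your route does give a correct proof of the weaker statement with $Z$ replaced by $Z_1 = Z\cap\cX_1$, and that restricted version is all that is actually used in Theorem~\ref{Oseenprop}; but it is not what the lemma claims. If you wish to keep your integration-by-parts approach, either restate the lemma for $Z_1$, or abandon the $L^\infty$ bound on $(1+|x|)\tilde\psi$ in favour of one on $|x|\,\nabla\tilde\psi$, which is what the paper does.
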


\begin{proof}
We have $|\{\cA,\tilde\psi\}| \le C |\nabla \cA|\,|\nabla \tilde\psi| \le 
C |x|\cA |\nabla \tilde \psi|$, hence
\[
  |N(\tilde \omega)| \,\le\, C\int_{\R^2} |x|\,|\nabla\tilde\psi| 
  \,\cA\,\tilde \omega^2 \dd x \,\le\, C\,\||x| |\nabla\tilde\psi|\|_{L^\infty}
  \,\|\cA^{1/2}\tilde\omega\|_{L^2}^2\,.
\]
On the other hand, using Proposition~B.1 in \cite{GW1}, H\"older's inequality and 
Sobolev's embedding theorem, we find
\[
  \||x| |\nabla\tilde\psi|\|_{L^\infty} \,\le\, C\bigl(\|\langle x\rangle \tilde 
  \omega\|_{L^{3/2}} + \|\langle x\rangle \tilde \omega\|_{L^3}\bigr) \,\le\, 
  C\bigl(\|\cA^{1/2}\tilde\omega\|_{L^2} + \|\cA^{1/2}\nabla\tilde\omega\|_{L^2}
  \bigr)\,.  
\]
Combining these estimates we arrive at \eqref{Nest}. 
\end{proof}

We are now able to state our final result: 

\begin{thm}\label{Oseenprop}
There exist positive constants $C_1$, $\epsilon_0$, and $\mu$ such that,
for any $\alpha \in \R$ and any $\tilde \omega_0 \in \cX_1$ satisfying 
$\|\tilde\omega_0\|_X \le \epsilon_0$, the solution of \eqref{SSvort2} with 
initial data $\tilde \omega_0$ satisfies
\begin{equation}\label{Oseenest}
  \|\tilde \omega(t)\|_X^2 \,\le\, C_1\,\|\tilde\omega_0\|_X^2
  \,e^{-\mu t}\,, \qquad \hbox{for all }t \ge 0\,. 
\end{equation}
\end{thm}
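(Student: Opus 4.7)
The plan is to use $\tilde J(t)=J(\tilde\omega(t))$ as a Lyapunov functional. The first step is to upgrade the coercivity statements of Section~\ref{sec2} to the full invariant subspace $\cX_1$ defined in \eqref{X2def}. Since the Gaussian profile $\omega_*(r) = e^{-r^2/4}$ satisfies Hypotheses~\ref{omegahyp} for every $\beta > 0$, Remark~\ref{X1rem} applies, so Theorem~\ref{Jprop1} gives $J \ge \gamma_1 \|\omega\|_X^2/2$ on $X_\rs^\perp \cap \cX_1$. On the complementary radial part $X_\rs \cap \cX_1 = X_\rs \cap X_0$, Theorem~\ref{Jprop2} together with Corollary~\ref{Hardycor} and Example~2 of Section~\ref{ssec22} yields $J \ge \gamma_2 \|\omega\|_X^2/2$. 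Because $J$ is additive under the orthogonal splitting $X = X_\rs \oplus X_\rs^\perp$, and any element of $\cX_1$ decomposes cleanly into these two pieces (its radial part automatically lies in $X_0$, its non-radial part automatically has vanishing first moments), we get a constant $\gamma>0$ with
\begin{equation*}
  \tfrac{\gamma}{2}\,\|\tilde\omega(t)\|_X^2 \;\le\; \tilde J(t) \;\le\; \tfrac{1}{2}\,\|\tilde\omega(t)\|_X^2\,,
\end{equation*}
where the upper bound uses $\int \tilde\psi\tilde\omega = -\int|\nabla\tilde\psi|^2 \le 0$.

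Next, I would exploit the dissipation identity $\tilde J'(t) = -\tilde Q(t) - \tilde N(t)$ from Proposition~\ref{JQprop}. Theorem~\ref{Qprop} gives $\tilde Q \ge \delta \|\tilde\omega\|_X^2$, and the inequality $\cB \le 2\cA$ (to be checked in Section~\ref{ssecA3}) combined with the definition \eqref{Qdef2} gives
\begin{equation*}
  \int_{\R^2} \cA |\nabla\tilde\omega|^2\,\D x \;=\; \tilde Q + \int_{\R^2}\cB\tilde\omega^2\,\D x \;\le\; \bigl(1+\tfrac{2}{\delta}\bigr)\tilde Q\,.
\end{equation*}
Hence $\tilde Q$ controls both $\|\tilde\omega\|_X^2$ and $\|\cA^{1/2}\nabla\tilde\omega\|_{L^2}^2$ — exactly the two quantities appearing on the right-hand side of Lemma~\ref{Nlem}. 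A direct application of Lemma~\ref{Nlem}, followed by Young's inequality, then yields a bound of the form $|\tilde N(t)| \le C \|\tilde\omega(t)\|_X\,\tilde Q(t)$ for some constant $C$ depending only on the geometry of the Gaussian vortex.

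The rest is a standard bootstrap. If $\|\tilde\omega(t)\|_X \le \eta$ with $C\eta \le 1/2$, then $|\tilde N(t)| \le \tilde Q(t)/2$, so
\begin{equation*}
  \tilde J'(t) \;\le\; -\tfrac12\,\tilde Q(t) \;\le\; -\tfrac{\delta}{2}\,\|\tilde\omega(t)\|_X^2 \;\le\; -\delta\,\tilde J(t)\,.
\end{equation*}
Integrating gives $\tilde J(t) \le \tilde J(0)\,e^{-\delta t}$, which via the equivalence of $\tilde J$ and $\|\cdot\|_X^2$ yields $\|\tilde\omega(t)\|_X^2 \le \gamma^{-1}\|\tilde\omega_0\|_X^2\,e^{-\delta t}$. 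In particular, $\|\tilde\omega(t)\|_X \le \gamma^{-1/2}\|\tilde\omega_0\|_X$. Choosing $\epsilon_0 = \sqrt{\gamma}\,\eta$, a standard continuity argument shows that the smallness condition $\|\tilde\omega(t)\|_X \le \eta$ persists for all $t \ge 0$, closing the bootstrap with $C_1 = 1/\gamma$ and $\mu = \delta$.

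The main obstacle is not any single step but rather ensuring that the quadratic form machinery matches the nonlinearity: the Lyapunov functional $\tilde J$ naturally carries only $L^2_\cA$ information, yet the advection term $\tilde N$ forces us to control $\|\cA^{1/2}\nabla\tilde\omega\|_{L^2}$. The fact that $\tilde Q$ — the coercivity form produced by the viscous dissipation — dominates precisely this gradient norm (thanks to $\cB \le 2\cA$ and Theorem~\ref{Qprop}) is the decisive structural coincidence mentioned in the introduction, and is what allows the absorption argument to close.
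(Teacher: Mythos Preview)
Your proof is correct and follows essentially the same Lyapunov strategy as the paper: equivalence of $\tilde J$ with $\|\cdot\|_X^2$ via Theorems~\ref{Jprop1}--\ref{Jprop2}, the dissipation identity of Proposition~\ref{JQprop}, coercivity of $\tilde Q$ from Theorem~\ref{Qprop}, and absorption of $\tilde N$ via Lemma~\ref{Nlem} under a smallness assumption closed by continuity. The only cosmetic difference is in the absorption step: the paper takes a convex combination of the two lower bounds on $\tilde Q$ to get $\tilde Q \ge \mu(m_0+m_1)$ and then applies Young's inequality to Lemma~\ref{Nlem}, whereas you bound $|\tilde N| \le C\|\tilde\omega\|_X\,\tilde Q$ directly and absorb in one stroke---arguably a little cleaner, but the same idea.
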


\begin{proof}
If $\tilde \omega \in C^0([0,+\infty),\cX_1)$ is the solution of \eqref{SSvort2} 
with initial data $\tilde \omega_0$, we define
\[
  m_0(t) \,=\, \|\tilde \omega(t)\|_X^2 \,=\, \|\cA^{1/2}\tilde\omega(t)\|_{L^2}^2
  \quad (t \ge 0)\,, \qquad
  m_1(t) \,=\, \|\cA^{1/2}\nabla\tilde\omega(t)\|_{L^2}^2 \quad (t > 0)\,.
\]
For the Gaussian vortex, we proved in Section~\ref{sec2} that Hardy's inequality 
\eqref{Hardy} holds for some $C_H < 1$. Thus, by Theorems~\ref{Jprop1} and 
\ref{Jprop2}, there exists a constant $\gamma \in (0,1)$ such that
\begin{equation}\label{Jbds}
  \frac{\gamma}{2}\,m_0(t) \,\le\, \tilde J(t) \,\le\, \frac{1}{2}\,m_0(t)\,, 
  \qquad t \ge 0\,. 
\end{equation}
On the other hand, by Theorem~\ref{Qprop}, there exists $\delta > 0$ such that
\begin{equation}\label{Qbds}
  \tilde Q(t) \,\ge\, \delta\,m_0(t)\,, \quad \hbox{and}\quad 
  \tilde Q(t) \,\ge\, m_1(t) - 2 m_0(t)\,, \qquad t > 0\,,  
\end{equation}
where the second inequality follows from the definition \eqref{Qdef2} and the 
inequality $\cB \le 2 \cA$. Taking a convex combination of both estimates
in \eqref{Qbds}, we deduce 
\begin{equation}\label{Qbds2}
  \tilde Q(t) \,\ge\, \mu \bigl(m_0(t) + m_1(t)\bigr)\,, \qquad t > 0\,, 
\end{equation}
where $\mu = \delta/(3+\delta)$. Finally, it follows from Lemma~\ref{Nlem} 
and Young's inequality that
\begin{equation}\label{Nbds}
  |\tilde N(t)| \,\le\, C_0 m_0(t)\bigl(m_0(t)^{1/2} + m_1(t)^{1/2}\bigr) \,\le\,
  \frac{\mu}{4}\bigl(m_0(t) + m_1(t)\bigr) + \frac{2 C_0^2}{\mu}
  \,m_0(t)^2\,.
\end{equation}

Now, as long as $m_0(t) \le \epsilon^2 := \mu^2/(8C_0^2)$, we have by 
\eqref{JQNdot}, \eqref{Jbds}, \eqref{Qbds2}, \eqref{Nbds}
\[
  \tilde J'(t) \,=\, -\tilde Q(t) - \tilde N(t) \,\le\, -\frac{\mu}{2}
  \bigl(m_1(t) + m_0(t)\bigr) \,\le\, -\mu \tilde J(t)\,, 
\]
which implies
\[
  \gamma m_0(t) \,\le\, 2\tilde J(t) \,\le\, 2 \tilde J(0) \,e^{-\mu t} \,\le\, 
  m_0(0)\, \,e^{-\mu t}\,.
\]
As a consequence, if we assume that $\|\tilde\omega_0\|_X^2 = m_0(0) \le \epsilon_0^2 
:= \gamma \epsilon^2$, we have $m_0(t) \le \epsilon^2$ for all $t \ge 0$ and 
estimate \eqref{Oseenest} holds with $C_1 = \gamma^{-1}$. 
\end{proof}

\begin{rem}\label{rem:nonlin}
Except for a slight difference in the definition of the function space 
$X$, Theorem~\ref{Oseenprop} recovers a well known stability result 
for the family of Oseen vortices, see e.g. \cite[Proposition~4.5]{Ga1}. 
The approach originally developed by C.E.~Wayne and the first author
relies on conserved quantities related to symmetries of the problem, 
such as the second order moment $I(\omega)$ in \eqref{MjIdef}. In 
many respects, it is simpler than ours, and it provides an estimate 
of the form \eqref{Oseenest} with explicit constants $C_1$ and $\mu$. 
Note also that, in the limit of large circulation numbers $|\alpha| \to 
\infty$, the enhanced dissipation effect due to fast rotation can be used 
to improve both the decay rate of the perturbations and the size of the basin 
of attraction of the vortex, see \cite{Ga2}. 
\end{rem}

\subsection{Coercivity of the diffusive quadratic form}\label{ssec42}

This section is entirely devoted to the proof of Theorem~\ref{Qprop}, which 
is a key ingredient in Theorem~\ref{Oseenprop}. We first observe that the 
functions $\cA(x),\cB(x)$ in \eqref{Qdef2} are both radially symmetric, 
with radial profiles $A(r), B(r)$ given by the explicit expressions
\begin{equation}\label{ABexp}
  A(r) \,=\, \frac{e^s-1}{s}\,, \qquad 
  B(r) \,=\, \frac{1}{2s^2}\Bigl(e^{s}(1+s)-1-2s\Bigr) + 1\,, \qquad
  s \,=\, \frac{r^2}{4}\,.
\end{equation}
On can also verify that $B/A$ is a decreasing function of $r$ satisfying
$1/2 \le B(r)/A(r) \le 7/4$ for all $r > 0$, see Section~\ref{ssecA3}. 

We next follow a similar approach as in Section~\ref{sec2}. If $\omega \in Z$ is
decomposed in Fourier series like in \eqref{omFourier}, we have
\begin{equation}\label{Qdef3}
  Q(\omega) \,=\, 2\pi \sum_{k \in \Z} \int_0^\infty \biggl\{
  A(r)\Bigl(|\omega_k'(r)|^2 + \frac{k^2}{r^2}\,|\omega_k(r)|^2\Bigr) 
  - B(r)|\omega_k(r)|^2 \biggr\}r\dd r\,,
\end{equation}
and we observe that $\omega \in Z_1$ if and only if
\[
  \int_0^\infty \omega_0(r)\,r\dd r \,=\, 0\,, \qquad 
  \hbox{and}\quad 
  \int_0^\infty \omega_{\pm 1}(r)\,r^2\dd r \,=\, 0\,. 
\]
Introducing the new variables $w_k = A^{1/2}\omega_k \equiv e^\chi \omega_k$, 
where $\chi = \frac12 \log(A)$, we obtain after straightforward calculations
\begin{equation}\label{Qdef4}
  Q(\omega) \,=\, 2\pi \sum_{k \in \Z} \int_0^\infty \Bigl\{
  |w_k'(r)|^2 + \frac{k^2}{r^2}\,|w_k(r)|^2 + W(r)|w_k|^2\Bigr\} r\dd r\,,
\end{equation}
where the potential $W$ is defined by
\begin{equation}\label{Wdef}
  W(r) \,=\, \chi''(r) + \frac{1}{r}\,\chi'(r) + \chi'(r)^2 - 
  \frac{B(r)}{A(r)}  \,=\, \frac{r}{2}\,\chi'(r) - \chi'(r)^2 
  - \frac{1}{2} - e^{-2\chi(r)}\,.
\end{equation}
The coercivity estimate \eqref{Qcoercive} is thus equivalent to the
inequality
\begin{equation}\label{kineq}
  \int_0^\infty \Bigl\{|w_k'(r)|^2 + \frac{k^2}{r^2}\,|w_k(r)|^2 + W(r)|w_k|^2
  \Bigr\} r\dd r \,\ge\, \delta \int_0^\infty |w_k(r)|^2\,r\dd r\,,
\end{equation}
which should hold for all $k \in \Z$ under the conditions
\begin{equation}\label{Z0cond}
  \int_0^\infty w_0(r)\,e^{-\chi(r)}\,r\dd r \,=\, 0\,, \qquad 
  \hbox{and}\quad 
  \int_0^\infty w_{\pm 1}(r)\,e^{-\chi(r)}\,r^2\dd r \,=\, 0\,. 
\end{equation}

For any $k \in \Z$, we denote by $L_k$ the selfadjoint operator in 
$Y = L^2(\R_+,r\dd r)$ defined by
\begin{equation}\label{Lkdef}
  L_k g \,=\, -\frac{1}{r}\partial_r\bigl(r\partial_r g\bigr) + 
  \frac{k^2}{r^2}\,g + W g\,.
\end{equation}
The domain of $L_k$ is exactly the same as for the harmonic oscillator 
in $\R^2$, because the potential $W$ defined by \eqref{Wdef} satisfies
\begin{equation}\label{Wprop}
  W(r) \,>\, \frac{r^2}{16} -\frac{3}{2} \quad \hbox{for all }r > 0\,,
  \quad \hbox{and}\quad
  W(r) \,\sim\, \begin{cases} -3/2 & \hbox{as } r \to 0\,,\\
  r^2/16 & \hbox{as } r \to \infty\,,
\end{cases}
\end{equation}
see Section~\ref{ssecA3}. Our goal is to prove the lower bound $L_k \ge \delta$ 
in the entire space $Y$ when $|k| \ge 2$, and in the subspaces given by 
conditions \eqref{Z0cond} when $k = 0$ or $k = \pm 1$. We consider 
three cases separately. 

\medskip\noindent{\bf Case 1\:} When $|k| \ge 2$, the desired inequality 
is simply obtained by comparing $L_k$ with the usual harmonic operator. 
Indeed, we know from \eqref{Lkdef}, \eqref{Wprop} that 
\begin{equation}\label{Lklow}
  L_k \,>\, -\partial_r^2 -\frac{1}{r}\,\partial_r + \frac{k^2}{r^2}
  +  \frac{r^2}{16} - \frac{3}{2} \,\ge\, \frac{|k|}{2} - 1\,,
\end{equation}
where inequalities are between selfadjoint operators on $Y$. Thus $L_k 
\ge 1/2$ when $|k| \ge 3$, and there exists $\delta > 0$ such that 
$L_k \ge \delta$ when $|k| = 2$, because the inequality in \eqref{Wprop}
is strict. 

\medskip\noindent{\bf Case 2\:} When $|k| = 1$, the lower bound
\eqref{Lklow} is of no use, but it is easy to verify that $L_k \ge 0$
in that case. Indeed, we claim that $L_k g_1 = 0$ where $g_1(r) = e^{\chi(r)}
\,r\,e^{-r^2/4}$. Since $g_1$ is a positive function vanishing at the origin 
and at infinity, this means that $0$ is the lowest eigenvalue of $L_k$ 
in $Y$ when $k = \pm 1$.  To prove the above claim, we first observe that, 
for any (smooth) function $f$ on $\R_+$, we have the identity
\begin{equation}\label{Lkchi}
  \wtL_k f \,:=\, e^\chi L_k (e^\chi f) \,=\, -\frac{1}{r}\partial_r\bigl(r 
  A\partial_r f\bigr) + \frac{k^2}{r^2}\,A f - Bf\,,
\end{equation}
because this is the property we used to go from \eqref{Qdef3} to 
\eqref{Qdef4}. On the other hand, in view of \eqref{psi*def} and
\eqref{Adef}, we have the identity
\begin{equation}\label{omstar}
   -\frac{1}{r}\partial_r\bigl(r A\partial_r \omega_*\bigr) \,=\, 
   \omega_*\,,
\end{equation}
which holds in fact for any vorticity profile $\omega_*$, by definition 
of $A$. In the case of the Lamb-Oseen vortex, if we differentiate 
\eqref{omstar} with respect to $r$, we find that the function 
$f = -2\omega_*' = r\,e^{-r^2/4}$ satisfies the relation
\begin{equation}\label{feq}
   -\frac{1}{r}\partial_r\bigl(r A\partial_r f\bigr) + \frac{1}{r^2}\,A f
   -\Bigl(A'' + \frac{2}{r}\,A' - \frac{r}{2}\,A'\Bigr)f \,=\, f\,.
\end{equation}
But $A'' + 2A'/r -rA'/2 = B -1$ by \eqref{Bdef}, so combining \eqref{Lkchi}
and \eqref{feq} we conclude that $\wtL_k f = 0$ if $|k| = 1$, 
which is the desired result.  

To get coercivity, we now restrict ourselves to the subspace $Y_1 \subset Y$ of
all functions $g$ satisfying $\langle g,h_1 \rangle = 0$ where
$h_1(r) = r\, e^{-\chi(r)}$, see the second relation in \eqref{Z0cond}.  It is
important to observe that $h_1$ is not proportional to $g_1$, so that $Y_1$ is
{\em not} the orthogonal complement in $Y$ of the eigenspace spanned by
$g_1$. However, we have $\langle g_1,h_1 \rangle = 8 \neq 0$, which means that
the closed hyperplane $Y_1$ does not contain the eigenfunction $g_1$.
In view of Remark~\ref{remCoercive} below, we conclude that there exists
some $\delta > 0$ such that $L_k \ge \delta$ on $Y_1$ when $|k| = 1$.

\medskip\noindent{\bf Case 3\:} Finally, we consider the radially
symmetric case where $k = 0$. The difficulty here is that the 
operator $L_0$ is not positive on the entire space $Y$. A numerical
calculation indicates that $L_0$ has one negative eigenvalue $\mu_0 \approx
-0.722$, and that the next eigenvalue $\mu_1 \approx 0.615$ is positive. 
So it is essential to use the first relation in \eqref{Z0cond}, 
and to restrict our analysis to the subspace $Y_0$ of all $g \in Y$ 
such that $\langle g,h_0 \rangle = 0$, where $h_0(r) = e^{-\chi(r)}$. 
Our strategy is to apply Lemma~\ref{LemCoercive} below with $a = 
-\mu_0$, $b = \mu_1$, $\psi = h_0/\|h_0\|$, and $\phi = g_0/\|g_0\|$, 
where $g_0$ denotes a positive function in the kernel of $L_0 - \mu_0$. 
Estimate \eqref{Lcoercive} can be used to prove coercivity of $L_0$ 
on $Y_0$ if we have good lower bounds on the eigenvalues $\mu_0, \mu_1$ 
and on the scalar product $|\langle\phi,\psi\rangle|$, which measures
the angle between the linear spaces spanned by $g_0$ and $h_0$. 

We first estimate the lowest eigenvalue $\mu_0$. We know from the previous 
step that $L_1 g_1 = 0$. Defining $g = c g_1/r = c e^\chi \,e^{-r^2/4}$, 
where $c = (2\log(2))^{-1/2}$ is a normalization constant chosen so that 
$\|g\| = 1$, we deduce that $L_0 g = (2/r)\partial_r g$. This gives the 
relation
\begin{equation}\label{quasimode}
  \Bigl(L_0 + \frac{3}{4}\Bigr)g \,=\, R\,, \qquad \hbox{where}
  \quad R \,=\, \frac{2}{r}\,\partial_r g + \frac{3}{4}\,g \,=\, 
  \Bigl(\frac{3}{4} -\frac{B-1}{A}\Bigr)g\,,
\end{equation}
where we used the identity $(B-1)/A = 1 - A'/(rA) = 1 - 2\chi'/r $, see
\eqref{Bdef}. In Section~\ref{ssecA3} below, we show that $B - 1 < 3A/4$, so
that $R > 0$. This means that the operator $L_0 + \frac{3}{4}$ admits a 
positive supersolution, and using Sturm-Liouville's theory we conclude that
$L_0 + \frac{3}{4} > 0$, so that $\mu_0 > -3/4$. Actually the function $g$ is a
remarkably accurate quasimode, in the sense that the remainder $R$ in
\eqref{quasimode} is small. The norm of $R$ in $Y = L^2(\R_+,r\dd r)$ can be
computed explicitly, see Section~\ref{ssecA4}.  The result is
\begin{equation}\label{normquasi}
  \int_0^\infty R(r)^2 r\dd r \,=\, \frac{1}{16\log(2)}\Bigl(3 - 
  \log(2) - 2\log(\pi)\Bigr)\,,
\end{equation}
so that $\epsilon := \|R\|_Y \approx 0.0396$. Since $L_0$ is 
a selfadjoint operator, we deduce that $L_0$ has an eigenvalue 
in the interval $[-3/4, -3/4+\epsilon]$. Anticipating the fact 
(established below) that $L_0$ has a unique negative eigenvalue, 
we conclude that  $\mu_0 \in [-3/4, -3/4+\epsilon]$.

We next estimate the second eigenvalue $\mu_1$ of $L_0$. It is convenient here
to observe that, if $g = e^{\chi}f$, the relation $L_0 g = \mu g$ is 
equivalent to the generalized eigenvalue problem $\wtL_0 f = \mu A f$, 
where $\wtL_k$ is defined in \eqref{Lkchi}. The second eigenvalue
of that problem is characterized by the inf-sup formula
\begin{equation}\label{infsup}
  \mu_1 \,=\, \inf_{f \in \cF}\,\sup_{r > 0}\,\bigl(\cR[f]\bigr)(r) \,=\, 
  \sup_{f \in \cF}\,\inf_{r > 0}\,\bigl(\cR[f]\bigr)(r)\,,
  \quad \hbox{where} \quad \cR[f] \,=\, \frac{\wtL_0 f}{A f}\,.
\end{equation}
Here $\cF$ denotes the class of all (smooth) functions $f : [0,+\infty) 
\to \R$ such that $f(0) = 1$, $f(r) \to 0$ as $r \to +\infty$, and 
$f$ has exactly one zero in the interval $(0,+\infty)$. Our 
first trial function is $f(r) = e^{-s}(1-\alpha s)$, where $s = r^2/4$ 
and $\alpha = \log(2)^{-1}$. The value of $\alpha$ is chosen so that
the Rayleigh quotient has no singularity\:
\[
  \cR[f] \,=\, \frac{e^{-s}(1+(2{-}\alpha)s+2\alpha s^2) 
  -(1+(1{-}\alpha)s+\alpha s^2)}{2s(1-e^{-s})(1-\alpha s)}\,, 
  \qquad s = \frac{r^2}{4}\,.
\]
It happens that $\cR[f]$ is a decreasing function on $\R_+$, 
with $\cR[f](0) = -3/4+\alpha$ and $\cR[f](+\infty) = 1/2$. 
In view of \eqref{infsup}, this implies that $1/2 < \mu_1 < 
-3/4 + \alpha \approx 0.69$. A better approximation is obtained
using the improved try
\[
  f(r) \,=\, e^{-s}(1-\alpha s)(1+\beta s)\,, \qquad \hbox{where}
  \quad \beta \,=\, \frac{\alpha(1-2\,e^{-1/\alpha})}{2\alpha - 1
  + 2\,e^{-1/\alpha}(1-\alpha)}\,.
\]
If $1/2 < \alpha < \log(2)^{-1}$, then $\beta > 0$ and the Rayleigh quotient has 
no singularity in the interval $(0,+\infty)$. Taking for instance $\alpha = 1.4$ 
gives the excellent lower bound $\mu_1 \ge 0.6$. 

Finally, we use the quasimode $g$ in \eqref{quasimode} and a standard
perturbation argument to estimate the true eigenfunction corresponding to 
the lowest eigenvalue $\mu_0$. We first look for a non-normalized eigenfunction 
of the form $g_0 = g - f$, where $f \perp g_0$. We have
\[
  0 \,=\, (L_0 -\mu_0)g_0 \,=\, (L_0 -\mu_0)g - (L_0 -\mu_0)f \,=\,
  R - \bigl(\mu_0 + {\TS \frac{3}{4}}\bigr)g - (L_0 -\mu_0)f\,,
\]
so that $f = (L_0-\mu_0)^{-1}\bigl(R - (\mu_0 + {\TS \frac{3}{4}})g\bigr)$, 
where $(L_0-\mu_0)^{-1}$ denotes the partial inverse of $L_0 - \mu_0$ on 
its range. The norm of that inverse is bounded by $1/d$, where $d = 
\mu_1 - \mu_0$ is the spectral gap. As $\|R\| = \epsilon$ and $|\mu_0 + 
{\TS \frac{3}{4}}| \le \epsilon$, we conclude that $\|f\| \le 2\epsilon/d$. 
The normalized eigenfunction is 
\[
  \phi \,=\, \frac{g_0}{\|g_0\|} \,=\, \frac{g - f}{\sqrt{1-\|f\|^2}}\,.
\]
Let $\psi = \hat{c} h_0 = \hat{c}\,e^{-\chi}$, where $\hat{c} = \sqrt{3}/\pi$ is a
normalization factor chosen so that $\|\psi\| = 1$. A direct calculation
shows that
\[
  \langle \psi,g\rangle \,=\, c\hat{c} \int_0^\infty e^{-r^2/4}\,r\dd r \,=\, 
  2c\hat{c} \,=\, \frac{1}{\pi}\,\sqrt{\frac{6}{\log(2)}} \,\approx\, 
  0.9365\,,
\]
hence
\begin{equation}\label{phipsi}
  \langle \psi,\phi\rangle \,=\, \frac{\langle \psi,g\rangle - \langle \psi,
  f\rangle}{\sqrt{1-\|f\|^2}} \,\ge\, 2c\hat{c} - \frac{2\epsilon}{d}\,.
\end{equation}
We use Lemma~\ref{LemCoercive} below with $a = -\mu_0 \le 3/4$, $d = 
a+b = \mu_1 - \mu_0 \ge 1.2$, and $\epsilon = \|R\| \le 0.04$. In view 
of \eqref{phipsi}, estimate \eqref{Lcoercive} shows that there exists 
some $\delta > 0$ such that $\langle Lf , f\rangle \ge \delta \|f\|^2$ 
for all $f \in Y_0 = \psi^\perp$. This concludes the proof of 
Theorem~\ref{Oseenprop}. \QED

Finally, we state an elementary lemma that was used twice in the 
above proof. 

\begin{lem}\label{LemCoercive}
Let $X$ be a Hilbert space and $L : D(L) \to X$ be a selfadjoint
operator in $X$. We assume that there exist $\phi \in D(L)$ with
$\|\phi\| = 1$ and $a,b \in \R$ with $a + b\ge 0$ such that\\[1mm]
i) $L\phi = -a \phi$, and \\[1mm]
ii) $\langle Lg , g\rangle \ge b \|g\|^2$ for all $g \in D(L)$ with 
$g \perp \phi$. \\[1mm]
Then, for any $\psi \in X$ with $\|\psi\| = 1$, we have the lower bound
\begin{equation}\label{Lcoercive}
  \langle Lf , f\rangle \ge \Bigl((a+b)|\langle \phi,\psi\rangle|^2 
  - a\Bigr)\|f\|^2\,, \quad \hbox{for all } f \in D(L) \hbox{ with }
  f \perp \psi\,.
\end{equation}
\end{lem}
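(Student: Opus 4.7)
The plan is a direct spectral decomposition argument. Write any $f\in D(L)$ with $f\perp \psi$ as $f=\alpha\phi+g$ where $\alpha=\langle f,\phi\rangle$ and $g\perp\phi$. Since $L\phi=-a\phi$ and $L$ is selfadjoint, we get
\[
  \langle Lf,f\rangle \,=\, -a|\alpha|^2 + \langle Lg,g\rangle \,\ge\, -a|\alpha|^2 + b\|g\|^2\,,
\]
using assumption (ii). Writing $\|g\|^2=\|f\|^2-|\alpha|^2$, this becomes $b\|f\|^2-(a+b)|\alpha|^2$, so the whole task reduces to showing that the component $|\alpha|^2$ of $f$ along the principal eigenfunction is appropriately small when $f\perp\psi$.

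For that, I would decompose $\psi=\langle\psi,\phi\rangle\phi+\psi^\perp$ with $\psi^\perp\perp\phi$ and $\|\psi^\perp\|^2=1-|\langle\phi,\psi\rangle|^2$. The orthogonality $\langle f,\psi\rangle=0$ then reads
\[
  \overline{\langle\psi,\phi\rangle}\,\alpha \,=\, -\langle g,\psi^\perp\rangle\,,
\]
and Cauchy--Schwarz gives $|\langle\phi,\psi\rangle|^2\,|\alpha|^2\le(1-|\langle\phi,\psi\rangle|^2)\|g\|^2$. Setting $c=|\langle\phi,\psi\rangle|$, this rearranges to the clean bound $|\alpha|^2\le(1-c^2)\|f\|^2$.

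Plugging back into the spectral inequality yields
\[
  \langle Lf,f\rangle \,\ge\, b\|f\|^2 - (a+b)(1-c^2)\|f\|^2 \,=\, \bigl((a+b)c^2 - a\bigr)\|f\|^2\,,
\]
which is exactly \eqref{Lcoercive}. There is no real obstacle here; the only thing to check carefully is the sign bookkeeping in the orthogonal decomposition of $\psi$ and the use of $a+b\ge0$ (which guarantees that assumption (ii) is consistent with (i), since setting $g=0$ would otherwise fail, and which ensures the coefficient $(a+b)$ has the right sign for the Cauchy--Schwarz step to give a useful estimate). The whole argument is a few lines and purely Hilbert-space in nature, independent of the specific operator $L_0$ to which the lemma is applied in Section~\ref{ssec42}.
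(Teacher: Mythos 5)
Your proof is correct and follows essentially the same two-step argument as the paper: first the decomposition $f=\langle f,\phi\rangle\phi+g$ together with (i), (ii) to get $\langle Lf,f\rangle\ge b\|f\|^2-(a+b)|\langle f,\phi\rangle|^2$, then a Cauchy--Schwarz estimate exploiting $f\perp\psi$ to obtain $|\langle f,\phi\rangle|^2\le(1-|\langle\phi,\psi\rangle|^2)\|f\|^2$. The only cosmetic difference is that you decompose $\psi$ along $\phi$ and use the orthogonality relation algebraically, whereas the paper decomposes $\phi=\langle\phi,\psi\rangle\psi+h$ and applies Cauchy--Schwarz to $\langle f,h\rangle$ directly; the resulting bound is identical.
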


\begin{proof}
Given $f \in D(L)$, we decompose $f = \langle f,\phi\rangle\phi + g$, 
so that $g \perp \phi$. Since $L\phi = - a \phi$, we find
\[
   \langle Lf , f\rangle \,=\,  \langle Lg , g\rangle - a |\langle f,\phi 
   \rangle|^2 \,\ge\, b\|g\|^2 - a |\langle f,\phi \rangle|^2 \,=\, 
   b\|f\|^2 - (a+b)|\langle f,\phi \rangle|^2\,,
\]
where the inequality follows from ii). We now assume that $f \perp \psi$ 
and decompose $\phi = \langle \phi,\psi\rangle\psi + h$. By Cauchy-Schwarz,
we have
\[
  |\langle f , \phi\rangle|^2 \,=\, |\langle f , h\rangle|^2 \,\le\, 
  \|f\|^2 \,\|h\|^2 \,=\, \|f\|^2 \bigl(1 - |\langle \phi,\psi\rangle|^2
  \bigr)\,,
\]
and combining both inequalities we arrive at \eqref{Lcoercive}. 
\end{proof}

\begin{rem}\label{remCoercive}
In the particular case where $a = 0$ and $b > 0$, the kernel of $L$ is 
one-dimensional, and inequality \eqref{Lcoercive} implies that the 
quadratic form of $L$ is strictly positive on any closed hyperplane 
that does not contain the eigenfunction $\phi$.
\end{rem}

\appendix

\section{Appendix}\label{app}

\subsection{Integral inequalities involving logarithmic weights}
\label{ssecA1}

\begin{proof}[\bf Proof of Proposition \ref{prop:logf}]
Let $B_1 = \{x \in \R^n\,;\, |x| < 1\}$ and $D_M = \bigl\{x \in \R^n\,;\, f(x) 
< M\bigr\}$. To prove \eqref{logf1}, we must verify that
\begin{equation}\label{logf3} 
  \int_{B_1} \Bigl(\log\frac{1}{|x|}\Bigr) \,f(x)\dd x ~\lesssim~ M \,+\, 
  \int_{\R^n \setminus D_M} \Bigl(\log\frac{f(x)}{M}\Bigr)\,f(x)\dd x\,.
\end{equation}
Let $\Omega_1 = \bigl\{x \in B_1\,;\, f(x) \le M|x|^{-n/2}\bigr\}$ and 
$\Omega_2 = \bigl\{x \in B_1\,;\, f(x) > M|x|^{-n/2}\bigr\} \subset 
\R^n\setminus D_M$. We have $B_1 = \Omega_1 \cup \Omega_2$ and
\begin{align*}
  \int_{\Omega_1} \Bigl(\log\frac{1}{|x|}\Bigr) \,f(x)\dd x \,&\le\, M \int_{B_1}
  \frac{1}{|x|^{n/2}}\,\log\frac{1}{|x|}\dd x \,=\, CM\,, \\
  \int_{\Omega_2} \Bigl(\log\frac{1}{|x|}\Bigr) \,f(x)\dd x \,&\le\, \frac{2}{n}
  \int_{\Omega_2}\Bigl(\log\frac{f(x)}{M}\Bigr)\,f(x)\dd x \,\le\, \frac{2}{n}
  \int_{\R^n\setminus D_M} \Bigl(\log\frac{f(x)}{M}\Bigr)\,f(x)\dd x\,,
\end{align*}
hence \eqref{logf3} follows by adding both inequalities. We next consider 
\eqref{logf2}, which reads
\begin{equation}\label{logf4} 
  \int_{D_M} \Bigl(\log\frac{M}{f(x)}\Bigr)\,f(x)\dd x ~\lesssim~ M \,+\, 
  \int_{\R^n\setminus B_1} \bigl(\log|x|\bigr)\,f(x)\dd x\,.
\end{equation}
Let $e = \exp(1)$ and 
\[
  \Omega_3 \,=\, \Bigl\{x \in D_M\,;\, f(x) \le \frac{M}{e(1{+}|x|)^{2n}}\Bigr\}\,,
  \qquad
  \Omega_4 \,=\, \Bigl\{x \in D_M\,;\, f(x) > \frac{M}{e(1{+}|x|)^{2n}}\Bigr\}\,.
\]
Since $t \mapsto t \log(1/t)$ is increasing on $[0,e^{-1}]$ and $s \mapsto 
\log(s)$ is increasing on $[1,+\infty)$, we have
\begin{align*}
  \int_{\Omega_3} \Bigl(\log\frac{M}{f(x)}\Bigr)\,f(x)\dd x \,&\le\, M 
  \int_{\R^n} \frac{1}{e(1{+}|x|)^{2n}} \log\bigl(e(1{+}|x|)^{2n}\bigr)\dd x
  \,=\, C M\,, \\
  \int_{\Omega_4} \Bigl(\log\frac{M}{f(x)}\Bigr)\,f(x)\dd x \,&\le\,  
  \int_{\Omega_4} \log\bigl(e(1{+}|x|)^{2n}\bigr)\,f(x)\dd x \,\le\, CM + 
  2n \int_{\R^n\setminus B_1} \bigl(\log|x|\bigr)\,f(x)\dd x\,,
\end{align*}
and \eqref{logf4} follows in the same way. 

From now on, we assume that $f$ is radially symmetric and nonincreasing in the 
radial direction. In particular, we have for all $x \neq 0$:
\begin{equation}\label{fradsym}
  f(x) \,\le\, \frac{1}{\alpha_n |x|^n}\int_{|y| \le |x|}f(y)\dd y \,\le\, 
  \frac{M}{\alpha_n |x|^n}\,, \qquad \hbox{where}
  \quad \alpha_n \,=\, \frac{\pi^{n/2}}{\Gamma(1 + \frac{n}{2})}\,.
\end{equation}
Since $t \mapsto \log_+(t)$ is increasing, we deduce that
\[
  \int_{\R^n\setminus D_M} \Bigl(\log\frac{f(x)}{M}\Bigr) \,f(x)\dd x \,\le\, 
  \int_{\R^n} \Bigl(\log_+\frac{1}{\alpha_n |x|^n}\Bigr) \,f(x)\dd x \,\le\, 
  C M + n \int_{B_1} \Bigl(\log\frac{1}{|x|}\Bigr) \,f(x)\dd x\,,
\]
which is the converse of \eqref{logf1}. Note that, when $n \le 12$, the 
first term $CM$ in the right-hand side can be dropped, because $\alpha_n
> 1$. In a similar way, we find
\[
  \int_{\R^n\setminus B_1} \bigl(\log|x|\bigr)\,f(x)\dd x \,\le\, 
  \frac{1}{n}\int_{\R^n} \Bigl(\log_+\frac{M}{\alpha_n f(x)}\Bigr)\,f(x)\dd x
  \,\le\, CM + \int_{D_M} \Bigl(\log\frac{M}{f(x)}\Bigr)\,f(x)\dd x\,,
\]
which is the converse of \eqref{logf2}. Again, the first term $CM$ 
in the right-hand side can be dropped when $n \le 12$. 
\end{proof}

\begin{proof}[\bf Proof of Proposition~\ref{prop:Edef}]
Throughout the proof we assume that $M := \|\omega\|_{L^1} > 0$.  We
decompose $E(\omega) = E_1(\omega) + E_2(\omega)$ according to
\[
  E_i(\omega) \,=\, \frac{1}{4\pi}\int_{\Omega_i} \log\frac{1}{|x-y|}
  \,\omega(x)\omega(y)\dd x\dd y\,, \qquad i = 1,2\,,
\]
where $\Omega_1 = \{(x,y) \in \R^2 \times \R^2\,;\,|x-y| < 1\}$ and 
$\Omega_2 = \{(x,y) \in \R^2 \times \R^2\,;\,|x-y| \ge 1\}$. We have to 
verify that the integrals defining the quantities $E_1, E_2$ are convergent 
under assumptions \eqref{omcond}. 

First of all, using inequality \eqref{logf3} above with $n = 2$, we obtain for 
all $x \in \R^2$: 
\[
  \int_{|y-x| < 1} \log\frac{1}{|x-y|}\,|\omega(y)|\dd y \,\le\, 
  C\int_{\R^2} \Bigl(1 + \log_+ \frac{|\omega(y)|}{M}\Bigr)
  |\omega(y)|\dd y\,.
\]
If we multiply both sides by $|\omega(x)|$ and integrate over $x \in \R^2$, we thus 
find
\begin{equation}\label{Ebd1}
 |E_1(\omega)| \,\le\, C M \int_{\R^2} \Bigl(1 + \log_+ \frac{|\omega(y)|}{M}\Bigr)
  |\omega(y)|\dd y\,.  
\end{equation}
On the other hand, we have $\log|x-y| \le \log(|x| + |y|) \le \log(1+|x|)
+\log(1+|y|)$ when $|x-y| \ge 1$. This gives the bound
\begin{equation}\label{Ebd2}
\begin{split}
  |E_2(\omega)| \,&\le\, \frac{1}{4\pi}\int_{\Omega_2} |\omega(x)|\,|\omega(y)| 
  \,\Bigl(\log(1+|x|) + \log(1+|y|)\Bigr)\dd x\dd y \\
  \,&\le\, \frac{M}{2\pi} \int_{\R^2} \,|\omega(y)|\log(1+|y|)\dd y\,.
\end{split}
\end{equation}
Combining \eqref{Ebd1} and \eqref{Ebd2}, we arrive at \eqref{EEbound}.

Finally, we assume that $\omega \in C^1_c(\R^2)$ and $\int_{\R^2} \omega(x)\dd x = 0$. 
The associated stream function $\psi \in C^2(\R)$ defined by \eqref{psidef} satisfies  
$|\psi(x)| = \cO(|x|^{-1})$ and $|u(x)| = |\nabla \psi(x)| = \cO(|x|^{-2})$ as 
$|x| \to \infty$, so that $u \in L^2(\R^2)$. This allows us to integrate by parts
in the first expression \eqref{Edef} of the energy, using the relation $\Delta\psi 
= \omega$, to obtain the elegant formula $E(\omega) = \frac12 \int_{\R^2} |u|^2\dd x$. 
By a density argument, the conclusion remains valid for all integrable vorticities
with zero average satisfying a assumptions \eqref{omcond}.
\end{proof}

\subsection{Positivity of the potential $V$ in some examples}\label{ssecA2}

For the algebraic vortex \eqref{algvort} with $\kappa = 1 + \nu > 1$, 
the potential $V$ defined in \eqref{Vdef} has the following expression
\[
  V(r) \,=\, \frac{1}{r^2(1{+}r^2)^2}\Bigl(3 - 2(\nu{-}1)r^2 + (\nu^2{-}1)r^4
  - 2S - S^2\Bigr)\,, \quad \hbox{where}\quad S \,=\, \frac{\nu r^2}{
  (1+r^2)^\nu - 1}\,.
\]
If $\nu = 1$, then $S = 1$ hence $V \equiv 0$. Otherwise\:

\smallskip\noindent$\bullet$ If $\nu > 1$, we have $(1+r^2)^\nu > 1 + \nu r^2$ 
for all $r > 0$, so that $S < 1$. We deduce 
\begin{equation}\label{Vlow1}
  V(r) \,>\, \frac{\nu-1}{(1{+}r^2)^2}\Bigl(-2 + (\nu{+}1)r^2\Bigr)\,,
\end{equation}
so that $V(r) > 0$ if $r^2 \ge 2/(\nu{+}1)$. In the region where $r^2 
\le 2/(\nu{+}1)$, we use the improved estimate
\begin{equation}\label{Slow}
  S \,=\, \frac{\nu r^2}{(1+r^2)^\nu - 1} \,<\, 1 - \frac{\nu{-}1}{2}\,r^2 
  + \frac{\nu^2{-}1}{12}\,r^4\,, \qquad r > 0\,,
\end{equation}
which can be established by a direct calculation. This gives the lower bound
\begin{equation}\label{Vlow2}
  V(r) \,>\, \frac{(\nu{-}1)r^2}{12(1{+}r^2)^2}\Bigl(5 \nu + 11 + (\nu^2{-}1)r^2
  - \frac{(\nu{-}1)(\nu{+}1)^2}{12}\,r^4\Bigr)\,,
\end{equation}
which implies that $V(r) > 0$ if $(\nu{+}1)r^2 \le 2$. 

\smallskip\noindent$\bullet$ If $0 < \nu < 1$, the calculations are 
entirely similar, except that all inequalities in \eqref{Vlow1}--\eqref{Vlow2}
are reversed. This shows that $V(r) < 0$ in that case. 

\medskip For the Gaussian vortex \eqref{gaussvort}, a direct calculation shows
that
\[
  V(r) \,=\, \frac{3}{4s} - \frac{1}{2} + \frac{s}{4} - \frac{1/2}{e^s - 1} - 
  \frac{s/4}{(e^s -1)^2}\,, \qquad \hbox{where}\quad s \,=\, \frac{r^2}{4}\,. 
\]
Using the lower bound $e^s - 1 \ge s(1+s/2+s^2/6)$, we obtain
\begin{align*}
  V(r) \,&\ge\, \frac{1}{4s}\,\frac{1}{(1{+}s/2{+}s^2/6)^2}\Bigl(
  (3-2s+s^2)(1+s/2+s^2/6)^2 - 2 (1+s/2+s^2/6) - 1\Bigr) \\
  \,&=\, \frac{1}{4}\,\frac{s}{(6{+}3s{+}s^2)^2}
  \Bigl(15 + 12 s + 12 s^2 + 4s^3 + s^4\Bigr) \,>\, 0\,.
\end{align*}

\subsection{Properties of the Gaussian vortex}\label{ssecA3}

Given the expressions of $A,B$ in \eqref{ABexp}, we first verify 
that the ratio $B/A$ is a decreasing function of $r$. We have
\begin{equation}\label{ABaux}
  \frac{B(r)-1}{A(r)} \,=\, \frac12\Bigl(1 + h(r^2/4)\Bigr)\,,
  \qquad \hbox{where}\quad h(s) \,=\, \frac{1}{s} - \frac{1}{e^s - 1}\,.
\end{equation}
Since
\[
  h'(s) \,=\, - \frac{(e^s-1)^2 - s^2e^s}{s^2(e^s-1)^2}
  \,=\, - 4\,e^s\,\frac{\sinh(s/2)^2 - (s/2)^2}{s^2(e^s-1)^2} \,<\, 0\,,
  \qquad s > 0\,,
\]
we see that $h$ is strictly decreasing on $(0,+\infty)$ with 
$h(0) = 1/2$ and $h(+\infty) = 0$. We conclude that $(B-1)/A$, 
hence also $B/A$, is a decreasing function of $r$, and that 
$1/2 \le B/A \le 7/4$. 

We next prove the lower bound \eqref{Wprop} on the potential $W$. 
Since $\chi = \log(A)/2$ with $A$ as in \eqref{gaussvort}, 
a direct calculation shows that the potential $W$ defined by 
\eqref{Wdef} has the following expression
\[
  W(r) \,=\, \frac{s}{4} - \frac{1}{2} - \frac{1}{4s} - \frac{s-1/2}{e^s - 1} - 
  \frac{s/4}{(e^s -1)^2}\,, \qquad \hbox{where}\quad s \,=\, \frac{r^2}{4}\,.
\]
Inequality \eqref{Wprop} is thus equivalent to the positivity of the 
function $G$ defined by
\begin{equation}\label{Gexp}
  G(s) \,=\, 1 - \frac{1}{4s} - \frac{s-1/2}{e^s - 1} - 
  \frac{s/4}{(e^s -1)^2}\,, \qquad s > 0\,.
\end{equation}
If $s \ge 1/2$, we use the lower bound $e^s - 1 \ge s(1+s/2)$ and obtain
\[
  G(s) \,\ge\, \frac{s}{4(2+s)^2}\,\Bigl(7 + 4s\Bigr) \,>\, 0\,.
\]
If $0 < s < 1/2$, the third term in the right-hand side of \eqref{Gexp} 
has the opposite sign. To estimate the denominator, we use the 
upper bound $e^s - 1 \le s(1+s/2)(1+s^2/5)$, which holds for $s \le 1/2$. 
This gives
\[
  G(s) \,\ge\, \frac{s}{4(2+s)^2(5+s^2)}\,\Bigl(27 + 32s + 15s^2 + 4s^3\Bigr) 
  \,>\, 0\,.
\]

\subsection{Computing the norm of the quasimode \eqref{quasimode}}
\label{ssecA4}

In this section we compute the $L^2$ norm of the function $R$ defined 
by \eqref{quasimode}. We recall that $g = c A^{1/2}e^{-r^2/4}$, where 
$c = (2\log(2))^{-1/2}$, and using \eqref{ABaux} we observe that
\[
  \frac{3}{4} -\frac{B-1}{A} \,=\, \frac{1}{4}\,\Bigl(1 - 2h(r^2/4)\Bigr)\,, 
  \qquad \hbox{where} \quad h(s) \,=\, \frac{1}{s} - \frac{1}{e^s -1}\,.  
\]
It follows that
\[
  \|R\|_Y^2 \,=\, \frac{1}{16}\,\int_0^\infty \Bigl(1 - 2h(r^2/4)\Bigr)^2 
  g(r)^2 r\dd r \,=\, \frac{1}{16\log(2)}\,\int_0^\infty \Bigl(1 - 2h(s)\Bigr)^2 
  \,\frac{1}{s}\,\Bigl(e^{-s} - e^{-2s}\Bigr)\dd s\,.
\]
Expanding $(1-2h(s))^2 = 1 - 4h(s) + 4h(s)^2$, we decompose 
\begin{equation}\label{Rexpand}
 \|R\|_Y^2 \,\equiv\, \int_0^\infty R(r)^2 r\dd r \,=\, \frac{I_1 - 4 I_2 + 4 I_3}{
  16\log(2)}\,,
\end{equation}
where the integrals $I_1$, $I_2$, $I_3$ are defined and computed below. 

\smallskip\noindent$\bullet$ Evaluation of $I_1$\:
\[
  I_1 \,=\, \int_0^\infty \frac{1}{s}\,\Bigl(e^{-s} - 
  e^{-2s}\Bigr)\dd s \,=\, \log(2)\,.
\]

\smallskip\noindent$\bullet$ Evaluation of $I_2$\:
\begin{align*}
  I_2 \,&=\, \int_0^\infty \frac{h(s)}{s}\,\Bigl(e^{-s} - e^{-2s}\Bigr)\dd s \\ 
  \,&=\, \int_0^\infty \Bigl(\frac{1}{s} - \frac{1}{e^s -1}\Bigr) 
  \biggl\{\int_0^\infty e^{-st}\dd t\biggr\}\Bigl(e^{-s} - e^{-2s}\Bigr)\dd s \\
  \,&=\, \int_0^\infty \biggl\{\int_0^\infty\Bigl(\frac{1}{s} - \frac{1}{e^s -1}\Bigr)
  \Bigl(e^{-s(1+t)} - e^{-s(2+t)}\Bigr)\dd s\biggr\}\dd t \\
  \,&=\, \int_0^\infty \Bigl(\log\frac{2+t}{1+t} - \frac{1}{2+t}\Bigr)\dd t
  \,=\, (1+t)\log\frac{2+t}{1+t}\,\bigg|_{t=0}^{t=+\infty} \,=\, 1-\log(2)\,.
\end{align*}

\smallskip\noindent$\bullet$ Evaluation of $I_3$\:
\begin{align*}
  I_3 \,&=\, \int_0^\infty \frac{h(s)^2}{s}\,\Bigl(e^{-s} - e^{-2s}\Bigr)\dd s \\
  \,&=\, \int_0^\infty \Bigl(\frac{1}{s} - \frac{1}{e^s -1}\Bigr)^2
  \biggl\{\int_0^\infty ts\,e^{-st}\dd t\biggr\}\Bigl(e^{-s} - e^{-2s}\Bigr) \dd s \\
  \,&=\, \int_0^\infty \biggl\{\int_0^\infty\biggl(\frac{e^{-s(1+t)} - e^{-s(2+t)}}{s}
  - 2\,e^{-s(2+t)} + \frac{s\,e^{-s(2+t)}}{e^s-1}\biggr)\dd s\biggr\}\,t \dd t \\
  \,&=\, \int_0^\infty \Bigl(\log\frac{2+t}{1+t} - \frac{2}{2+t} + \psi^{(1)}(3+t)
  \Bigr)\,t \dd t\,,
\end{align*}
where $\psi^{(1)}$ denotes the trigamma function \cite[Section~6.4]{AS}\:
\[
  \psi^{(1)}(z) \,=\, \int_0^\infty \frac{s\,e^{-sz}}{1 - e^{-s}}\dd s \,=\, 
  \frac{\D^2}{\D z^2}\,\log \Gamma(z)\,, \qquad \Re(z) > 0\,.
\]
It follows that
\begin{align*}
  I_3 \,&=\, \frac{t^2+4}{2}\,\log(2+t) - \frac{t^2-1}{2}\,\log(1+t) - 
  \frac{3t}{2} + t\bigl(\log \Gamma\bigr)'(3+t) - \bigl(\log \Gamma\bigr)(3+t)  
  \,\bigg|_{t=0}^{t=+\infty} \\
  \,&=\, \frac{1}{4}\Bigl(7 - 6 \log(2) - 2\log(\pi)\Bigr)\,.
\end{align*}
Here we used Stirling's formula to compute an asymptotic expansion of 
$\bigl(\log \Gamma\bigr)(3+t)$ and its derivative as $t \to +\infty$. 
Inserting the values of $I_1$, $I_2$, $I_3$ into \eqref{Rexpand}, we 
arrive at \eqref{normquasi}. 

\subsection{The Poisson structure on $\cP$}\label{ssecA5}

For two functions $\phi,\psi$ on $\R^2$ we use the familiar notation
$\{\phi,\psi\} = \partial_1\phi\partial_2\psi-\partial_2\psi\partial_2\phi$.
Now, if $\cF$ and $\cG$ are two functionals of $\cP$, the standard way of
defining their Poisson bracket is
\begin{equation}\label{PB}
  \bm{\{}\cF,\cG\bm{\}}(\om) = -\irt \om \left\{\frac{\delta\cF}{\delta\om},
  \frac{\delta \cG}{\delta\om}\right\}\dd x\,,
\end{equation}
where $\frac{\delta\cF}{\delta\om}$ is the usual ``variational derivative'' of
$\cF$, namely, the function on $\R^2$ defined by the relation
\[
  \Bigl(\frac{\dd}{\dd\ve}\cF(\om+\ve\eta)\Bigr)\Big|_{\ve=0} \,=\, \irt 
  \frac{\delta\cF}{\delta\om}(x) \eta(x)\dd x\,,
\]
for all (smooth and compactly supported) increments $\eta$. In particular, 
the variational derivative of the energy functional \eqref{Edef} is 
$\frac{\delta E}{\delta\om} = -\psi$, where $\psi$ is the stream function
\eqref{psidef}, and Euler's equation $\partial_t\omega + \{\psi,\omega\} = 0$ 
can therefore be written in the ``canonical'' form $\partial_t\omega = \bm{\{}
E,\omega\bm{\}}$.

\bigskip\noindent
{\bf Thierry Gallay}\\ 
Institut Fourier, Universit\'e Grenoble Alpes, 100 rue des Maths, 38610 Gi\`eres, 
France\\
Email\: {\tt Thierry.Gallay@univ-grenoble-alpes.fr}

\bigskip\noindent
{\bf Vladim\'ir \v{S}ver\'ak}\\
School of Mathematics, University of Minnesota\\
127 Vincent Hall, 206 Church St.\thinspace SE, Minneapolis, MN 55455, USA\\
Email\: {\tt sverak@math.umn.edu}


\begin{thebibliography}{99}
\setlength{\itemsep}{-0.4mm}

\vspace{-0.2cm}

\bibitem{AS} M. Abramowitz and I. Stegun,  
{\em Handbook of Mathematical Functions with Formulas, Graphs, and 
Mathematical Tables}, Dover, 1964. 

\bibitem{Arn1} V. I. Arnold, Conditions for nonlinear stability of 
stationary plane curvilinear flows of an ideal fluid, 
Dokl. Acad. Nauk SSSR {\bf 162} (1965), 975--978. 

\bibitem{Arn2} V. I. Arnold, Sur un principe variationnel pour les \'ecoulements 
stationnaires des liquides parfaits et ses applications aux probl\`emes de 
stabilit\'e non lin\'eaire, J. de M\'ecanique {\bf 5} (1966), 29--43.

\bibitem{Arn3} V. I. Arnold, Sur la g\'eom\'etrie diff\'erentielle des groupes 
de Lie de dimension infinie et ses applications \`a l'hydrodynamique des fluides
parfaits, Ann. Inst. Fourier {\bf 16} (1966), 319--361.

\bibitem{Arnold-Khesin} V.~I.~Arnold,  B.~A.~Khesin; {\em Topological methods 
in hydrodynamics}, Second edition. Applied Mathematical Sciences, 125, Springer

\bibitem{Burton} 
G.~R.~Burton, Global nonlinear stability for steady ideal fluid flow in
bounded planar domains. Arch. Ration. Mech. Anal. 176 (2005), no. 2, 149--163.

\bibitem{Carlen-Loss}
E.~Carlen, M.~Loss, Competing symmetries, the logarithmic HLS inequality and 
Onofri's inequality on $S^n$. Geom. Funct. Anal. 2 (1992), no. 1, 90--104.

\bibitem{CL} E. A. Coddington and N. Levinson,
{\em Theory of Ordinary Differential Equations}, McGraw-Hill, 1955. 

\bibitem{Dei} K. Deimling, {\em Nonlinear Functional Analysis}, Dover, 1985. 

\bibitem{Ga1} Th. Gallay,
Stability and interaction of vortices in two-dimensional viscous flows", 
Discr. Cont. Dyn. Systems Ser. S {\bf 5} (2012), 1091--1131. 

\bibitem{Ga2} Th. Gallay, 
Enhanced dissipation and axisymmetrization of two-dimensional viscous vortices, 
Arch. Rational Mech. Anal. {\bf 230} (2018), 939--975. 

\bibitem{GW1} Th. Gallay and C. E. Wayne,
Invariant manifolds and the long-time asymptotics of the
Navier-Stokes and vorticity equations on {$\R^2$},
Arch. Ration. Mech. Anal. {\bf 163} (2002), 209--258. 

\bibitem{GW2}
Th. Gallay and C.E. Wayne,
Global stability of vortex solutions of the two-dimensional 
Navier-Stokes equation, Commun. Math. Phys. {\bf 255} 
(2005), 97--129.

\bibitem{Ha} P. Hartman,
{\em Ordinary Differential Equations}, John Wiley \& Sons, New York, 1964. 

\bibitem{Ka} T. Kato,
{\em Perturbation Theory for Linear Operators}, Grundlehren der mathematischen
Wissenschaften {\bf 132}, Springer, 1966. 

\bibitem{LL} E. Lieb and M. Loss,
{\em Analysis}, Graduate Studies in Mathematics {\bf 14},
AMS, Providence, 1997. 

\bibitem{Lions} P.-L. Lions, 
The concentration-compactness principle in the calculus of variations. The 
locally compact case, part 1, Annales de l’IHP section C {\bf 1} (1984), 109--145. 

\bibitem{MP} C. Marchioro and M. Pulvirenti, 
{\em Mathematical Theory of Incompressible Nonviscous Fluids}, 
Applied Mathematical Sciences {\bf 96}, Springer, 1994.

\bibitem{Maz} V. Maz'ya, {\em Sobolev spaces}, 
Grundlehren der mathematischen Wissenschaften {\bf 342},
Springer, 2011. 

\bibitem{Onofri}
E.~Onofri, On the positivity of the effective action in a theory of random surfaces. 
Comm. Math. Phys. 86 (1982), no. 3, 321--326.

\bibitem{Onsager}
L.~Onsager, Statistical hydrodynamics, Nuovo Cimento (9) 6 (1949), Supplemento, 
no. 2 (Convegno Internazionale di Meccanica Statistica), 279--287. 

\bibitem{RS4} M. Reed and B. Simon, {\em Methods of Modern Mathematical 
Physics IV: Analysis of Operators}, Academic Press, 1978. 

\end{thebibliography}
\end{document}